\newtheorem{thm}{Theorem}[section]
\newtheorem{prop}[thm]{Proposition}
\newtheorem{lem}[thm]{Lemma}
\newtheorem{lemma}[thm]{Lemma}
\theoremstyle{remark}
\newtheorem{rem}[thm]{Remark}
\newcommand{\sumstar}{\sideset{}{^*}\sum}
\newcommand{\sumb}{\sideset{}{^\flat}\sum}
\newcommand{\es}[1]{\begin{equation}\begin{split}#1\end{split}\end{equation}}
\newcommand{\est}[1]{\begin{equation*}\begin{split}#1\end{split}\end{equation*}}
\newcommand{\tRe}{\textup{Re }}
\newcommand{\tIm}{\textup{Im }}
\newcommand{\Arg}{\textup{Arg }}
\newcommand{\bfrac}[2]{\left(\frac{#1}{#2}\right)}
\newcommand{\chiq}{\chi \textup{ (mod $q$)}}
\newcommand{\cb}{\overline{\chi}}
\newcommand{\V}{V}
\newcommand{\W}{\mathcal W}
\newcommand{\D}{\mathcal D (\Psi, Q)}
\newcommand{\Smo}{\mathcal{S}_1 (\Psi, Q)}
\newcommand{\Smtwo}{\mathcal{S}_2 (\Psi, Q)}
\newcommand{\Mg}{\mathcal{MG}(\Psi, Q)}
\newcommand{\Eg}{\mathcal{EG}(\Psi, Q)}
\newcommand{\G}{\mathcal G (\Psi, Q)}
\newcommand{\Hc}{\mathcal H}
\newcommand{\F}{\mathcal F}
\newcommand{\R}{{\mathrm{Re}}}
\newcommand{\h}{\frac{1}{2}} 
\newcommand{\lon}{Q_0}
\newcommand{\lotwo}{Q_0^2}
\newcommand{\phib}{\phi^\flat(q)}
\newcommand{\dt}{\widetilde{\Delta}}
\newcommand{\Wt}{\widetilde{\mathcal{W}}}
\newcommand{\psit}{\widetilde{\Psi}}
\begin{document}
\title{The eighth moment of Dirichlet $L$-functions {II} }
\author{Vorrapan Chandee}
\address{Mathematics Department, Kansas State University \\ 138 Cardwell Hall Manhattan, KS 66506 USA}

\email{chandee@ksu.edu}

\author{Xiannan Li}
\address{Mathematics Department, Kansas State University \\ 138 Cardwell Hall Manhattan, KS 66506 USA}
\email{xiannan@ksu.edu}

\author{Kaisa Matom\"aki}
\address{Department of Mathematics and Statistics \\ University of Turku \\ 20014 Turku \\ Finland}
\email{ksmato@utu.fi}

\author{Maksym Radziwi{\l\l}}
\address{UT Austin, Department of Mathematics, 2515 Speedway, PMA 8.100, Austin, TX 78712}
\email{maksym.radziwill@gmail.com}

\subjclass[2010]{11M06}

\begin{abstract} 
We prove an asymptotic formula for the eighth moment of Dirichlet $L$-functions averaged over primitive characters $\chi$ modulo $q$, over all moduli $q\leq Q$ and with a short average on the critical line. Previously the same result was shown conditionally on the Generalized Riemann Hypothesis by the first two authors.
\end{abstract}

\maketitle
\section{Introduction} 
Moments of $L$-functions have attracted a great deal of attention. Not only do they have numerous applications, but they have also their own intrinsic interest. The first moments studied were naturally those of the Riemann zeta function, which are averages of the form
$$I_k(T) := \int_0^T |\zeta(\tfrac{1}{2} + it)|^{2k}dt.
$$  
An asymptotic formula for $I_k(T)$ was proven for $k = 1$ by Hardy and Littlewood and for $k=2$ by Ingham (see e.g. \cite[Chapter VII]{Ti}). Despite considerable effort, such an asymptotic formula is still not known for any other value of $k$.  

The situation for other $L$-functions is very similar; asymptotics are only available for small values of $k$, and often only when averaged over a suitable family. In case of Dirichlet $L$-functions, Conrey, Iwaniec and Soundararajan \cite{CIS} have proven an asymptotic formula for the sixth moment with an averaging over characters $\chiq$, over all moduli $q \leq Q$ and with a short average on the critical line. The first two authors~\cite{CL8th} have shown a similar asymptotic formula for the eighth moment, conditionally on the GRH. The aim of this paper is to provide an unconditional proof of this eighth moment result. 

Before stating our result, let us introduce some notation. Let $\chiq$ be a primitive even\footnote{The restriction to even characters is only for convenience so that the completed $L$-function has the same shape for all characters involved --- odd characters could be treated in exactly the same way.} Dirichlet character, and let (for $\tRe s > 1$),
$$ L(s, \chi) := \sum_{n=1}^\infty \frac{\chi(n)}{n^s} = \prod_p \left( 1 - \frac{\chi(p)}{p^s}\right)^{-1} $$ 
be the Dirichlet $L$-function associated to it.  Then the completed $L$-function 
\[
\Lambda\big( \tfrac{1}{2} + s, \chi \big) := \left(\frac{q}{\pi}\right)^{s/2} \Gamma \left(\tfrac{1}{4} + \tfrac{s}{2}\right) L\big( \tfrac{1}{2} + s, \chi\big) 
\]
satisfies the functional equation
\begin{equation}
\label{eq:FE}
\Lambda\big( \tfrac{1}{2} + s, \chi \big) = \varepsilon_\chi \Lambda\big( \tfrac{1}{2} - s, \overline{\chi}\big),
\end{equation}
where $|\varepsilon_\chi| = 1$.  

Let $\sumb_{\chiq}$ denote a sum over primitive even Dirichlet characters with modulus $q$, and $\phib$ denote the number of primitive even Dirichlet characters with modulus $q$. From \cite{CFKRS}, one may derive the conjecture that as $q \rightarrow \infty$ with $q \neq 2$ (mod 4),
$$ \frac{1}{\phib} \sumb_{\chiq} \big|L\big(\tfrac{1}{2}, \chi \big)\big|^8 \sim 24024 \ a_4 \prod_{p | q} \frac{\left( 1 - \frac{1}{p}\right)^7}{\left( 1 + \frac{9}{p} + \frac{9}{p^2} + \frac{1}{p^3}\right)} \frac{(\log q)^{16}}{16!},$$
where 
\[
a_4 := \prod_{p} \left( 1 - \frac{1}{p}\right)^9 \left(1 + \frac{9}{p} + \frac{9}{p^2} + \frac{1}{p^3} \right).
\]

Toward this conjecture we prove the following asymptotic formula when there is an additional $q$-average as well as a $t$-average which is very short thanks to the rapid decay of the $\Gamma$-function. Exactly the same theorem was shown in~\cite{CL8th} conditionally on the Generalized Riemann Hypothesis.
\begin{thm}\label{thm:eightmoment} Let $\varepsilon > 0$ and let $\Psi$ be a smooth function compactly supported in $[1, 2]$. Then
\begin{equation}
\label{eq:ThmClaim}
\begin{split}
& \sum_{q } \Psi\left(\frac{q}{Q}\right) \sumb_{\chiq} \int_{-\infty}^{\infty} \left| \Lambda\big( \tfrac{1}{2} + it, \chi \big)\right|^8 \> dt \\
&= 24024 a_4\sum_{q}  \Psi\left(\frac{q}{Q}\right) \prod_{p|q} \frac{\left( 1 - \frac{1}{p}\right)^7}{\left( 1 + \frac{9}{p} + \frac{9}{p^2} + \frac{1}{p^3}\right)} \phib \frac{(\log q)^{16}}{16!} \int_{-\infty}^{\infty} \left| \Gamma \left( \frac{1}{4} + \frac{it}{2}\right)\right|^8 \> dt \\
& \hskip 1in + O_\varepsilon(Q^2(\log Q)^{15+ \varepsilon}).
\end{split}
\end{equation}
\end{thm}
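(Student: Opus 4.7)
The strategy is to follow the broad architecture of \cite{CL8th}, where the same asymptotic was obtained conditionally on GRH, and to replace the step invoking GRH by an unconditional argument.

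Stage 1. I would apply an approximate functional equation to $L(s,\chi)^4 L(1-s,\bar\chi)^4$ using \eqref{eq:FE} to represent $|\Lambda(\tfrac{1}{2}+it,\chi)|^8$ as a smoothly truncated double Dirichlet sum
\[
\sum_{m,n}\frac{d_4(m)\,d_4(n)}{\sqrt{mn}}\,\chi(m)\bar\chi(n)\left(\frac{m}{n}\right)^{-it}V_t\!\left(\frac{mn}{q^4}\right) + \text{conj.},
\]
where $V_t$ localizes $mn \lesssim q^4(1+|t|)^4$. The rapid decay of $\Gamma(\tfrac{1}{4}+\tfrac{it}{2})^8$ essentially confines the $t$-integral to $|t|\ll 1$, so the effective length in $mn$ is $\asymp q^4$.

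Stage 2. I would then execute the character and modulus averages. Orthogonality of primitive even characters splits the result into a diagonal part ($m=n$) and an off-diagonal part ($m \equiv \pm n \pmod q$ with $m \neq n$), with standard M\"obius bookkeeping for the imprimitivity correction. The diagonal reduces to $\sum_n d_4(n)^2/n$ smoothly truncated at $n \asymp q^2$. Since $\sum_n d_4(n)^2 n^{-s} = \zeta(s)^{16}\,U(s)$ for a convergent Euler product $U$ holomorphic near $s=1$, shifting past the order-$16$ pole produces the predicted main term with the constant $24024\,a_4$, the local factor $\prod_{p\mid q}(\cdots)$, and the polynomial $(\log q)^{16}/16!$ after summing against $\Psi(q/Q)$.

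Stage 3. The off-diagonal is the heart of the argument. I would apply Poisson summation modulo $q$ to one of the variables so that the congruence is replaced by a dual sum weighted by Kloosterman-type phases; averaging against $\Psi(q/Q)$ then converts the resulting bilinear form into sums of (incomplete) Kloosterman sums, which would be treated by Weil's bound where possible and otherwise by spectral estimates of Deshouillers--Iwaniec type combined with a shifted-convolution estimate for the $d_4\ast d_4$ coefficients.

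The main obstacle, and the genuinely new ingredient of the paper, is precisely this off-diagonal step. In \cite{CL8th}, GRH is invoked to control a hybrid mean value of Dirichlet $L$-functions on the critical line that arises at the boundary between the ``short'' and ``long'' ranges after Poisson. Replacing it unconditionally requires assembling a combination of (i) a hybrid large-sieve bound for $\sum_{q \leq Q}\sumb_{\chiq}\bigl|\sum_m \alpha_m \chi(m)\bigr|^2$ with $\alpha_m$ essentially a $d_4$-convolution of length $\asymp q^2$, and (ii) spectral bounds for sums of Kloosterman sums weighted by divisor coefficients. The target error $O(Q^2(\log Q)^{15+\varepsilon})$ is only one logarithm smaller than the main term of size $\asymp Q^2(\log Q)^{16}$, so essentially no slack is available: every logarithmic loss in the large sieve and spectral steps must be tracked with care, and this accounting is where I expect the bulk of the technical work to lie.
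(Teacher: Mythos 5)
Your Stages 1 and 2 match the paper's setup (approximate functional equation with the $t$-average confining things to $mn \lesssim q^4$, orthogonality over primitive even characters, diagonal main term from $\zeta(s)^{16}$). But Stage 3 — the off-diagonal — is not what the paper does, and as proposed it would fail. Applying Poisson summation modulo $q$ to $\sum_{m\equiv n (q)} \tau_4(m)\tau_4(n)$ with $m,n \asymp q^2$ leads to the shifted convolution problem for $\tau_4\ast\tau_4$ (equivalently, correlations $\sum_n \tau_4(n)\tau_4(n+hq)$ with power saving), which is open; Deshouillers--Iwaniec spectral machinery handles $\tau_2$, not $\tau_4$, and Weil's bound alone cannot beat the trivial bound here. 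This is precisely why neither \cite{CIS} nor \cite{CL8th} goes that route. You have also misplaced where GRH enters in \cite{CL8th}: it is used only for an upper bound on the hybrid eighth moment $\sum_{\chi (h)}\int|L(\tfrac12+it,\chi)|^8\,dt$ needed to control an error term, not at a Poisson boundary.

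What the paper actually does with the off-diagonal is the complementary divisor trick, which is enabled by the average over moduli $q\le Q$ (this average is essential and your plan does not exploit it). After truncating to $m,n \le N = Q^2/\exp((\log Q)^{\varepsilon_0})$ via the large sieve (Proposition~\ref{prop:truncation}), one writes $|m-n| = hq$ and replaces the congruence mod $q$ by one mod $h$ with $h \ll N/Q < Q$, so the arithmetic conductor drops. Detecting the new congruence by characters mod $abh$, the principal characters give (together with the $d>D$, $a>A$ leftovers) the main term via Lemmas~\ref{lem:MSplusMG} and Proposition~\ref{prop:mainMG}, while the non-principal characters are bounded by combining the Mellin-transform decay of Lemma~\ref{lem:MellinXY} with the \emph{unconditional} hybrid bound of Proposition~\ref{prop:8momentIntandSumoverq},
\[
\sum_{q\le Q}\sum_{\chi\,(q)}\int_0^T\bigl|L(\tfrac12+c+it,\chi)\bigr|^8\,dt \ll Q^2T^2(\log QT)^{16},
\]
proved by Ramachandra's method plus the hybrid large sieve. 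This bound loses a factor $T$ against the GRH bound of \cite{CL8th}, and the paper's key new observation is that the extra decay in $|s_1+s_2|$ of $\Wt_2^{\pm}$ (not exploited in \cite{CL8th,CIS}) exactly absorbs that loss; the need for a $q$-average in the proposition is also what forces the auxiliary restriction $a\le A$. To repair your proposal you would need to replace Stage 3 entirely by this divisor-switching argument and supply the unconditional moment bound; the Kloosterman/spectral path is not viable at the eighth moment.
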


\begin{rem}
Note that the main term in the theorem is of the order $Q^2 (\log Q)^{16}$ and one obtains as a corollary that
\begin{align*}
 \sum_{q \leq Q} \ \sumb_{\chiq} &\int_{-\infty}^{\infty} \left| \Lambda\big( \tfrac{1}{2} + it, \chi \big)\right|^8  \> dt \\
&\sim 24024 a_4\sum_{q \leq Q} \prod_{p|q} \frac{\left( 1 - \frac{1}{p}\right)^7}{\left( 1 + \frac{9}{p} + \frac{9}{p^2} + \frac{1}{p^3}\right)} \phib \frac{(\log q)^{16}}{16!} \int_{-\infty}^{\infty} \left| \Gamma \left( \frac{1}{4} + \frac{it}{2}\right)\right|^8 \> dt. 
\end{align*}
\end{rem}

\begin{rem}
In \cite{CLMR6th}, the present authors will remove the $t$-average from the sixth moment in the work of Conrey, Iwaniec, and Soundararajan~\cite{CIS}. In particular, we will show that

	\begin{align*}
	\sum_{q\leq Q} \;\; &\sumb_{\chiq}  \left| L\big( \tfrac{1}{2} , \chi \big)\right|^6  \sim 42 a_3\sum_{q\leq Q}  \prod_{p|q} \frac{\left( 1 - \frac{1}{p}\right)^5}{\left( 1 + \frac{4}{p} + \frac{1}{p^2}\right)} \phib \frac{(\log q)^{9}}{9!} ,
	\end{align*}where
$$ a_3 = \prod_p \left(1-\frac{1}{p^4}\right)\left(1+\frac{4}{p} + \frac{1}{p^2}\right).$$
However, it remains challenging to remove the $t$-average for the eighth moment.

\end{rem}

\section{A sketch of the proof}
In this section we provide a sketch of the proof where we ignore various technicalities such as complicated smooth weights, the inclusion-exclusion within the orthogonality over primitive characters and a number of coprimality conditions and common divisors. 

Roughly speaking, after applying the approximate functional equation, we need to understand sums of the form
\begin{equation}
\label{eq:SketchMainSum}
Q\sum_{q} \Psi\bfrac{q}{Q} \sum_{m \leq Q^2} \sum_{\substack{ n \leq Q^2 \\ m\equiv n \pmod{q}}} \frac{\tau_4(m)\tau_4(n)}{\sqrt{mn}},
\end{equation}
where $\tau_4(n) = \sum_{n = n_1n_2n_2n_4} 1.$ Here we were able to make the restriction $m, n \leq Q^2$ instead of just $m n \leq Q^4$ thanks to the $t$-average in the theorem. 

The diagonal contribution $m=n$ in~\eqref{eq:SketchMainSum} is fairly easy to understand, and in this sketch we shall concentrate on the non-diagonal contribution. Let $\varepsilon_0 > 0$ be small but fixed. Following~\cite{CL8th} the sums over $m$ and $n$ in~\eqref{eq:SketchMainSum} can be truncated to 
\[
m, n \leq N:=  \frac{Q^2}{\exp((\log Q)^{\varepsilon_0})}
\]
using the multiplicative large sieve. Still following~\cite{CL8th} (and~\cite{CIS}) we apply in the most critical range the complementary divisor trick. That is we write $m - n = hq$ in~\eqref{eq:SketchMainSum} and replace the congruence condition modulo $q$ with a congruence condition modulo $h$.  Note that $h\ll N/q$ is smaller than $Q$, so we have a reduction in the arithmetic conductor.

After switching to the complementary divisor, we express the congruence condition modulo $h$ using characters modulo $h$, so that roughly we want to study
\begin{equation}
\label{eq:SketchCompl}
Q\sum_{h \leq 2N/Q} \frac{1}{\phi(h)}\sum_{\chi \textup{(mod $h$)}}\sum_{m, n} \frac{\tau_4(m)\tau_4(n)\chi(m) \overline{\chi(n)}}{\sqrt{mn}}\Psi\bfrac{|m-n|}{hQ} \Psi_1\left(\frac mN\right) \Psi_1\left(\frac nN\right),
\end{equation}
where $\Psi_1$ is smooth and supported on $[0, 1]$.

The principal characters give a main term contribution. In this sketch we concentrate on the non-principal characters. The smooth factor 
\[
h(m, n) := \Psi\bfrac{|m-n|}{hQ} \Psi_1\left(\frac mN\right) \Psi_1\left(\frac nN\right)
\]
restricts $m$ and $n$ to being within distance $2hQ$ from each other.  Morally, the short interval type condition $|m-n| \ll 2hQ$ introduces an archimedean conductor of size $T = \frac{N}{hQ}$.  The hybrid conductor is then $hT \asymp \frac{N}{Q}$, and this is still smaller than the original conductor $Q$.  It is important that the sums of length $N \asymp \frac{Q^2}{\exp((\log Q)^{\varepsilon_0})}$ are long compared to the hybrid conductor.  In particular, applying Fourier analysis to such a sum with $\tau_4(n)$ as the coefficient produces dual sums of length $\frac{(N/Q)^4}{N} \asymp \frac{N^3}{Q^4} \ll \frac{Q^2}{\exp(3(\log Q)^{\varepsilon_0})}$, and this is shorter than the length of the original sum $N = \frac{Q^2}{\exp((\log Q)^{\varepsilon_0})}$.  Actually for technical convenience we use the approximate functional equation rather than the functional equation, and this gives us sums of length $\bfrac{N}{Q}^2 \asymp \frac{Q^2}{\exp(2(\log Q)^{\varepsilon_0})}$, which still suffices.  This is the main motivation for the more technical arguments that follow.

To be more precise, we will reduce our problem to that of bounding a mean square of a corresponding Dirichlet series, in the spirit of arithmetic problems on almost all short intervals (see e.g.~\cite[Lemma 9.3]{HarmanBook}).  
Indeed, one can show that the Mellin transform
\begin{equation}
\label{eq:hMellin}
\widetilde{h}(s_1, s_2) = \int_0^\infty \int_0^\infty h(x, y) x^{s_1} y^{s_2} \frac{dy}{y} \frac{dx}{x} 
\end{equation}
converges for $\tRe s_i > 0$ and satisfies, for $\tRe s_i \in (0, 100)$,
\begin{equation}
\label{eq:hMellinBound}
\widetilde{h}(s_1, s_2) \ll \frac{1}{\tRe s_1 \cdot \tRe s_2} \left(\frac{N}{hQ}\right)^{k-1} \frac{1}{\max\{|s_1|, |s_2|\}^{k} |s_1 + s_2|^{l}} N^{\tRe s_1 + \tRe s_2} 
\end{equation}
for any integers $k \geq 1$ and $l \geq 0$. In showing this, one can assume $|s_1| \geq |s_2|$, and in this case~\eqref{eq:hMellinBound} follows by applying in~\eqref{eq:hMellin} first partial integration $k$ times with respect to $x$, then substituting $y = xz$ and finally applying partial integration $l$ times with respect to $x$. A similar argument with our more complicated weight function can be found from the proof of Lemma~\ref{lem:MellinXY}.

Now by the Mellin inversion, the non-principal characters contribute to~\eqref{eq:SketchCompl}
\[
Q \sum_{h \leq 2N/Q} \frac{1}{\phi(h)}\sum_{\substack{\chi \textup{(mod $h$)} \\ \chi \neq \chi_0}} \int_{(1/2+\varepsilon)} \int_{(1/2+\varepsilon)} L(1/2+s_1, \chi)^4 L(1/2+s_2, \overline{\chi})^4 \widetilde{h}(s_1, s_2) ds_1 ds_2.
\]
Moving the contours close to $\tRe s_i = 0$, plugging in the bound~\eqref{eq:hMellinBound} for $\widetilde{h}(s_1, s_2)$, and using the inequality $xy \leq |x|^2 + |y|^2$, this can be seen to be essentially
\[
\ll Q \sum_{h \leq 2N/Q} \frac{1}{\phi(h)}\sum_{\chi \textup{(mod $h$)}} \frac{hQ}{N} \int_{-N/(hQ)}^{N/(hQ)} \left|L(1/2+it, \chi)\right|^8 dt.
\]
Hence our task more-or-less reduces to showing that, for any $H \leq N/Q$ and $C \geq 1$, we have
\begin{equation}
\label{eq:sketch8int}
\frac{Q^2}{N} \sum_{h \sim H}\sum_{\chi \textup{(mod $h$)}} \int_{-T}^{T} \left| L(1/2+it, \chi)\right|^8 dt \ll \frac{Q^2}{(\log Q)^C} \quad \text{for $T = N/(HQ)$}.
\end{equation}
Here and later the notation $h \sim H$ in summations means that $H < h \leq 2H$. Since $HT = N/Q$, the approximate functional equation morally allows us to approximate $|L(1/2+it, \chi)|^4$ by $|\sum_{n \ll (N/Q)^2} \chi(n) \tau_4(n) n^{-1/2-it}|$ (see Proposition~\ref{prop:8momentIntandSumoverq} below for a rigorous argument) and hence by the large sieve (Lemma~\ref{lem:largesieve} below) the left hand side of~\eqref{eq:sketch8int} can be shown to be
\[
\ll \frac{Q^2}{N} \left(H^2 T + \left(\frac{N}{Q}\right)^2 \right) (\log N)^{O(1)} \ll \left(QH + N\right) (\log N)^{O(1)}. 
\]
We recall that $H \leq N/Q$ and  $N = \frac{Q^2}{\exp((\log Q)^{\varepsilon_0})},$ so the above is $O(Q^2/(\log Q)^{C})$ for any $C \geq 1$.


The current paper has a lot in common with~\cite{CL8th} and we freely borrow results from there, so the reader may want to have~\cite{CL8th} at hand. However, if the reader is ready to take those results granted or to work them out, the current paper can be read alone.

Throughout the paper, $\varepsilon$ denotes a small positive real
number. Furthermore $\varepsilon_0$ and $\Delta_0$ will be fixed positive constants that are chosen later. 

\section{Large sieve and upper bounds for moments}

Let us first recall the hybrid large sieve (see e.g.~\cite[Theorem 5.1]{RichertNotes}).
\begin{lemma}\label{lem:largesieve} 
Let $T, Q \geq 1$. For any complex coefficients $a_n$ with $\sum_{n = 1}^\infty |a_n| < \infty$, one has
\[
\sum_{q \leq Q} \, \sumstar_{\chi \ ({\rm mod} \ q)} \frac{q}{\phi(q)} \int_{-T}^T \left| \sum_{n=1}^\infty a_n \chi(n) n^{it} \right|^2 \ll \sum_{n=1}^\infty (Q^2 T + n) |a_n|^2.
\]
In particular, for any $N \geq 1$ and any complex coefficients $a_n$, one has
\[
\sum_{q \leq Q} \,\sumstar_{\chi \ ({\rm mod} \ q)} \frac{q}{\phi(q)} \int_{-T}^T \left| \sum_{n \leq N} a_n \chi(n) n^{it} \right|^2 \ll (Q^2 T + N) \sum_{n \leq N} |a_n|^2.
\]
\end{lemma}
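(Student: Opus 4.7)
The first (infinite) assertion implies the second by setting $a_n = 0$ for $n > N$ and noting $Q^2 T + n \leq Q^2 T + N$ for $n \leq N$. So I would focus on the first inequality.

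My plan is to combine the classical multiplicative large sieve for primitive Dirichlet characters with the Montgomery–Vaughan mean value theorem on the critical line, linked via a Sobolev/Gallagher reduction. Write $F_\chi(t) := \sum_n a_n \chi(n) n^{it}$. The multiplicative large sieve applied pointwise in $t$ gives
\[
\sum_{q \leq Q}\sumstar_{\chi \, ({\rm mod}\, q)} \frac{q}{\phi(q)} \, |F_\chi(t)|^2 \ll \sum_n (Q^2 + n)\,|a_n|^2,
\]
while the archimedean input $\int_{-T}^T |\sum_n b_n n^{it}|^2\,dt \ll \sum_n (T+n)|b_n|^2$ (Montgomery–Vaughan's Hilbert inequality) controls the $t$-integral for any fixed $\chi$. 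The key is to merge them without losing a factor of $T$ in the second term.

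To do this, I would partition $[-T,T]$ into unit intervals $I_j$ centered at integer points $t_j$, and use the elementary Sobolev-type inequality
\[
\int_{I_j} |F_\chi(t)|^2\,dt \leq |F_\chi(t_j)|^2 + 2\int_{I_j} |F_\chi(t)||F_\chi'(t)|\,dt.
\]
Summing over $j$, applying Cauchy–Schwarz to the derivative term, and absorbing via AM–GM reduces matters to bounding $\sum_{q,\chi} \tfrac{q}{\phi(q)} \sum_j |F_\chi(t_j)|^2$ (by the multiplicative large sieve at each of the $O(T)$ grid points) together with $\sum_{q,\chi} \tfrac{q}{\phi(q)} \int|F_\chi'|^2\,dt$ (where $F_\chi'$ has coefficients $i a_n \log n$, handled similarly). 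The straightforward summation over the grid yields the desired $Q^2 T \sum|a_n|^2$ in the main term, but the naive off-diagonal pickup carries a superfluous $T$ in front of $\sum n|a_n|^2$.

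The main obstacle is exactly removing this extra factor of $T$. I expect to deal with it by opening the square before integration, separating diagonal from off-diagonal, and handling the off-diagonal $\sum_{m \neq n} a_m \overline{a_n} \chi(m)\overline{\chi(n)} \cdot \min(T, |\log(m/n)|^{-1})$ using orthogonality $\sum_{q}\sumstar_\chi \chi(m)\overline{\chi(n)} \ll \sum_{q | m-n} \phi(q)$, combined with Hilbert's inequality in the form $\sum_{m \neq n} a_m \overline{a_n}/\log(m/n) \ll \sum_n n|a_n|^2$. The telescoping of the congruence weights with the kernel $\min(T, |\log(m/n)|^{-1})$ yields $\sum_n n|a_n|^2$ without the extra $T$, and adding this to the diagonal's $Q^2 T \sum|a_n|^2$ gives the claimed bound.
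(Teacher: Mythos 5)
The paper does not prove this lemma at all: it is quoted verbatim from the literature (the citation is to Theorem 5.1 of the referenced lecture notes), so your task was to reconstruct the standard proof of the hybrid large sieve. Your reduction of the finite statement to the infinite one is fine, and your diagnosis of the difficulty is exactly right: a naive discretization of the $t$-integral into $O(T)$ points followed by the multiplicative large sieve yields $T\sum_n(Q^2+n)|a_n|^2$, which carries a spurious factor $T$ on the second term. The problem is that your proposed repair of this defect does not work as described, and this is where the real content of the lemma lives.

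Concretely, after opening the square the off-diagonal term is
$\sum_{m\neq n}a_m\overline{a_n}\,\bigl(\sum_{q\le Q}\tfrac{q}{\phi(q)}\sumstar_{\chi}\chi(m)\overline{\chi}(n)\bigr)\tfrac{2\sin(T\log(m/n))}{\log(m/n)}$, and there are only two natural orders of attack, both of which lose large factors. If you apply the Montgomery--Vaughan Hilbert inequality character by character (writing $\sin$ as a difference of exponentials, so that each $\chi$ contributes $\ll\sum_n n|a_n|^2$) and then sum trivially over the $\asymp Q^2$ characters, you get $Q^2\sum_n n|a_n|^2$. If instead you apply orthogonality first, the kernel acquires the arithmetic weight $\sum_{r\mid m-n,\,r\le Q}\phi(r)(\cdots)$ and is no longer of Hilbert type; restoring Hilbert's inequality by sorting $m,n$ into residue classes modulo each $r$ (so the spacing of $\log n$ becomes $\asymp r/n$) gives $\ll\sum_n(n/r)|a_n|^2$ per $r$, and the sum $\sum_{r\le Q}\phi(r)\cdot\frac{Q}{r}\cdot\frac1r$ over the weights still leaves a factor $\asymp Q\log Q$. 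So the asserted ``telescoping'' to $\sum_n n|a_n|^2$ is not substantiated, and I do not see how to extract the needed joint cancellation between the $\chi$-sum and the $t$-oscillation along these lines. The standard proof avoids opening the square over $t$ altogether: one first applies Gallagher's lemma, $\int_{-T}^{T}|\sum_n b_n n^{it}|^2\,dt\ll T^2\int_0^\infty|\sum_{x<n\le xe^{1/T}}b_n|^2\,\frac{dx}{x}$, which localizes $n$ to multiplicatively short intervals of length $\asymp n/T$, and only then applies the multiplicative large sieve to each short coefficient sum; the large sieve's ``$+N$'' term becomes ``$+n/T$'' and the prefactor $T^2\cdot(1/T)$ from the localization restores exactly $Q^2T+n$. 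That localization device is the missing idea in your sketch.
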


The following proposition gives an upper bound for the eight moment of Dirichlet $L$-functions averaged over $\chi, q$ and $t$. It will be used in bounding the off-diagonal terms in Section \ref{sec:errorEg}. 

\begin{prop} \label{prop:8momentIntandSumoverq} For $Q, T \geq 3$, one has
\begin{align*}
& \sum_{q \leq Q} \sum_{\chi \ ({\rm mod} \ q)} \int_{0}^T \left|L\left( \frac{1}{2} + c + it, \chi \right) \right|^{8} dt \ll Q^2 T^2  (\log (QT))^{16},
\end{align*}whenever $0\leq c \leq 1/100$.
\end{prop}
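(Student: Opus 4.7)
The plan is to apply the approximate functional equation to $L(s,\chi)^4$, bound $|L|^8$ by the squares of the resulting Dirichlet polynomials, reduce to primitive characters, and invoke the hybrid large sieve (Lemma~\ref{lem:largesieve}). As a preliminary step, every $\chi \pmod q$ is induced from a primitive $\chi^* \pmod d$ with $d \mid q$, so
$|L(s,\chi)|^8 = |L(s,\chi^*)|^8 \prod_{p \mid q,\, p \nmid d} |1-\chi^*(p) p^{-s}|^8 \le |L(s,\chi^*)|^8 \prod_{p \mid q/d}(1+p^{-1/2})^8$. The Dirichlet series in $m = q/d$ of the multiplicative function $g(m) = \prod_{p \mid m}(1+p^{-1/2})^8$ equals $\zeta(s)$ times an Euler product absolutely convergent in $\tRe s > 1/2$, so $\sum_{m \le M} g(m) \ll M$, giving
\[
\sum_{q \le Q} \sum_{\chi \pmod q} |L(s,\chi)|^8 \ll \sum_{d \le Q} \frac{Q}{d} \sumstar_{\chi \pmod d} |L(s,\chi)|^8.
\]

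Next I would apply an approximate functional equation to $L(s,\chi)^4$ for primitive $\chi \pmod d$, writing $L(s,\chi)^4 = A(\chi,t) + B(\chi,t)$ with $A,B$ smoothly weighted Dirichlet polynomials $\sum_n \tau_4(n)\chi(n) n^{-1/2-c \mp it} V^\pm(n;d,t)$ and $V^\pm$ cutting off at $n \asymp (d(1+|t|))^2$; by Stirling the gamma-ratio prefactor in $B$ has modulus $\ll (d(1+|t|))^{-4c} \le 1$, so both pieces contribute equally. Using $|L(s,\chi)|^8 \le 2(|A|^2 + |B|^2)$ together with a smooth dyadic partition of unity in $n$ (and absorbing the mild $t$-dependence of $V^\pm$ via its rapidly decaying Mellin transform in $n$, much as in the proof of Lemma~\ref{lem:MellinXY}) reduces matters to bounding, for each dyadic $D \le Q$ and dyadic $N \le (DT)^2 (\log DT)^{O(1)}$, the expression
\[
S(D,N) \assign \sum_{d \sim D}\sumstar_{\chi \pmod d} \int_0^T \bigg| \sum_{n \sim N} \tau_4(n) \chi(n) n^{-1/2-c-it} U(n/N) \bigg|^2 dt
\]
for a fixed bump $U$, and then multiplying by $Q/D$ and summing over $D$ and $N$.

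The hybrid large sieve (Lemma~\ref{lem:largesieve}) immediately yields
\[
S(D,N) \ll (D^2 T + N)\sum_{n \sim N} \tau_4(n)^2 n^{-1-2c} \ll (D^2 T + N) N^{-2c}(\log N)^{15},
\]
using the standard estimate $\sum_{n \le x}\tau_4(n)^2 \ll x(\log x)^{15}$. Multiplying by $Q/D$ and summing over dyadic $N \le (DT)^2$ produces $\ll QDT^2 (\log QT)^{16}$: the $D^2 T$ part gives $QDT \cdot \sum_N N^{-2c}(\log N)^{15} \ll QDT(\log QT)^{16}$ (the $N$-sum being convergent for $c>0$ and bounded by $(\log QT)^{16}$ at $c=0$), while the $N$ part gives $(Q/D)\sum_N N^{1-2c}(\log N)^{15} \ll QDT^2(\log QT)^{15}$ by geometric summation. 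The geometric sum in dyadic $D \le Q$ is dominated by $D \asymp Q$ and yields the claimed bound $O(Q^2 T^2 (\log QT)^{16})$.

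The principal technical difficulty is handling the $t$-dependence of the AFE weight $V^\pm(n;d,t)$ so that Lemma~\ref{lem:largesieve} applies with $t$-independent coefficients; the standard Mellin-inversion trick works but is the most delicate step. Care is also needed to keep the log-power at exactly $16$ rather than $16+\varepsilon$, which is achieved by exploiting that each dyadic sum encountered is either geometric or convergent in the relevant regime.
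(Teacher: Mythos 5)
Your overall strategy is the same as the paper's: reduce to primitive characters (your Euler-factor/divisor-sum accounting is equivalent to the paper's \eqref{eq:8MomRoughPrimSplit}), approximate $L^4$ by Dirichlet polynomials of length $\asymp (qT)^2$, and apply the hybrid large sieve of Lemma~\ref{lem:largesieve}; your bookkeeping of the dyadic sums and the log-power $16$ is also correct. However, the one step you defer --- ``absorbing the mild $t$-dependence of $V^\pm$ via its rapidly decaying Mellin transform'' --- is precisely the entire technical content of the paper's proof, and as described it does not go through. If you Mellin-invert the approximate-functional-equation weight $V^\pm(n; d, t)$ in $n$, the resulting inner sum is $\sum_n \tau_4(n)\chi(n) n^{-s-w}=L(s+w,\chi)^4$ again, not a finite Dirichlet polynomial with $t$-independent coefficients; and if you instead truncate first, the truncation point $(d(1+|t|))^2$ itself varies with $t$ and $d$ across the average, which is exactly the obstruction. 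The paper resolves this with Ramachandra's device (Lemma~\ref{le:RamId}): an exact identity with a \emph{fixed} smoothing parameter $X=(QT)^2$, producing four terms $J_1,\dots,J_4$ in which the coefficients $e^{-n/X}$ and the cutoffs $n\le X$, $n>X$ are independent of $t$ and $q$, with all $t,q$-dependence pushed into prefactors $F(\tfrac12+c+it+w)\Gamma(w)X^w$ that are bounded uniformly via Stirling and then removed by Cauchy--Schwarz in the $w$-integral before the large sieve is applied. Some such device (or a genuinely uniform separation of variables) must be supplied; asserting that ``the standard Mellin-inversion trick works'' is not a proof here.

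Two smaller omissions: the modulus $q=1$ (equivalently $d=1$ after your reduction, weighted by $Q$) cannot be treated by the same AFE argument because of the pole of $\zeta$ --- the paper handles it separately via the unconditional bound $\int_{-T}^T|\zeta(\tfrac12+c+it)|^8\,dt\ll T^{3/2}(\log T)^{21/2}$ of Lemma~\ref{le:zeta8th} (Cauchy--Schwarz between the fourth and twelfth moments), and Lemma~\ref{le:RamId} carries an explicit residue term $\mathbf{1}_{q=1}$ for the same reason. You should also verify your claim that the dual-term prefactor is $\ll (d(1+|t|))^{-4c}\le 1$ uniformly, which requires the Stirling analysis culminating in \eqref{eq:Gammadiv}, including the range $|\tIm s|<10$ where the asymptotic expansion is not directly applicable.
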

\begin{rem}
In~\cite{CL8th} it was shown that conditionally on GRH, one has
\begin{equation}
\label{eq:CL8thupperGRH}
\sum_{\chi \ ({\rm mod} \ q)} \int_{0}^T \left|L\left( \frac{1}{2} + c + it, \chi \right) \right|^{8} dt \ll q T  (\log (qT))^{16+\varepsilon}.
\end{equation}
The unconditional Proposition~\ref{prop:8momentIntandSumoverq} suffices for us since we will afford to lose a factor $T$ due to the decay in $|s_1+s_2|$ coming from Lemma~\ref{lem:MellinXY} below. On the other hand the fact that we need an additional average over $q$ in Proposition~\ref{prop:8momentIntandSumoverq} compared to~\eqref{eq:CL8thupperGRH} will cause us some minor technical difficulties (see in particular Remark~\ref{rem:aleqA}).
\end{rem}
\begin{proof}[Proof of Proposition~\ref{prop:8momentIntandSumoverq}]
The contribution from $q=1$ is acceptable by known moment bound for the Riemann zeta function (see Lemma~\ref{le:zeta8th} below). Moreover, the part with bounded $t$ can be easily dealt with using the approximate functional equation and the large sieve (see~\cite[Theorem 7.34]{iw-kow}, the proof works just as well with $16$ in place of $17$). 

Hence, by dyadic splitting, it suffices to prove, for $Q \geq 3/2$ and $T \geq 3$,
\begin{equation}
\label{eq:8thupdyadclaim}
\sum_{q \sim Q} \sum_{\chi \ ({\rm mod} \ q)} \int_{T}^{2T} \left|L\left( \frac{1}{2} + c + it, \chi \right) \right|^{8} dt \ll Q^2 T^2  (\log (QT))^{16}.
\end{equation}
As usual, we first reduce to primitive characters; we claim that~\eqref{eq:8thupdyadclaim} follows once we have shown that
\begin{equation}
\label{eq:8thupprim}
\sum_{q \sim Q} \ \sumstar_{\chi \ ({\rm mod} \ q)} \int_{T}^{2T} \left|L\left( \frac{1}{2} + c + it, \chi \right) \right|^{8} dt \ll Q^2 T^2  (\log (QT))^{16}.
\end{equation}
Indeed, the left hand side of~\eqref{eq:8thupdyadclaim} is
\begin{equation}
\label{eq:8MomRoughPrimSplit}
\begin{split}
&\leq \sum_{r \leq Q} \sum_{q_1 \sim \frac{Q}{r}} \  \sumstar_{\chi_1 \ ({\rm mod} \ q_1)} \int_{T}^{2T} \left|L\left( \frac{1}{2} + c+it, \chi_1 \right) \right|^{8}  \prod_{\substack{p|r}} \left(1 + \frac{1}{p^{1/2}}\right)^{8}  \>dt
\end{split}
\end{equation}
Applying~\eqref{eq:8thupprim}, this is at most
\[
Q^2 T^2 (\log (QT))^{16} \sum_{r \leq Q} \frac{1}{r^2} \prod_{\substack{p|r}} \left(1 + \frac{1}{p^{1/2}}\right)^{8}   \ll Q^2 T^2 (\log (QT))^{16}
\]
as claimed.
Hence it suffices to prove~\eqref{eq:8thupprim}. For $\chi$ a primitive character mod $q$, by the approximate functional equation we have morally that
\[
\left|L\left( \frac{1}{2} + it, \chi \right)^4 \right| \ll \left|\sum_{n \ll (TQ)^{2}} \frac{\tau_4(n) \chi(n)}{n^{1/2+it}}\right| + \left|\sum_{n \ll (TQ)^{2}} \frac{\tau_4(n)\overline{\chi}(n)}{n^{1/2-it}}\right|.
\]
If this was true, the claim~\eqref{eq:8thupprim} (for $c = 0$) would follow from the hybrid large sieve (Lemma~\ref{lem:largesieve}). However, there is a technical issue that the Dirichlet polynomials in the approximation depend mildly on $t$ and $q$. To proceed rigorously, we use a method of Ramachandra~\cite{Ramachandra}.

Let us concentrate on the sum in~\eqref{eq:8thupprim} over even characters, the odd characters being handled similarly. By the functional equation~\eqref{eq:FE}, we have
$$
L(\tfrac 12 + s, \chi)^4 = F(\tfrac 12 + s) L(\tfrac 12 - s, \overline{\chi})^4
$$
with
$$
F(\tfrac 12 + s) := \varepsilon_{\chi}^4 \Big ( \frac{\pi}{q} \Big )^{4 s} \frac{\Gamma \Big ( \frac{1}{4} -\frac{s}{2} \Big )^4}{\Gamma \Big ( \frac{1}{4} + \frac{s}{2} \Big )^4},
$$
where $|\varepsilon_{\chi}| = 1$. For further convenience, let us record here Stirling's formula which gives, for $|\Arg(z)| < \pi -\varepsilon$, 
\begin{equation}
\label{eq:Stirling}
\Gamma(z) = \sqrt{\frac{2\pi}{z}} \left(\frac{z}{e}\right)^z \left(1+O(1/z)\right) \ll \frac{|z|^{\tRe z - 1/2}}{e^{\tRe z}} \exp(- \tIm z \cdot \Arg(z)).
\end{equation}
In particular, for $\tRe s \in [-1, 1/3]$ and $|\tIm s| \geq 10$, we have 
\begin{equation}
\label{eq:GammaDivInt}
\frac{\Gamma (\frac{1}{4} -\frac{s}{2})}{\Gamma(\frac{1}{4} + \frac{s}{2})} \asymp \frac{\exp(\tRe(-\frac{1}{4} -\frac{s}{2}) \log |\frac{1}{4} -\frac{s}{2}| - \textrm{Im}(\frac{-s}{2}) \Arg(\frac{1}{4} -\frac{s}{2}))}{\exp(\tRe(-\frac{1}{4} +\frac{s}{2}) \log |\frac{1}{4} + \frac{s}{2}| - \textrm{Im}(\frac{s}{2}) \Arg(\frac{1}{4} +\frac{s}{2}))}
\end{equation}
and
\[
\log \left|\frac{1}{4}\pm \frac{s}{2}\right| = \log(|s|+1) + O(1) \quad \text{and} \quad \Arg\left(\frac{1}{4} \pm \frac{s}{2}\right) = \Arg\left(\pm \frac{s}{2}\right) + O\left(\frac{1}{|s|+1}\right).
\]
Noticing that the left hand side of~\eqref{eq:GammaDivInt} stays bounded when $\tRe s \in [-1, 1/3]$ and $|\tIm s| < 10$, we obtain that, for any $s$ with $\tRe s \in [-1, 1/3]$,
\begin{equation}
\label{eq:Gammadiv}
\frac{\Gamma (\frac{1}{4} -\frac{s}{2})}{\Gamma(\frac{1}{4} + \frac{s}{2})} \ll \exp(-\tRe (s) \log(|s|+1)) \asymp (|s|+1)^{-\tRe s}.
\end{equation}



The starting point in the proof of~\eqref{eq:8thupprim} is the following lemma.
\begin{lemma}
Let $0 \leq c \leq 1/100$, let $\chi$ be an even primitive Dirichlet character of modulus $q \in \mathbb{N}$, and let $t \in \mathbb{R}$. Then
\label{le:RamId}
\begin{equation}
\label{eq:RamId}
\begin{split}
L(\tfrac 12 + c+ it, \chi)^4 & = \sum_{n = 1}^{\infty} \frac{\tau_4(n) \chi(n) e^{-n / X}}{n^{1/2 + c+ it}} + F(\tfrac 12 + c + it) \sum_{n \leq X} \frac{\tau_4(n)\overline{\chi}(n)}{n^{1/2 - c - it}} \\ & - \frac{1}{2\pi i} \int_{(-3/4)} F(\tfrac 12 + c+ it + w) \Big ( \sum_{n > X} \frac{\tau_4(n) \overline{\chi}(n)}{n^{1/2-w - c  - it}} \Big ) \Gamma(w) X^w dw \\ 
& - \frac{1}{2\pi i}\int_{(1/4)} F(\tfrac 12 + c + it + w) \Big ( \sum_{n \leq X} \frac{\tau_4(n) \overline{\chi}(n)}{n^{1/2 - c -w- it}} \Big ) \Gamma(w) X^w dw \\
&- \mathbf{1}_{q = 1}  \operatorname*{Res}_{w=1/2 - c - it} \Gamma(w) X^{w}  \zeta^4\left(\frac 12 + c + it + w\right),
\end{split}
\end{equation}
where $\mathbf{1}_{q = 1}$ is $1$ if $q = 1$, and $0$ otherwise.
\end{lemma}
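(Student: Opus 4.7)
The plan is to derive~\eqref{eq:RamId} by manipulating a completing contour integral. Consider
\[
J \assign \frac{1}{2\pi i} \int_{(2)} L\bigl(\tfrac{1}{2}+c+it+w,\chi\bigr)^4 \Gamma(w) X^w \, dw.
\]
On the line $\tRe w = 2$ the Dirichlet series for $L^4$ converges absolutely, so I can expand $L^4$ and interchange the summation with the integration; the Mellin--Barnes identity $\frac{1}{2\pi i}\int_{(2)} \Gamma(w) y^{-w}\, dw = e^{-y}$ with $y = n/X$ then identifies $J$ with $\sum_{n} \tau_4(n)\chi(n) e^{-n/X}/n^{1/2+c+it}$, matching the first term on the right of~\eqref{eq:RamId}.

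Next I shift the contour from $\tRe w = 2$ to $\tRe w = -3/4$. The only poles crossed in the strip are the simple pole of $\Gamma(w)$ at $w=0$, whose residue is $L(\tfrac{1}{2}+c+it,\chi)^4$, and, when $q=1$, the order-four pole of $\zeta(\tfrac{1}{2}+c+it+w)^4$ at $w = \tfrac{1}{2}-c-it$, which lies strictly inside the strip because $0 \le c \le 1/100$. The horizontal closing segments vanish thanks to the exponential decay of $\Gamma(w)$ in $|\tIm w|$ (Stirling's formula~\eqref{eq:Stirling}) combined with at-most polynomial growth of $L^4$ on horizontal lines.

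On the shifted contour I apply the functional equation~\eqref{eq:FE}, rewriting
\[
L\bigl(\tfrac{1}{2}+c+it+w,\chi\bigr)^4 = F\bigl(\tfrac{1}{2}+c+it+w\bigr)\, L\bigl(\tfrac{1}{2}-c-it-w,\overline{\chi}\bigr)^4.
\]
On the line $\tRe w = -3/4$ the argument $\tfrac{1}{2}-c-it-w$ has real part $\tfrac{5}{4}-c>1$, so the dual $L^4$ admits an absolutely convergent Dirichlet series in $\tau_4(n)\overline{\chi}(n)/n^{1/2-c-it-w}$. I split this series at $n = X$: the $n>X$ tail stays on the line $\tRe w = -3/4$ and becomes the first contour integral in~\eqref{eq:RamId}. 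For the $n\leq X$ head, each term is essentially $n^{w}$ times something bounded in $w$, so the integrand decays as $\tRe w\to +\infty$ and I can shift the contour from $(-3/4)$ to $(1/4)$; the only pole crossed is the simple pole of $\Gamma$ at $w=0$, whose residue is exactly $F(\tfrac{1}{2}+c+it)\sum_{n\leq X}\tau_4(n)\overline{\chi}(n)/n^{1/2-c-it}$, giving the second term of~\eqref{eq:RamId}. The remaining integral on $\tRe w = 1/4$ is the fourth term of~\eqref{eq:RamId}.

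Collecting everything and solving for $L(\tfrac{1}{2}+c+it,\chi)^4$ gives the stated identity. The chief technicalities are the vanishing of the horizontal pieces in each of the two contour shifts and the bookkeeping of signs---one shift moves leftward (picking up $L^4$ and the $\zeta^4$ residue on the enclosed side) and the other moves rightward (picking up the $F$-twisted sum). Both reduce routinely to Stirling's formula~\eqref{eq:Stirling} combined with standard convexity bounds for $L(s,\chi)$, so no essential obstacle is anticipated.
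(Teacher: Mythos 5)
Your proposal is correct and follows essentially the same route as the paper's proof: Mellin--Barnes representation of the smoothed sum, shift from $\tRe w=2$ to $\tRe w=-3/4$ collecting the residues at $w=0$ and (for $q=1$) at $w=1/2-c-it$, application of the functional equation on the shifted line, splitting the dual Dirichlet series at $n=X$, and shifting the $n\leq X$ piece to $\tRe w=1/4$ to pick up the $F(\tfrac 12+c+it)\sum_{n\leq X}$ term. The sign bookkeeping and the justification of the contour shifts via Stirling's formula match the paper's argument.
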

\begin{proof}
This follows as~\cite[Theorem 2]{Ramachandra}: Notice first that by Mellin inversion (since $\Gamma(s)$ is the Mellin transform of the function $e^{-x}$)
$$
\sum_{n = 1}^{\infty}  \frac{\tau_4(n)\chi(n) e^{-n / X}}{n^{1/2 + c+it}}  = \frac{1}{2\pi i} \int_{(2)} L(\tfrac 12 + c+ it + w, \chi)^4 \Gamma(w) X^w dw.
$$
We shift the integral to the line $\tRe w = - \tfrac 34$, collecting a residue from the pole at $w = 0$ and in case $q = 1$ also from a pole at $w = 1/2-c-it$. Applying also the functional equation to the integral on the line $\tRe w = - \tfrac 34$, we see that 
\[
\begin{split}
&\sum_{n = 1}^{\infty} \frac{\tau_4(n) \chi(n) e^{-n / X}}{n^{1/2 + c+it}}   = L(\tfrac 12 + c + it, \chi)^4 + \mathbf{1}_{q = 1} \operatorname*{Res}_{w=1/2 - c - it} \Gamma(w) X^{w}  \zeta^4\left(\frac 12 + c + it + w\right)\\
& + \frac{1}{2\pi i} \int_{(-3/4)} F(\tfrac 12 + c+ it + w, \chi) L(\tfrac 12 - c - it - w, \overline{\chi})^4 \Gamma(w) X^w dw.
\end{split}
\]
Writing on the last line
\[
L(\tfrac 12 - c - it - w, \overline{\chi})^4 = \sum_{n \leq X} \frac{\tau_4(n) \overline{\chi}(n)}{n^{1/2-c-it-w}} + \sum_{n > X} \frac{\tau_4(n) \overline{\chi}(n)}{n^{1/2-c-it-w}},
\]
the claim of the lemma reduces to the claim that
\[
\begin{split}
& \frac{1}{2\pi i} \int_{(-3/4)} F(\tfrac 12 + c+ it + w, \chi) \sum_{n \leq X} \frac{\tau_4(n) \overline{\chi}(n)}{n^{1/2-c-it-w}} \Gamma(w) X^w dw \\
&=  \frac{1}{2\pi i} \int_{(1/4)} F(\tfrac 12 + c+ it + w, \chi) \sum_{n \leq X} \frac{\tau_4(n) \overline{\chi}(n)}{n^{1/2-c-it-w}} \Gamma(w) X^w dw \\
& \qquad \qquad - F(\tfrac 12 + c+ it, \chi) \sum_{n \leq X} \frac{\tau_4(n)\overline{\chi}(n)}{n^{1/2 -c - it}}.
\end{split}
\]
But this is immediate from shifting the integration line, picking up a residue from a pole at $w=0$.
\end{proof}
Let us now return to~\eqref{eq:8thupprim} for even characters. Recall $Q \geq 3/2$ and $T \geq 3$. We take $X= (QT)^2$ and apply Lemma~\ref{le:RamId}, writing~\eqref{eq:RamId} as $L(\tfrac 12 +c+ it, \chi)^4 = (J_1 + J_2 - J_3 - J_4)(c, t, \chi)$ (the fifth term in~\eqref{eq:RamId} always vanishes as $q \geq 2$, so we do not need to include it). Then it suffices to show that
$$
\sum_{q \sim Q} \ \sumb_{\substack{\chi \pmod{q}}} \int_T^{2T} |J_k(c, t, \chi)|^2 dt \ll (QT)^2 (\log (QT))^{16}
$$
for $k = 1,2,3,4$. First, by the large sieve (Lemma~\ref{lem:largesieve}), we have
\[
\begin{split}
&\sum_{q \sim Q} \ \sumb_{\substack{\chi \pmod{q}}} \int_{T}^{2T} |J_1(c, t, \chi)|^2 dt \ll \sum_{n = 1}^\infty (Q^2 T + n) \frac{\tau_4(n)^2 e^{-2n/(Q^2T^2)}}{n^{1+2c}} \\
& \ll Q^2 T^2 \sum_{n \leq Q^2 T^2} \frac{\tau_4(n)^2}{n}  + \sum_{n > Q^2T^2} \tau_4(n)^2 e^{-2n/(Q^2T^2)} \ll Q^2T^2 (\log (QT))^{16}.
\end{split}
\]
Furthermore, noting that, by~\eqref{eq:Gammadiv}, $|F(\tfrac 12 + c + it)| \ll (q(1+|t|))^{-4c}$, the large sieve (Lemma~\ref{lem:largesieve}) implies that
\[
\begin{split}
\sum_{q \sim Q} \ \sumb_{\substack{\chi \pmod{q}}} \int_T^{2T} |J_2(c, t, \chi)|^2 dt 
 \ll (QT)^{2-8c} \sum_{n \leq Q^2 T^2} \frac{\tau_4(n)^2}{n^{1-2c}} \ll Q^2T^2 (\log (QT))^{16}.
\end{split}
\]

To deal with the remaining two cases we notice that, by~\eqref{eq:Gammadiv} and~\eqref{eq:Stirling}, we have, for $\tRe w \in \{-3/4, 1/4\}$, 
\[
|F(\tfrac 12 + it +c + w)| \ll q^{-4 \tRe w-4c} \cdot (1 + |t + \tIm w|))^{-4 \tRe w-4c} \quad \text{and} \quad |\Gamma(w)| \ll e^{-|\tIm w|}.
\]
Splitting into cases according to whether $|\tIm w| \leq \tfrac 12 |t|$ or not, we see that 
$$
|F(\tfrac 12 + c+ it + w) \Gamma(w) (QT)^{2w}| \ll (QT)^{-2 \tRe w-4c} e^{-\tfrac 12 |\tIm w|}.
$$
Hence by the Cauchy-Schwarz inequality, noting the rapid decay in $\tIm w$, and using the large sieve (\ref{lem:largesieve}) we have
\[
\begin{split}
&\sum_{q \sim Q} \ \sumb_{\substack{\chi \pmod{q}}} \int_T^{2T} |J_3(c, t, \chi)|^2 dt \\
&\ll (QT)^{3-8c} \sum_{q \sim Q} \ \sumb_{\substack{\chi \pmod{q}}} \int_T^{2T} \left(\int_{-\infty}^\infty \left|\sum_{n > Q^2T^2} \frac{\tau_4(n) \overline{\chi}(n)}{ n^{5/4-c-it-iu}}\right| e^{-u/2} du\right)^2 dt \\
&\ll (QT)^{3-8c} \max_u \sum_{q \sim Q} \ \sumb_{\substack{\chi \pmod{q}}} \int_T^{2T} \left|\sum_{n > Q^2T^2} \frac{\tau_4(n) \overline{\chi}(n)}{ n^{5/4-c-it-iu}}\right|^2 dt \\
& \ll (QT)^{3-8c} \sum_{n > Q^2 T^2} (Q^2 T + n) \frac{\tau_4(n)^2}{n^{5/2-2c}} \ll (QT)^2 (\log (QT))^{16}.
\end{split}
\]
Similarly
\[
\begin{split}
&\sum_{q \sim Q} \ \sumb_{\substack{\chi \pmod{q}}} \int_T^{2T} |J_4(c, t, \chi)|^2 dt \\ 
&\ll (QT)^{-1-8c} \max_u \sum_{q \sim Q} \ \sumb_{\substack{\chi \pmod{q}}} \int_T^{2T} \left|\sum_{n \leq Q^2T^2} \frac{\tau_4(n) \overline{\chi}(n) }{n^{1/4-c-it-iu}}\right|^2 dt \\
& \ll (QT)^{-1-8c} (Q^2 T + Q^2T^2)\sum_{n \leq Q^2 T^2} \frac{\tau_4(n)^2}{n^{1/2-2c}} \ll (QT)^2 (\log (QT))^{16}.
\end{split}
\]
\end{proof}

In case of the Riemann zeta function, a better upper bound is available, and it will be helpful in evaluating the main terms:
\begin{lemma}
\label{le:zeta8th}
Let $T \geq 3$. Then
\begin{equation}
\label{eq:zeta8th}
\int_{-T}^T |\zeta(1/2+c+it)|^8 dt \ll T^{3/2} (\log T)^{21/2}
\end{equation}
for any $c \geq 0$.
\end{lemma}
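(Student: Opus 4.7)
The plan is to deduce the $8$th moment bound by Cauchy--Schwarz interpolation between the classical fourth moment bound of Ingham and the twelfth moment bound of Heath--Brown. Specifically, writing $|\zeta|^{8} = |\zeta|^{2} \cdot |\zeta|^{6}$ and applying Cauchy--Schwarz gives
\[
\int_{-T}^T \left|\zeta(\tfrac{1}{2}+c+it)\right|^8 dt \leq \left(\int_{-T}^T \left|\zeta(\tfrac{1}{2}+c+it)\right|^4 dt\right)^{1/2} \left(\int_{-T}^T \left|\zeta(\tfrac{1}{2}+c+it)\right|^{12} dt\right)^{1/2}.
\]
The exponent arithmetic matches exactly: $\tfrac{1}{2}(1) + \tfrac{1}{2}(2) = \tfrac{3}{2}$ for the power of $T$, and $\tfrac{1}{2}(4) + \tfrac{1}{2}(17) = \tfrac{21}{2}$ for the power of $\log T$.

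For the two ingredients I would invoke known results in the literature. Ingham's fourth moment asymptotic (see e.g.\ \cite[Chapter VII]{Ti}) gives $\int_{-T}^T |\zeta(\tfrac{1}{2}+it)|^4 dt \ll T (\log T)^4$, and Heath--Brown's twelfth moment bound gives $\int_{-T}^T |\zeta(\tfrac{1}{2}+it)|^{12} dt \ll T^2 (\log T)^{17}$. For $c>0$ the same bounds hold (they are if anything smaller), either by the standard convexity/Carlson-type argument applied to each moment, or by noting that Heath--Brown's proof goes through uniformly for $\sigma \geq \tfrac{1}{2}$ since it is based on fourth-moment inputs together with a large values estimate, both of which are available uniformly in a neighborhood of the critical line. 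Symmetry $\zeta(\tfrac{1}{2}+c-it) = \overline{\zeta(\tfrac{1}{2}+c+it)}$ reduces integration over $[-T,T]$ to $[0,T]$.

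The only subtlety is the uniformity in $c \in [0, 1/100]$ for the twelfth moment, but this range is so narrow that one can simply apply Heath--Brown's argument verbatim on the shifted line, or alternatively use Plancherel/Perron type manipulations to reduce to $c=0$ at the cost of a harmless constant. Plugging both bounds into the Cauchy--Schwarz inequality immediately yields the claimed estimate $T^{3/2}(\log T)^{21/2}$. I do not expect any genuine obstacle; the only thing to be careful about is citing the precise uniform version of each moment bound.
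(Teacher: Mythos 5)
Your proposal is correct and follows essentially the same route as the paper: Cauchy--Schwarz between Ingham's fourth moment bound $T(\log T)^4$ and Heath--Brown's twelfth moment bound $T^2(\log T)^{17}$, with the shift $c>0$ handled by a convexity argument (the paper applies convexity to the eighth moment itself on $c\in(0,1)$ and uses the trivial bound for $c\geq 1$, a cosmetic difference from applying it to each moment separately).
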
 
\begin{proof}
Let us first consider the case $c = 0$. By the Cauchy-Schwarz inequality
\[
\int_{-T}^T |\zeta(1/2+it)|^8 dt \ll \left(\int_{-T}^T |\zeta(1/2+it)|^4 dt\right)^{1/2} \left(\int_{-T}^T |\zeta(1/2+it)|^{12} dt\right)^{1/2}
\]
Applying upper bounds for the fourth and twelfth power moments of the Riemann zeta function (see e.g.~\cite[formula (7.6.3)]{Ti} for the fourth moment and see~\cite{H-B12th} for the twelfth moment), this is 
\[
\ll \left(T (\log T)^4\right)^{1/2} \cdot \left(T^2 (\log T)^{17}\right)^{1/2},
\]
and the claim follows in case $c = 0$. In case $c \geq 1$ the left hand side of~\eqref{eq:zeta8th} is trivially bounded by $O(T)$. In the remaining case $c \in (0, 1)$ the claim follows from a convexity argument (see~\cite[Section 7.8]{Ti}).
\end{proof}

Let us here also record orthogonality relations for characters. There and later when $\pm$ appears only on one side of an equation, both options are summed.
\begin{lemma} \label{lem:orthogonal} If $m, n$ are integers with $(mn, q) = 1$ then
$$ \sumstar_{\chiq} \chi(m) \cb(n) =  \sum_{\substack{q = dr \\ r | (m - n)}} \mu(d) \phi(r),$$
and
$$ \sumb_{\chiq} \chi(m) \cb(n) = \frac{1}{2} \sum_{\substack{q = dr \\ r | (m \pm n)}} \mu(d) \phi(r).$$
\end{lemma}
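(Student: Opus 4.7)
My plan is to derive both formulas from the standard orthogonality relation
$$ \sum_{\chi \pmod q} \chi(m)\cb(n) = \phi(q)\, \mathbf{1}_{m \equiv n \pmod q}, $$
valid for $(mn,q)=1$, via Möbius inversion over the divisor lattice of $q$.

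For the first identity I will group the characters modulo $q$ by their primitive inducer. Every $\chi \pmod q$ is induced by a unique primitive $\chi^*$ of some modulus $r \mid q$, and because $(mn,q)=1$ the values agree: $\chi(m)\cb(n) = \chi^*(m)\overline{\chi^*}(n)$. Setting $S(q) := \sum_{\chi \pmod q} \chi(m)\cb(n)$ and $S^*(r) := \sumstar_{\chi \pmod r} \chi(m)\cb(n)$, this decomposition reads $S(q) = \sum_{r \mid q} S^*(r)$, which Möbius-inverts to $S^*(q) = \sum_{q = dr} \mu(d)\, S(r)$. Since $(mn,r)=1$ for every $r \mid q$, the standard orthogonality gives $S(r) = \phi(r)\,\mathbf{1}_{r \mid (m-n)}$, and substituting yields the claimed formula.

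For the second identity I will use the indicator of even characters $\mathbf{1}_{\chi(-1)=1} = \tfrac{1}{2}(1 + \chi(-1))$ together with $\chi(-1)\chi(m) = \chi(-m)$ to write
$$ \sumb_{\chi \pmod q} \chi(m)\cb(n) = \tfrac{1}{2}\sumstar_{\chi \pmod q} \chi(m)\cb(n) + \tfrac{1}{2}\sumstar_{\chi \pmod q} \chi(-m)\cb(n). $$
Applying the first identity to each summand, and noting that replacing $m$ by $-m$ converts the condition $r \mid (m-n)$ into $r \mid (-m-n)$, i.e.\ $r \mid (m+n)$, merges the two cases into the single $r \mid (m \pm n)$ appearing on the right-hand side.

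The argument is essentially bookkeeping with Möbius inversion, and there is no real obstacle. The only point deserving a little care is verifying that the coprimality hypothesis $(mn,q)=1$ descends to every divisor $r \mid q$, so that both the standard orthogonality and the identification of induced-character values with primitive ones apply without caveats; this is immediate.
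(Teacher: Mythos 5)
Your proof is correct and follows exactly the paper's route: Möbius inversion over the lattice of conductors combined with the full orthogonality relation for the first identity, and detecting even characters with $\tfrac{1}{2}(1+\chi(-1))$ for the second. The paper states this argument in one line; your write-up just fills in the same details.
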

\begin{proof} 
The first claim follows from the orthogonality of all characters and M\"obius inversion, while the second claim follows from the first by detecting even characters with $\frac{1+\chi(-1)}{2}$.
\end{proof}

\section{The approximate functional equation and truncation}
Writing
$$G(s, t) := \Gamma^4\left(\tfrac{s}{2} + \tfrac{it}{2}\right)\Gamma^4\left(\tfrac{s}{2}-\tfrac{it}{2}\right)$$
we would like to find an approximation to
\[
\int_{-\infty}^{\infty} |\Lambda(\tfrac{1}{2}+it, \chi)|^8 dt = \int_{-\infty}^{\infty} G\left(\tfrac 12, t\right) L^4 \left( \tfrac{1}{2} + it, \chi\right)L^4\left(\tfrac{1}{2} - it,  \cb \right) dt.
\]
Note that, for $ \tRe (s) > 1,$ we have
$$  L^4 \left( s + it, \chi\right)L^4\left(s - it,  \cb \right) = \sum_{m, n = 1}^\infty \frac{\tau_4 (m) \tau_4 (n)}{m^s n^s} \chi(m) \cb(n) \left( \frac{n}{m}\right)^{it}.$$   

For stating an approximate functional equation for the eighth moment, we need the weight function $V\colon \mathbb{R}_+^3 \to \mathbb{C}$ defined by
\begin{equation}
\label{eqnV}
 V(\xi, \eta; \mu) := \int_{-\infty}^{\infty} \left(\frac{\eta}{\xi}\right)^{it} W\left( \frac{\xi\eta \pi^4}{\mu^4}, t\right) \> dt
\end{equation}
with $W \colon \mathbb{R}_+ \times \mathbb{C} \to \mathbb{C}$ defined by
\begin{equation}
\label{eqnW}
W(x, t) := \frac{1}{2\pi i} \int_{(1)} G(1/2+s, t) x^{-s} \frac{ds}{s}.
\end{equation}
For the short proof of the following proposition, see~\cite[Lemma 2.1]{CL8th} (but note that the definition of $P(\chi, t)$ there is missing a factor $(n/m)^{it}$).
\begin{lemma} \label{prop:fnceqnLambda} 
Let $\chi$ be an even primitive character $\pmod{q}$. Then
\begin{equation}
\label{eq::fnceqnLambda}
\int_{-\infty}^{\infty} |\Lambda(1/2+iy, \chi)|^8 dy = 2 \sum_{m, n = 1}^{\infty} \frac{\tau_4(m) \tau_4(n)}{\sqrt{mn}} \chi(m) \overline{\chi}(n) V(m, n ;q).
\end{equation}
\end{lemma}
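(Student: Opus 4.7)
The plan is to apply the standard device of a Mellin--Barnes contour shift combined with the functional equation~\eqref{eq:FE}. I would introduce the key integral
\[
I(\chi, t) := \frac{1}{2\pi i}\int_{(1)} \Lambda^4\bigl(\tfrac{1}{2}+s+it, \chi\bigr) \Lambda^4\bigl(\tfrac{1}{2}+s-it, \cb\bigr) \frac{ds}{s},
\]
and observe that on the line $\tRe s = 1$ the Dirichlet series for $L^4(\tfrac{1}{2}+s+it,\chi) L^4(\tfrac{1}{2}+s-it,\cb)$ converges absolutely, while the archimedean and conductor factors of the two completed $L$-functions combine to give exactly $G(\tfrac{1}{2}+s, t)(q/\pi)^{4s}$. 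Expanding, interchanging sum and integral, and using the definition~\eqref{eqnW}, one recognises
\[
I(\chi, t) \;=\; \sum_{m, n \geq 1} \frac{\tau_4(m)\tau_4(n)\chi(m)\cb(n)}{\sqrt{mn}} \Bigl(\tfrac{n}{m}\Bigr)^{it} W\!\Bigl(\frac{mn\pi^4}{q^4}, t\Bigr).
\]

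Next, I would shift the contour from $\tRe s = 1$ to $\tRe s = -1$. For $\chi$ a nontrivial primitive character, $\Lambda(\tfrac{1}{2}+s, \chi)$ is entire, so the only pole of the integrand in the strip is the simple pole of $1/s$ at $s=0$, whose residue equals $\Lambda^4(\tfrac{1}{2}+it, \chi) \Lambda^4(\tfrac{1}{2}-it, \cb) = |\Lambda(\tfrac{1}{2}+it, \chi)|^8$, where I used the identity $\overline{\Lambda(\tfrac{1}{2}+it, \chi)} = \Lambda(\tfrac{1}{2}-it, \cb)$. On the shifted contour I would substitute $s \mapsto -s$ and apply~\eqref{eq:FE} to each of the four $\Lambda$-factors; since $\varepsilon_\chi \varepsilon_{\cb} = |\varepsilon_\chi|^2 = 1$, the root numbers cancel and the integrand transforms back into itself, so the shifted integral equals $-I(\chi, t)$. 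Subtracting yields the self-dual identity
\[
|\Lambda(\tfrac{1}{2}+it, \chi)|^8 = 2\,I(\chi, t).
\]

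Finally, I would integrate both sides over $t \in \mathbb{R}$ and swap the $t$-integral with the sum over $m, n$; the inner integral $\int_{-\infty}^\infty (n/m)^{it} W(mn\pi^4/q^4, t) \,dt$ is by~\eqref{eqnV} precisely $V(m, n; q)$, yielding the claim. The only points requiring care are (i) the justification of shifting the contour, which follows from Stirling's formula~\eqref{eq:Stirling} giving exponential decay of $G(\tfrac{1}{2}+s, t)$ in $|\tIm s|$, and (ii) the interchange of sum and integral in both the $s$-variable and the $t$-variable, which follows from the same rapid vertical decay of $G$ together with the absolute convergence provided by $\tRe s = 1$. Neither of these is a genuine obstacle; the proof is essentially a bookkeeping exercise once the correct integral $I(\chi, t)$ is written down.
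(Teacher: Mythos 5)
Your proposal is correct and is essentially the standard argument that the paper invokes by citing \cite[Lemma 2.1]{CL8th}: the Mellin--Barnes integral $I(\chi,t)$, the residue at $s=0$, the substitution $s\mapsto -s$ together with the functional equation and $\varepsilon_\chi\varepsilon_{\overline{\chi}}=1$ to get $|\Lambda(\tfrac12+it,\chi)|^8=2I(\chi,t)$, and then integration in $t$ to produce $V(m,n;q)$. All the computations (the identification of the prefactor with $G(\tfrac12+s,t)(q/\pi)^{4s}$, the sign bookkeeping in the contour reversal, and the convergence justifications) check out.
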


The integration in $y$ gives rise to $V(m, n; q)$ on the right hand side of~\eqref{eq::fnceqnLambda} which makes the essential support of $m$ and $n$ sums more restricted; from the following lemma, we see that the main contribution comes from when $m, n$ are both at most $q^{2 + \varepsilon}.$

\begin{lemma} \label{lem:weightWV} The weight function $W(x, t)$ defined in (\ref{eqnW}) is a smooth function of $x \in (0, \infty)$. 
Furthermore the function $V(\xi, \eta; \mu)$ defined in (\ref{eqnV}) satisfies, for any $\xi, \eta, \mu > 0$ and any non-negative integers $\nu_1, \nu_2, \nu_3$,
\begin{equation}
\label{eq:Vbound} 
\frac{d^{\nu_1} d^{\nu_2} d^{\nu_3}}{d\xi^{\nu_1} d\eta^{\nu_2} d\mu^{\nu_3}} V(\xi, \eta; \mu) \ll_{\nu_1, \nu_2, \nu_3} \exp \left(-\left(\frac{\max(\xi, \eta)^2}{\mu^4} \right) ^{1/4}\right) \cdot \frac{1}{\xi^{\nu_1} \eta^{\nu_2} \mu^{\nu_3}}.
\end{equation}
\end{lemma}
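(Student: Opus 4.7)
First, smoothness of $W(\cdot,t)$ on $(0,\infty)$ is immediate from differentiating~\eqref{eqnW} under the integral sign: Stirling's formula~\eqref{eq:Stirling} shows that on $\Re s = 1$, $|G(1/2+s,t)| \ll (1+|\Im s|+|t|)^2 e^{-2\pi\max(|\Im s|, |t|)}$, so each $x$-derivative of the integrand retains absolute integrability.

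For the bound~\eqref{eq:Vbound}, my plan is to decouple the product $G(1/2+s,t) = \Gamma^4(1/4+s/2+it/2)\Gamma^4(1/4+s/2-it/2)$ via the change of variables $a = (s+it)/2$, $b = (s-it)/2$ in the iterated integral obtained by combining~\eqref{eqnV} and~\eqref{eqnW}. The Fubini step is justified by the joint Stirling bound above, and after substitution the contour $\{\Re s = 1\}\times \mathbb{R}_t$ maps to $\{\Re a = 1/2\}\times\{\Re b = 1/2\}$ with the Gamma factors recombining as $\Gamma^4(1/4+a)\Gamma^4(1/4+b)$ and the power factors recombining as $(\xi\pi^2/\mu^2)^{-2a}(\eta\pi^2/\mu^2)^{-2b}$, giving (up to sign)
\[
V(\xi,\eta;\mu) = \frac{\pm 1}{\pi}\int_{(1/2)}\int_{(1/2)}\Bigl(\tfrac{\xi\pi^2}{\mu^2}\Bigr)^{-2a}\Bigl(\tfrac{\eta\pi^2}{\mu^2}\Bigr)^{-2b}\Gamma^4\bigl(\tfrac{1}{4}+a\bigr)\Gamma^4\bigl(\tfrac{1}{4}+b\bigr)\,\frac{da\,db}{a+b}.
\]

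Next, I shift the $a$- and $b$-contours independently to $\Re a = A_a/2$, $\Re b = A_b/2$ with $A_a, A_b \geq 1$ (no poles of $\Gamma^4(1/4+\cdot)$, which lie at $-1/4-n$, are crossed). Applying Stirling in the form $|\Gamma^4(1/4+A_a/2+i\alpha)| \ll (1+|\alpha|)^{2A_a-1}e^{-2\pi|\alpha|}$, the standard estimate $\int_\mathbb{R}(1+|\alpha|)^{2A-1}e^{-2\pi|\alpha|}\,d\alpha \ll A^{-1/2}(A/(\pi e))^{2A}$ (Stirling for $\Gamma(2A)$), and $|a+b| \geq A_a+A_b$ on the shifted contours, I obtain
\[
|V(\xi,\eta;\mu)| \ll \Bigl(\tfrac{\xi\pi^2}{\mu^2}\Bigr)^{-A_a}\Bigl(\tfrac{\eta\pi^2}{\mu^2}\Bigr)^{-A_b}\Bigl(\tfrac{A_a}{\pi e}\Bigr)^{2A_a}\Bigl(\tfrac{A_b}{\pi e}\Bigr)^{2A_b}\frac{1}{(A_aA_b)^{1/2}(A_a+A_b)}.
\]
The optimal choices $A_a = \pi^2\sqrt{\xi}/\mu$ and $A_b = \pi^2\sqrt{\eta}/\mu$ (or $A_a, A_b = 1$ if smaller, which handles the small-$\xi$, small-$\eta$ range) yield $|V(\xi,\eta;\mu)| \ll \exp(-2\pi^2(\sqrt{\xi}+\sqrt{\eta})/\mu)$; the required bound then follows since $\sqrt{\xi}+\sqrt{\eta}\geq \sqrt{\max(\xi,\eta)}$ and $2\pi^2>1$.

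For the derivative statement I differentiate the decoupled representation under the integral: each $\partial_\xi$ multiplies the integrand by $-2a/\xi$, each $\partial_\eta$ by $-2b/\eta$, and each $\partial_\mu$ by $4(a+b)/\mu$. After $\nu_1+\nu_2+\nu_3$ differentiations the extra factor is a polynomial in $a,b,a+b$ of degrees $\nu_1,\nu_2,\nu_3$ times $\xi^{-\nu_1}\eta^{-\nu_2}\mu^{-\nu_3}$; on the optimal contours $|a|, |b|, |a+b|$ are all $O(\sqrt{\max(\xi,\eta)}/\mu + 1)$, so the polynomial factor is absorbed into the exponential decay at the cost of a slightly smaller constant in the exponent. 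The main technical obstacle I anticipate is the careful justification of Fubini and the contour change at the initial step, which requires applying Stirling jointly in $\Im s$ and $t$; once this is settled, everything else is a routine Mellin--Barnes optimization.
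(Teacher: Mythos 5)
Your argument is correct in its essentials but takes a genuinely different route from the paper. The paper keeps $s$ and $t$ coupled: it substitutes $z=it$, rotates the $z$-line to $\tRe z=-c$ so that the combined power of $(\eta/\xi)^{it}$ and $(\xi\eta\pi^4/\mu^4)^{-s}$ collapses to $(\pi\max(\xi,\eta)^{1/2}/\mu)^{-4c}$, and then applies Stirling to $G$ on the resulting two-dimensional contour with the single choice $c=\max(\xi,\eta)^{1/2}/\mu$. Your substitution $a=(s+it)/2$, $b=(s-it)/2$ instead factors the integrand into a product of two independent Mellin--Barnes integrals which you optimize separately; this is cleaner, yields the slightly stronger decay $\exp(-c'(\sqrt{\xi}+\sqrt{\eta})/\mu)$, and your treatment of the derivatives (each $\partial_\xi$, $\partial_\eta$, $\partial_\mu$ inserting a factor $-2a/\xi$, $-2b/\eta$, $4(a+b)/\mu$, absorbed into the exponential decay) is sound. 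The Jacobian and constant $\pm1/\pi$, the image contour $\{\tRe a=\tfrac12\}\times\{\tRe b=\tfrac12\}$, and the absence of poles of $\Gamma^4(\tfrac14+\cdot)$ and of $1/(a+b)$ under the rightward shifts all check out.

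Two quantitative points need repair. First, the pointwise bound $|\Gamma^4(\tfrac14+\tfrac{A_a}{2}+i\alpha)|\ll(1+|\alpha|)^{2A_a-1}e^{-2\pi|\alpha|}$ is false for $|\alpha|\lesssim A_a$: at $\alpha=0$ the left-hand side is $\Gamma(\tfrac14+\tfrac{A_a}{2})^4$, which by Stirling is of size $(A_a/(2e))^{2A_a}$ up to polynomial factors, while the right-hand side is $O(1)$, and no implied constant uniform in $A_a$ can rescue this. The fix is to use Stirling in the form \eqref{eq:Stirling}, which gives $\int_{\mathbb{R}}|\Gamma(\tfrac14+\tfrac{A}{2}+i\alpha)|^4\,d\alpha\ll A^{O(1)}(A/(2e))^{2A}$; the optimization then lands at $A_a\asymp\sqrt{\xi}/\mu$ with final decay $\exp(-(4\pi+o(1))\sqrt{\xi}/\mu)$ rather than $\exp(-2\pi^2\sqrt{\xi}/\mu)$ --- still far more than the $\exp(-\sqrt{\max(\xi,\eta)}/\mu)$ required, so the conclusion survives. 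Second, taking $A_a=A_b=1$ does not actually handle the small-argument range: it gives $\ll\mu^4/(\pi^4\xi\eta)$, which is not $O(1)$ when $\xi\eta\ll\mu^4$; one must either shift to small positive real parts and accept a harmless $(\mu^2/\xi)^{\varepsilon}(\mu^2/\eta)^{\varepsilon}$ loss, or cross the pole at $s=0$ to extract the bounded term $\int(\eta/\xi)^{it}G(\tfrac12,t)\,dt$. (The paper's own ``$c=2$, the result follows immediately'' has the same blind spot when $\max(\xi,\eta)\ll\mu^2$, so this is a shared, fixable omission rather than a defect peculiar to your argument.)
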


\begin{proof} The proof is essentially the same as \cite[Proof of Lemma 1]{CIS} but for completeness we prove~\eqref{eq:Vbound} here. Without loss of generality, we can assume that $\eta \geq \xi$. By definition, for any $c > 0$,
\[
 V(\xi, \eta; \mu) = \int_{-\infty}^\infty \left(\frac{\eta}{\xi}\right)^{it} \frac{1}{2\pi i} \int_{(c)} G(1/2+s, t) \left(\frac{\xi\eta \pi^4}{\mu^4}\right)^{-s} \frac{ds}{s} \> dt.
 \]
Substituting $it = z$, we obtain
\[
V(\xi, \eta; \mu) = - \frac{1}{2\pi} \int_{-i\infty}^{i\infty} \left(\frac{\eta}{\xi}\right)^{z} \int_{(c)} G(1/2+s, -iz) \left(\frac{\xi\eta \pi^4}{\mu^4}\right)^{-s} \frac{ds}{s} \> dz.
\]
We move the $z$-integration to the line $\tRe z = -c$. Writing $z = -c+it$ and $s = c+iu$, we obtain
\[
V(\xi, \eta; \mu) = \frac{1}{2\pi} \int_{-\infty}^{\infty} \left(\frac{\eta}{\xi}\right)^{-c+it} \int_{-\infty}^\infty G(1/2+c+iu, t - ic) \left(\frac{\xi\eta \pi^4}{\mu^4}\right)^{-c-iu} \frac{du}{c+iu} \> dt.
\]
Taking derivatives, we see that
\begin{align*}
\frac{d^{\nu_1} d^{\nu_2} d^{\nu_3}}{d\xi^{\nu_1} d\eta^{\nu_2} d\mu^{\nu_3}} V(\xi, \eta; \mu) &\ll_{\nu_1, \nu_2, \nu_3} \frac{1}{\xi^{\nu_1} \eta^{\nu_2} \mu^{\nu_3}} \int_{-\infty}^{\infty} \int_{-\infty}^\infty |G(1/2+c+iu, t - ic)| \left(\pi \frac{\eta^{1/2}}{\mu}\right)^{-4c} \\
& \qquad \cdot |t+u|^{\nu_1} |2c+i(u-t)|^{\nu_2} |c+iu|^{\nu_3} \frac{du}{|c|+|u|} \> dt.
\end{align*}
When $\eta^{1/2}/\mu \leq 10$ we take $c = 2$ and the result follows immediately. Otherwise we choose $c = \eta^{1/2}/\mu > 10$. Then, by Stirling's formula~\eqref{eq:Stirling}, 
\begin{equation}
\label{eq:Gpicbound}
\begin{split}
&|G(1/2+c+iu, t + ic)| \left(\pi \frac{\eta^{1/2}}{\mu}\right)^{-4c} = \left|\Gamma\left(\frac{1}{4} +i \frac{u+t}{2}\right)^4 \Gamma\left(\frac{1}{4} + c +i \frac{u-t}{2}\right)^4\right| (\pi c)^{-4c} \\
& \ll \exp(-|u+t|)\frac{\left|\frac{1}{4} + c + i \frac{u-t}{2}\right|^{4c}}{(e \pi c)^{4c}} \exp\left(-2 |u-t| \arctan\left(\frac{|u-t|}{2c+1/2}\right)\right) 
\end{split}
\end{equation}
If $|u-t| \leq 2c$, then~\eqref{eq:Gpicbound} is
\[
\ll \exp(-|u+t|)\frac{(2c)^{4c}}{(e\pi c)^{4c}} \ll \exp(-|u+t| - 4c) \ll \exp(-|u+t| - |u-t| - 2c).
\]
On the other hand if $|u-t| > 2c$, then~\eqref{eq:Gpicbound} is
\[
\begin{split}
&\ll \exp(-|u+t|)\frac{\left|u-t\right|^{4c}}{(e\pi c)^{4c}} \exp\left(-2 |u-t| \cdot \frac{3}{4}\right) \\
&\ll \exp(-|u+t|-|u-t|) \exp\left(-c \left(4\log (e\pi) + \frac{1}{2} \cdot \frac{|u-t|}{c} - 4 \log\frac{|u-t|}{c}\right)\right) \\
&\ll \exp(-|u+t|-|u-t| -2c). 
\end{split}
\]
Hence in any case
\[
\frac{d^{\nu_1} d^{\nu_2} d^{\nu_3}}{d\xi^{\nu_1} d\eta^{\nu_2} d\mu^{\nu_3}} V(\xi, \eta; \mu) \ll_{\nu_1, \nu_2, \nu_3} \frac{e^{-c}}{\xi^{\nu_1} \eta^{\nu_2} \mu^{\nu_3}} \int_{-\infty}^{\infty} \int_{-\infty}^\infty \exp(-\tfrac{1}{2}(|u+t|+|u-t|)) du \> dt
\]
and the claim follows.
\end{proof}

Let us make some initial reductions to the left hand side of our claim~\eqref{eq:ThmClaim}. First, by Lemma \ref{prop:fnceqnLambda}
$$\sum_{q } \Psi\left(\frac{q}{Q}\right) \sumb_{\chiq} \int_{-\infty}^{\infty} \left| \Lambda\left( \tfrac{1}{2} + iy, \chi \right)\right|^8 \> dy = 2\Delta(\Psi, Q),$$
where
$$ \Delta(\Psi, Q) = \sum_{q} \Psi \left(\frac{q}{Q}\right) \sumb_{\chiq}  \sum_{m, n = 1}^{\infty} \frac{\tau_4(m) \tau_4(n)}{\sqrt{mn}} \chi(m) \overline{\chi}(n) V(m, n ;q).$$

The next step is to slightly truncate the sums over $m$ and $n$ in $\Delta(\Psi, Q)$. This truncation will allow us to apply the complementary divisor trick to reduce the conductor in Section \ref{secG}. For the truncation, let $\varepsilon_0 > 0$ and define
\begin{equation} \label{def:Q0}
Q_0 := \exp((\log Q)^{\varepsilon_0})
\end{equation}
and
 $$\dt (\Psi, Q) = \sum_{q} \sumb_{\chiq} \Psi \left(\frac{q}{Q}\right) \sum_{m, n = 1}^{\infty} \frac{\tau_4(m) \tau_4(n)}{\sqrt{mn}} \chi(m) \overline{\chi}(n) V\left(m, n ;\frac{q}{\lon}\right).$$ 
Note that we expect (and will later show) that $\Delta (\Psi, Q) \asymp Q^2(\log Q)^{16}.$ In this section we recall from~\cite{CL8th} that $\dt (\Psi, Q)$ is a sufficiently close approximation to $\Delta (\Psi, Q)$. 
This type of procedure has previously appeared in the contexts of moments also in other situations (see for instance the works of Soundararajan \cite{SoundDiriMoment} and Soundararajan and Young \cite{SY}). 

\begin{prop} \label{prop:truncation} With the above notation 
$$ {\Delta}(\Psi, Q) - \dt\left(\Psi, Q \right) \ll_{\alpha} Q^2 (\log Q)^{15+\varepsilon_0}.$$
\end{prop}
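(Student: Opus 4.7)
The strategy is to express the difference $\Delta(\Psi,Q) - \widetilde{\Delta}(\Psi,Q)$ as a contour integral using the Mellin definitions \eqref{eqnV} and \eqref{eqnW}, and then bound it via moment estimates. Writing out the difference one obtains
\[
V(m,n;q) - V(m,n;q/Q_0) = \int_{-\infty}^{\infty}\left(\frac{n}{m}\right)^{it}\frac{1}{2\pi i}\int_{(1)} G\bigl(\tfrac{1}{2}+s,t\bigr)\left(\frac{mn\pi^4}{q^4}\right)^{-s}\frac{1-Q_0^{-4s}}{s}\,ds\,dt.
\]
Interchanging summations (justified on $\tRe s = 1$) and recognizing the resulting Dirichlet series as fourth powers of $L$-functions, we get
\[
2\bigl(\Delta(\Psi,Q)-\widetilde{\Delta}(\Psi,Q)\bigr) = \int_{-\infty}^{\infty}\!dt\,\frac{1}{2\pi i}\int_{(1)} G\bigl(\tfrac{1}{2}+s,t\bigr)\frac{1-Q_0^{-4s}}{s}\,\mathcal{L}(s,t)\,ds,
\]
where
\[
\mathcal{L}(s,t) := \sum_q\Psi(q/Q)\,(q/\pi)^{4s}\sumb_{\chiq} L^4\bigl(\tfrac{1}{2}+s+it,\chi\bigr)L^4\bigl(\tfrac{1}{2}+s-it,\cb\bigr).
\]

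Since every summed character is non-trivial (as $q \sim Q$ is large on the support of $\Psi$), the relevant $L$-functions are entire, and the apparent pole of $(1-Q_0^{-4s})/s$ at $s=0$ is removable with value $4\log Q_0$. I would therefore shift the contour to $\tRe s = \alpha := (\log Q)^{-1}$, so that $|(q/\pi)^{4s}|$ remains $O(1)$, while $|(1-Q_0^{-4s})/s| \leq \min(4\log Q_0,\,2/|s|)$. Stirling's formula (as in the proof of Lemma~\ref{lem:weightWV}) yields exponential decay of $G(\tfrac{1}{2}+s,t)$ in $\max(|t|,|\tIm s|)$ outside a bounded region, restricting the effective range of the integration in $s$ and $t$. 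Applying AM-GM as in $|L^4(\chi)L^4(\cb)| \leq \tfrac{1}{2}(|L(\chi)|^8+|L(\cb)|^8)$ reduces the problem to bounding an 8th-moment average on unit-length $t$-windows, which can be estimated via Proposition~\ref{prop:8momentIntandSumoverq}.

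The main obstacle is that a direct application of Proposition~\ref{prop:8momentIntandSumoverq} on a unit $t$-interval gives only the generic bound $Q^2(\log Q)^{16}$, which exceeds the target $Q^2(\log Q)^{15+\varepsilon_0}$ by a factor of $(\log Q)^{1-\varepsilon_0}$. The missing saving must come from the narrow support of $V(m,n;q)-V(m,n;q/Q_0)$: by Lemma~\ref{lem:weightWV}, this difference is essentially supported on $\max(m,n) \in [(q/Q_0)^2,\,q^2(\log q)^C]$. Over this narrow range, the divisor sum $\sum\tau_4(m)^2/m$ is of size only $(\log q)^{15}\log Q_0 = (\log Q)^{15+\varepsilon_0}$, in contrast to the $(\log q)^{16}$ one gets over the full range $m \leq q^2$. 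To extract this saving rigorously, I would dyadically decompose the $m, n$ sums into this narrow range, apply the large sieve (Lemma~\ref{lem:largesieve}) to Dirichlet polynomials truncated accordingly, and separate the $m,n$ sums via Cauchy--Schwarz; this closely follows the analogous argument in~\cite{CL8th} (whose proof does not require GRH at this step).
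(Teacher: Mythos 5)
Your final argument---restricting to the narrow range where $V(m,n;q)-V(m,n;q/Q_0)$ is not negligible, noting that this range comprises only $\ll \log Q_0 \ll (\log Q)^{\varepsilon_0}$ dyadic blocks each contributing $\ll Q^2(\log Q)^{15}$ via the large sieve---is exactly the paper's proof, which simply invokes the corresponding argument from \cite{CL8th} with $(\log Q)^\alpha$ replaced by $Q_0$. Your opening contour-integral/moment approach is a detour that you yourself correctly diagnose as losing a factor $(\log Q)^{1-\varepsilon_0}$ before pivoting to the right mechanism, so the proposal is essentially the same as the paper's.
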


\begin{proof} This follows as~\cite[Proof of Proposition 3.1]{CL8th}. The key ingredient in the proof is the large sieve. In~\cite{CL8th} one has $(\log Q)^\alpha$ in place of $Q_0$ but the same proof works by replacing every occurrence of $(\log Q)^\alpha$ by $Q_0$ --- our choice of $Q_0$ is sufficiently small since in the critical case of~\cite[Proof of Proposition 3.1]{CL8th} where $Q^2/Q_0^2 \leq M, N \leq Q^2$ the parameters $M$ and $N$ are each summed over $\ll \log Q_0 \ll (\log Q)^{\varepsilon_0}$ dyadic intervals. 
\end{proof}

\section{Splitting $\dt\left(\Psi, Q \right)$}
Now, by Proposition~\ref{prop:truncation} and orthogonality of characters (see Lemma \ref{lem:orthogonal}), it is sufficient to consider
$$\dt\left(\Psi, Q\right) = \h \sum_{m, n=1}^\infty \frac{\tau_4(m) \tau_4(n)}{\sqrt{mn}}\sum_{\substack{d, r\\(dr, mn) = 1 \\ r|m\pm n}} \phi(r) \mu(d) \Psi\bfrac{dr}{Q} V\left(m, n, \frac{dr}{\lon}\right).$$
Let $D = (\log Q)^{\Delta_0}$ for some $\Delta_0$ to be determined later and split 
\[
\dt\left(\Psi, Q\right) = \D  + \Smo + \mathcal{O}(\Psi, Q),
\]
where the diagonal term $\D$ consists of the terms with $m=n$, the term $\Smo$ consists of the remaining terms with $d>D$ and $\mathcal{O}(\Psi, Q)$ consists of the remaining terms with $d \leq D$. 

The first two terms $\D$ and $\Smo$ were handled in~\cite[Propositions 4.1 and 5.1]{CL8th} that gave the following asymptotic formulas.
\begin{lemma}
Let $\varepsilon > 0$ and let $\widetilde{\Psi}$ be the Mellin transform of $\Psi,$ which is defined by \begin{equation} \label{eqn:MellinPsi}
 \widetilde{\Psi} (s) = \int_{0}^\infty \Psi(u) u^s \frac{du}{u}.
\end{equation}
Then 
\[
\begin{split}
\D &= 2^{16}Q^2 \frac{(\log Q )^{16}}{16!} \widetilde{\Psi}(2) \frac{\mathcal{A}}{2} \prod_p\left( 1 - \frac{1}{p}\right)\left(1 + \frac{1}{\mathcal{B}_p} \left( \frac{1}{p} - \frac{1}{p^2} - \frac{1}{p^3}\right)\right) \int_{-\infty}^{\infty} G(1/2, t)dt \\
&\qquad \qquad + O_{\varepsilon}(Q^2(\log Q)^{15+\varepsilon}),
\end{split}
\]
where 
\[
\mathcal{B}_p = \sum_{r = 0}^{\infty} \frac{\tau_4^2(p^r)}{p^{r}} \quad \text{and} \quad \mathcal{A} = \prod_p \mathcal{B}_p \left(1 - \frac{1}{p} \right)^{16}.
\]
\end{lemma}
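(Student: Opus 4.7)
My plan is to compute $\D$ by a standard Mellin inversion / Dirichlet series analysis: use the Mellin representation of $V$ to introduce a complex parameter $s$, evaluate the $m$-sum as a product of $\zeta(1+2s)^{16}$ with a nice Euler product, extract the main residue at $s=0$, and then finish with a second Mellin inversion over $q = dr$. Write $\D = \mathcal{D}_0 + \mathcal{D}_1$ according to the two choices $r \mid m-n$ and $r \mid m+n$ in Lemma~\ref{lem:orthogonal}; with $m = n$ and $(r,m) = 1$, the first is automatic while the second forces $r \in \{1, 2\}$. Thus $\mathcal{D}_1$ runs over only $O(1)$ values of $r$ and the same analysis as below (with $r$ restricted to $\{1,2\}$) yields $\mathcal{D}_1 \ll Q^2 (\log Q)^{15+\varepsilon}$, so only $\mathcal{D}_0$ produces a main term.

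For $\mathcal{D}_0$, insert
\[
V(m, m; dr/\lon) = \frac{1}{2\pi i} \int_{(1)} G_1(s) \Big(\frac{dr}{\pi \lon}\Big)^{4s} m^{-2s} \frac{ds}{s}, \qquad G_1(s) := \int_{-\infty}^{\infty} G(\tfrac{1}{2}+s, t)\, dt,
\]
and swap with the $m$-sum. Using the multiplicativity of $\tau_4^2$ and pulling out the primes dividing $q := dr$, one finds
\[
\sum_{(m, q)=1} \frac{\tau_4(m)^2}{m^{1+2s}} = \zeta(1+2s)^{16}\, H(s; q), \qquad H(s; q) := \prod_p (1-p^{-1-2s})^{16} \mathcal{B}_p(s) \cdot \prod_{p \mid q} \mathcal{B}_p(s)^{-1},
\]
with $\mathcal{B}_p(s) := \sum_{k \geq 0} \tau_4(p^k)^2/p^{k(1+2s)}$, so that $H(0; q) = \mathcal{A} \prod_{p \mid q} \mathcal{B}_p^{-1}$. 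Shift the $s$-contour to $\tRe s = -\delta$ for a small fixed $\delta > 0$; the integral on the shifted line contributes $\ll Q^{2-c\delta}$ by convexity bounds for $\zeta$ and Stirling decay of $G_1$, which is comfortably absorbed in the target error. The only residue crossed is at $s = 0$, a pole of total order $17$. Expanding $\zeta(1+2s)^{16} = (2s)^{-16}(1 + O(s))$ together with $(q/(\pi \lon))^{4s}$ as a Taylor series, the leading contribution to the residue is
\[
\frac{2^{16}(\log q)^{16}}{16!}\, G_1(0)\, \mathcal{A} \prod_{p \mid q} \mathcal{B}_p^{-1},
\]
while the lower-order polynomial-in-$\log q$ pieces produce at most $O(Q^2(\log Q)^{15+\varepsilon})$ after the $(d,r)$-summation.

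It then remains to evaluate
\[
\sum_{d,r} \mu(d) \phi(r) \Psi\Big(\frac{dr}{Q}\Big) (\log dr)^{16} \prod_{p \mid dr} \mathcal{B}_p^{-1} = \sum_q g(q)\, \Psi\Big(\frac{q}{Q}\Big)(\log q)^{16},
\]
where $g(q) := (\mu * \phi)(q) \prod_{p \mid q} \mathcal{B}_p^{-1}$. By Mellin inversion in $\Psi$, this equals $\frac{1}{2\pi i}\int_{(c)} \widetilde{\Psi}(w)\, Q^w\, G^{(16)}(w)\, dw$, where $G(w) := \sum_q g(q) q^{-w}$ has a simple pole at $w = 2$ (so $G^{(16)}$ has a pole of order $17$). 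The Euler product for $G(w)$ is $\zeta(w-1)/\zeta(w)^2$ times a correction factor; combining with $1/\zeta(2)^2 = \prod_p (1-p^{-2})^2$, a direct local computation using $(\mu*\phi)(p) = p-2$ and $(\mu*\phi)(p^k) = (p-1)^2 p^{k-2}$ for $k \geq 2$ shows the residue of $G(w)$ at $w = 2$ simplifies to $\prod_p (1-1/p)\big(1 + \mathcal{B}_p^{-1}(1/p - 1/p^2 - 1/p^3)\big)$. Shifting past $w = 2$, the leading contribution is $\widetilde{\Psi}(2) Q^2 (\log Q)^{16}$ times this product, with subleading terms absorbed in the error. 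Assembling all pieces (including the overall $1/2$ from Lemma~\ref{lem:orthogonal}) yields exactly the stated main term.

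The main obstacle will be the Euler product bookkeeping: verifying that the local factors coming from $H(0;q)$, from the Dirichlet series of $\mu * \phi$, and from $1/\zeta(2)^2$ combine cleanly into $\prod_p (1-1/p)(1 + \mathcal{B}_p^{-1}(1/p - 1/p^2 - 1/p^3))$; and tracking the many subleading polynomial-in-$\log$ terms produced by expanding at $s=0$ and $w=2$ to confirm they all sit below the threshold $Q^2(\log Q)^{15+\varepsilon}$.
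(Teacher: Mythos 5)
The paper itself gives no proof of this lemma: it is quoted verbatim from the companion paper \cite[Proposition 4.1]{CL8th}, so there is no in-text argument to compare against. Your outline is the standard contour-shift/Euler-product argument for such diagonal terms and, as far as I can check, it is correct and almost certainly the route taken in \cite{CL8th}. In particular: the split according to $r\mid m-n$ (automatic when $m=n$) versus $r\mid m+n$ (which with $(r,m)=1$ forces $r\in\{1,2\}$ and hence a contribution $\ll Q(\log Q)^{16}$) is right; the factorisation $\sum_{(m,q)=1}\tau_4(m)^2 m^{-1-2s}=\zeta(1+2s)^{16}H(s;q)$ with $H(0;q)=\mathcal{A}\prod_{p\mid q}\mathcal{B}_p^{-1}$ is right; and the key local identity
\[
\frac{p-2}{p^2}+\frac{(p-1)^2p^{-4}}{1-p^{-1}}=\frac{p-2}{p^2}+\frac{p-1}{p^3}=\frac{1}{p}-\frac{1}{p^2}-\frac{1}{p^3},
\]
which converts the residue at $w=2$ of $\sum_q(\mu*\phi)(q)\prod_{p\mid q}\mathcal{B}_p^{-1}q^{-w}$ into the stated Euler product, checks out. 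Two small points worth making explicit when writing this up: the Mellin kernel actually yields $(\log(q/(\pi Q_0)))^{16}$ rather than $(\log q)^{16}$, and since $\log Q_0=(\log Q)^{\varepsilon_0}$ the discrepancy is $O(Q^2(\log Q)^{15+\varepsilon_0})$, which fits the claimed error only because $\varepsilon_0$ may be taken at most $\varepsilon$; and the shift to $\tRe s=-\delta$ requires $\delta<1/4$ so that the corrective Euler product $\prod_p(1-p^{-1-2s})^{16}\mathcal{B}_p(s)$ still converges there.
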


\begin{lemma} \label{lem:sumSm1} Let $\varepsilon > 0$. There exists an absolute constant $C$ such that
$$ \Smo = \mathcal {MS}_1(\Psi, Q) + O_{\varepsilon}\left(\frac{Q^2 (\log Q)^{C}}{D^{1 - \varepsilon}}\right),$$
where 
\[
 \mathcal {MS}_1(\Psi, Q) := - \sum_{\substack{m, n = 1  \\ m \neq n}} \frac{\tau_4 (m) \tau_4 (n)}{\sqrt{mn}} \sum_{(q, mn) = 1} \Psi \left( \frac{q}{Q}\right)\left( \sum_{\substack{dr = q \\ d \leq D}} \mu(d) \right)  V\left(m, n ; \frac{q}{\lon} \right).
\]
\end{lemma}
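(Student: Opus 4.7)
My plan is to adapt the proof of the analogous Proposition~5.1 in~\cite{CL8th}, whose key tool is the hybrid large sieve (Lemma~\ref{lem:largesieve}). The first algebraic ingredient is the Möbius orthogonality $\sum_{d\mid q}\mu(d)=[q=1]$, which vanishes for $q\sim Q$, giving
\[
\sum_{\substack{d\mid q\\ d>D}}\mu(d) \;=\; -\sum_{\substack{d\mid q\\ d\leq D}}\mu(d);
\]
this explains the form of $\mathcal{MS}_1(\Psi, Q)$. The second ingredient is to unfold the factor $\phi(r)$ and the condition $r\mid m\pm n$ present in $\Smo$, using the identity $\phi(r)=\sum_{k\mid r}\mu(k)\,r/k$ together with the additive-character expansion $[r\mid m\pm n]=r^{-1}\sum_{a\,(\mathrm{mod}\,r)} e(a(m\pm n)/r)$.

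After these manipulations, $\Smo$ decomposes naturally into a principal piece, coming from the zero frequency $a=0$ combined with the leading term $k=1$ of the $\phi$-expansion, and a non-principal piece. Combined with the Möbius switch above, the principal piece matches $\mathcal{MS}_1(\Psi, Q)$ exactly, once one accounts for the factor $\tfrac12$ and the $\pm$ in $\Smo$, which reflect the even-character origin of $\dt$.

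The non-principal piece constitutes the error $\Smo-\mathcal{MS}_1(\Psi, Q)$. Converting additive characters back to Dirichlet characters modulo divisors of $r=q/d$ and splitting the $d$-sum dyadically, the resulting mean square is controlled by Lemma~\ref{lem:largesieve}. The essential support $m,n\ll Q^{2+\varepsilon}$ of the weight $V$ (from Lemma~\ref{lem:weightWV}) gives Dirichlet polynomials of this length with $\tau_4$-coefficients, whose mean square $\sum_{n\leq Q^{2+\varepsilon}} \tau_4(n)^2/n \ll (\log Q)^{16}$ is elementary, while the constraint $d>D$ reduces the effective conductor to $r<Q/D$. The conductor reduction delivers the saving $D^{1-\varepsilon}$ stated in the lemma.

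The main obstacle is the careful bookkeeping of the Möbius, totient, and coprimality $(dr,mn)=1$ weights throughout the decomposition, and matching the principal part exactly onto $\mathcal{MS}_1(\Psi, Q)$ with no residual constant; the $\pm$ symmetry and the factor $1/2$ must be unwound in concert with the additive-character expansion. Logarithmic losses from the Möbius inversions and from the large-sieve mean-square estimate all absorb into the exponent $C$ of the final error.
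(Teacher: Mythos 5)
The paper does not reprove this lemma: it is imported wholesale from \cite[Proposition 5.1]{CL8th}, and the shape of that argument is what is written out for the parallel Lemma~\ref{lem:sumSm2} here. Your skeleton (character expansion, principal/non-principal split, the M\"obius switch $\sum_{d>D}=-\sum_{d\leq D}$) is the right one, but two of your steps fail as described. First, the detour through additive characters and the expansion $\phi(r)=\sum_{k\mid r}\mu(k)r/k$ does not isolate the main term. For $(mn,r)=1$ one has the exact identity
\[
\phi(r)\bigl(\mathbf{1}_{r\mid m-n}+\mathbf{1}_{r\mid m+n}\bigr)=2\sum_{\substack{\chi\ (\mathrm{mod}\ r)\\ \chi(-1)=1}}\chi(m)\overline{\chi}(n)
\]
(Lemma~\ref{lem:orthogonal} read backwards), so $\phi(r)$ is absorbed with no expansion and the principal character contributes exactly the weight $1$ appearing in $\mathcal{MS}_1(\Psi,Q)$. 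In your decomposition the complement of the ``$a=0$, $k=1$'' piece is not an oscillating error: the terms with $a=0$, $k>1$ contribute $(\phi(r)/r-1)$ times the free $m,n$-sum, and the frequencies $a$ with $(a,r)>1$ give Ramanujan-sum contributions that do not average to zero over $m,n$ coprime to $r$. Both are of main-term size, so they cannot be pushed into $O(Q^2(\log Q)^C/D^{1-\varepsilon})$; you would have to regroup by $(a,r)$ and reconstruct Dirichlet characters via Gauss sums, i.e.\ redo the multiplicative orthogonality the hard way.

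Second, and more seriously, your error estimate does not close. After separating variables, the non-principal contribution for $r\sim R\leq 2Q/d$ involves Dirichlet polynomials $\sum_{m\leq Q^{2+\varepsilon}}\tau_4(m)\chi(m)m^{-1/2-it}$, and Lemma~\ref{lem:largesieve} applied to these gives $(R^2T+Q^{2+\varepsilon})\sum_n\tau_4(n)^2/n$. The term $Q^{2+\varepsilon}(\log Q)^{16}$ is independent of $D$: the polynomial length is dictated by the support of $V$, hence by the original modulus $q=dr\asymp Q$, not by the reduced conductor $r$, so ``reducing the conductor to $r<Q/D$'' saves nothing in the diagonal term of the large sieve. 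The missing ingredient is an approximate functional equation at the reduced conductor --- Ramachandra's identity (Lemma~\ref{le:RamId}) --- which shortens the relevant polynomials to length $\asymp(rT)^2$ and converts the large sieve bound into $\ll R^2T^2(\log(QT))^{16}$; this is precisely Proposition~\ref{prop:8momentIntandSumoverq}, and only then does $\sum_{d>D}(Q/d)^2$ deliver the stated saving $D^{1-\varepsilon}$. This is exactly how the analogous bound is obtained in the proof of Lemma~\ref{lem:sumSm2}.
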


In~\cite{CL8th} the remaining term $\mathcal{O}(\Psi, Q)$ was treated by the complementary divisor trick. In doing this, the first step is to replace the arithmetic factor $\phi(r)$ by a smooth function which can be done by writing $\phi(r) = \sum_{a l = r} \mu(a) l$ so that
\[
\mathcal{O}(\Psi, Q) = \h \sum_{\substack{m, n=1 \\ m \neq n}}^\infty \frac{\tau_4(m) \tau_4(n)}{\sqrt{mn}}\sum_{\substack{d \leq D, a, l\\(dal, mn) = 1 \\ al | m\pm n}} \mu(a) \mu(d) l \Psi\bfrac{adl}{Q} V\left(m, n, \frac{adl}{\lon}\right).
\]
It turns out that we can only use the complementary divisor trick when $a$ is not too large (this is ultimately due to Proposition~\ref{prop:8momentIntandSumoverq} involving a $q$-average, as explained in Remark~\ref{rem:aleqA} below). Accordingly we write
\[
\begin{split}
\mathcal{O}(\Psi, Q) &= \Smtwo + \mathcal{G}(\Psi, Q),
\end{split}
\] 
where $\Smtwo$ consists of the terms with $a > A := \exp((\log Q)^{\varepsilon_0/2})$ 
and $\G$ consists of the terms with $a \leq A$.

To handle $\Smtwo$, we prove the following lemma. 
\begin{lemma} \label{lem:sumSm2} Let $\varepsilon > 0$. One has
$$ \Smtwo = \mathcal {MS}_2(\Psi, Q) + O_{\varepsilon}\left(\frac{Q^2 \exp(2(\log Q)^{\varepsilon/3})}{A^{\varepsilon/3}}\right),$$
where 
\[
\mathcal {MS}_2(\Psi, Q) := \sum_{\substack{m, n = 1  \\ m \neq n}} \frac{\tau_4 (m) \tau_4 (n)}{\sqrt{mn}} \sum_{(q, mn) = 1} \Psi \left( \frac{q}{Q}\right)\left( \sum_{\substack{adl = q \\ a > A, d \leq D}} \frac{\mu(a) \mu(d) l}{\phi(al)} \right)  V\left(m, n ; \frac{q}{\lon} \right).
\]
\end{lemma}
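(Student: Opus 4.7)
The plan is to detect the congruence $al \mid m \pm n$ via a sum over even characters modulo $al$ (Lemma~\ref{lem:orthogonal}), isolate the principal character contribution as $\mathcal{MS}_2(\Psi, Q)$, and bound the remaining non-principal character contribution using Mellin inversion combined with Proposition~\ref{prop:8momentIntandSumoverq}.

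Concretely, for $(al, mn) = 1$, Lemma~\ref{lem:orthogonal} yields
\[
\sum_{\pm} \mathbf{1}_{al \mid m \pm n} = \frac{2}{\phi(al)} \sum_{\substack{\chi \pmod{al} \\ \chi \text{ even}}} \chi(m) \overline{\chi}(n),
\]
so the $\chi = \chi_0$ contribution (combined with the $\tfrac{1}{2}$ prefactor inherited by $\Smtwo$) produces exactly the $1/\phi(al)$ weight defining $\mathcal{MS}_2(\Psi, Q)$. Set $\mathcal{E} := \Smtwo - \mathcal{MS}_2(\Psi, Q)$ for the non-principal contribution; the residual condition $(d, mn) = 1$ is dispatched via Möbius inversion on $d$, and imprimitive characters $\chi \pmod{al}$ are reduced to primitive inducing characters $\chi^*$ of conductor $c \mid al$ at a cost of at most $\tau(al/c)^8 \ll (al)^\varepsilon$ from the extra Euler factors.

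For each such primitive non-principal $\chi^*$, Mellin inversion on $V(m, n; \mu)$---using $V(m, n; \mu) = \int_t (n/m)^{it} W(mn\pi^4/\mu^4, t) dt$ together with the definition of $W$---expresses the inner $(m, n)$-sum as
\[
\frac{1}{2\pi i} \int_t \int_{(\sigma)} G\!\left(\tfrac{1}{2}+s, t\right) \left(\frac{\mu}{\pi}\right)^{4s} L\!\left(\tfrac{1}{2}+s-it, \chi^*\right)^4 L\!\left(\tfrac{1}{2}+s+it, \overline{\chi^*}\right)^4 \frac{ds}{s}\, dt,
\]
where $\mu = adl/Q_0$. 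Shifting to $\tRe s = \sigma := (\log Q)^{\varepsilon/3 - 1}$ crosses no poles since $\chi^*$ is non-principal, and $(\mu/\pi)^{4\sigma} \ll \exp(4(\log Q)^{\varepsilon/3})$. Stirling's decay of $G$ (cf.~\eqref{eq:Stirling}) localises both the $t$- and $\Im s$-integrations to ranges of size $O(1)$, and $|L^4 L^4| \leq \tfrac{1}{2}(|L|^8 + |L|^8)$ reduces matters to averages of $\int_{|u| \ll 1} |L(\tfrac{1}{2} + \sigma + iu, \chi^*)|^8 du$.

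Setting $b := al$ and $w(b) := \sum_{a \mid b,\, a > A} \mu(a)(b/a)$, the remaining task is to bound
\[
\sum_{d \leq D} \sum_{b \sim Q/d} \frac{|w(b)|}{\phi(b)} \sum_{\chi \pmod b} \int_{|u| \ll 1} |L(\tfrac{1}{2}+\sigma+iu, \chi)|^8 \, du.
\]
Using $1/a \leq A^{-\varepsilon/3}/a^{1-\varepsilon/3}$ for $a > A$ gives $|w(b)|/\phi(b) \leq A^{-\varepsilon/3} (b/\phi(b)) \prod_{p\mid b}(1 + p^{-1+\varepsilon/3})$; interchanging the sums over $b$ and the primitive conductors $c \mid b$, applying Proposition~\ref{prop:8momentIntandSumoverq} dyadically in $c$, and using the Mertens-type average $\sum_{b \leq B} (b/\phi(b))\prod_{p\mid b}(1+p^{-1+\varepsilon/3}) \ll B(\log B)^{O(1)}$ yields the claimed bound, with the $d$-sum costing only $\sum_d 1/d^2 \ll 1$. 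The main obstacle is precisely this extraction of the $A^{-\varepsilon/3}$ saving: Proposition~\ref{prop:8momentIntandSumoverq} is only averaged (not pointwise) in $q$, so one cannot naively pull out $\max_b |w(b)|/\phi(b)$---instead one must pair the multiplicative upper bound on $|w(b)|/\phi(b)$ with the moment bound via careful reordering of summation over $b$ and $c$.
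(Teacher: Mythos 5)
Your proposal follows essentially the same route as the paper: orthogonality to isolate the principal character as $\mathcal{MS}_2(\Psi,Q)$, Mellin inversion of $V$ into $L(\cdot,\chi)^4L(\cdot,\overline{\chi})^4$ just to the right of the half-line, and---crucially---regrouping the $(a,l)$-sum as a sum over the modulus $r=al$ with the uniform multiplicative bound $\sum_{a\mid r,\,a>A}1/\phi(a)\ll A^{-\varepsilon/3}\exp((\log Q)^{\varepsilon/3})\log Q$ so that the $q$-averaged Proposition~\ref{prop:8momentIntandSumoverq} applies, which is exactly the paper's~\eqref{eq:aphibound}. Two bookkeeping points: the diagonal $m=n$ must be reinserted (at a cost $O(Q^2(\log Q)^{16}/A)$) before the inner sum factors into $L$-functions, and you should shift only to $\tRe s=1/\log Q$ rather than $(\log Q)^{\varepsilon/3-1}$, since your factor $(\mu/\pi)^{4\sigma}\ll\exp(4(\log Q)^{\varepsilon/3})$ by itself already overshoots the stated $\exp(2(\log Q)^{\varepsilon/3})$.
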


\begin{proof}
The proof mostly follows the proof of~\cite[Proposition 5.1]{CL8th} (i.e. proof of Lemma~\ref{lem:sumSm1}).
Recall that
$$\Smtwo = \h \sum_{\substack{m, n=1 \\ m \neq n}}^\infty \frac{\tau_4(m) \tau_4(n)}{\sqrt{mn}}\sum_{\substack{a > A, d \leq D, l\\(dal, mn) = 1 \\ al | m\pm n}} \mu(a) \mu(d) l \Psi\bfrac{adl}{Q} V\left(m, n, \frac{adl}{\lon}\right).
$$
We express the condition $al | m \pm n$ using the even characters $\chi \pmod{al}.$ Hence
\[ 
\Smtwo = \sum_{\substack{a > A \\ d \leq D, l}} \frac{\mu(a) \mu(d) l}{\phi(al)} \Psi\bfrac{adl}{Q} \sum_{\substack{\chi \pmod{al} \\ \chi(-1) = 1}}  \sum_{\substack{m, n=1 \\ m \neq n \\(mn, d) = 1 }}^\infty \frac{\chi(m)\overline{\chi}(n)\tau_4(m) \tau_4(n)}{\sqrt{mn}}\V\left(m, n, \frac{adl}{\lon}\right).
\]
The principal character $\chi = \chi_0$ gives the claimed main term, so we can concentrate on the contribution of the non-principal characters. 

Reintroducing the terms with $m=n$ leads to (here our treatment is simpler than that in~\cite[Proof of Proposition 5.1]{CL8th}) an error at most
\[
\sum_{n=1}^\infty \frac{\tau_4^2(n)}{n}\sum_{\substack{a > A \\ d \leq D, l \\ (dal, n) = 1}} l \Psi\bfrac{adl}{Q} \left| \V\left(n, n, \frac{adl}{\lon}\right)\right|.
\]
Recalling the support of $\Psi$ and Lemma~\ref{lem:weightWV} this is
\[
\sum_{n \leq Q^3} \frac{\tau_4^2(n)}{n} \sum_{A < a \leq 2Q} \sum_{d \leq D} \sum_{l \leq \frac{2Q}{ad}} l + O\left(\frac{1}{Q}\right) \ll \frac{Q^2 (\log Q)^{16}}{A}.
\]
Hence it suffices to bound the sum
\begin{align*}
\mathcal{S}_2'(\Psi, Q) &= \sum_{\substack{a > A \\ d \leq D, l}} \frac{\mu(a) \mu(d) l}{\phi(al)} \Psi\bfrac{adl}{Q} \sum_{\substack{\chi \pmod{al} \\ \chi(-1) = 1 \\ \chi \neq \chi_0}}  \sum_{\substack{m, n=1 \\(mn, d) = 1 }}^\infty \frac{\chi(m)\overline{\chi}(n)\tau_4(m) \tau_4(n)}{\sqrt{mn}}\V\left(m, n, \frac{adl}{\lon}\right)
\end{align*}
Arguing similarly to the proof of Proposition 5.1 in \cite{CL8th} but with $al$ in place of $r$, with not deducing to primitive characters, and with an additional factor $\mu(a)l/\phi(al) \ll \log Q/\phi(a)$, we get that
\begin{equation}
\label{eq:S2'bound}
\begin{split}
\mathcal{S}_2'(\Psi, Q) &\ll_{\varepsilon} (\log Q) \sum_{d \leq D} d^{\varepsilon} \int_{-\infty}^{\infty} \int_{-\infty}^{\infty} \exp(-|t_1| - |t_2|)  \sum_{a > A} \frac{1}{\phi(a)} \sum_{\substack{l \leq \frac{2Q}{ad} }}  1 \\
& \cdot \sum_{\substack{\chi \ \textrm{mod} \ al \\ \chi(-1) = 1 \\ \chi \neq \chi_0}} \left\{ \left| L\left(\tfrac{1}{2}+ \tfrac{1}{\log Q} + it_1,  \chi\right)\right|^8 + \left|L\left(\tfrac{1}{2}+ \tfrac{1}{\log Q} + it_2,  \overline{\chi}\right)\right|^8 \right\} \> dt_1 \> dt_2.
\end{split}
\end{equation}
Now, writing $al = r$,
\[
\sum_{a > A} \frac{1}{\phi(a)} \sum_{\substack{l \leq 2Q/(ad) }} 1 \leq \sum_{r \leq 2Q/d}  \sum_{\substack{a \mid r \\ a > A}} \frac{1}{\phi(a)}.
\]
Here
\begin{equation}
\label{eq:aphibound}
\begin{split}
\sum_{\substack{a \mid r \\ a > A}} \frac{1}{\phi(a)} &\ll \frac{\log Q}{A^{\varepsilon/3}} \sum_{a \mid r} \frac{1}{a^{1-\varepsilon/3}} \ll \frac{\log Q}{A^{\varepsilon/3}} \prod_{p \mid r} \left(1+\frac{1}{p^{1-\varepsilon/3}}\right) \\ &\ll \frac{\log Q}{A^{\varepsilon/3}} \prod_{p \leq \log Q} \left(1+\frac{1}{p^{1-\varepsilon/3}}\right) \ll \frac{\log Q}{A^{\varepsilon/3}} \exp((\log Q)^{\varepsilon/3}),
\end{split}
\end{equation}
Using this and Proposition~\ref{prop:8momentIntandSumoverq} in~\eqref{eq:S2'bound}, we get 
\begin{align*}
\mathcal{S}_2'(\Psi, Q) &\ll_{\varepsilon} \frac{\exp(\frac 32(\log Q)^{\varepsilon/3})}{A^{\varepsilon/3}} \sum_{d \leq D} d^\varepsilon \max_{R \leq \frac{2Q}{d}} \int_{-\infty}^{\infty} \exp(-|t_1|) \\
& \hskip 1.5in \cdot  \sum_{r \sim R}  \sum_{\substack{\chi \textrm{mod} \ r \\ \chi(-1) = 1}} \left| L\left(\tfrac{1}{2}+ \tfrac{1}{\log Q} + it_1,  \chi\right)\right|^8 \> dt_1 \\
&\ll_{\varepsilon} \frac{\exp(\frac 32(\log Q)^{\varepsilon/3})}{A^{\varepsilon/3}} \sum_{d \leq D} d^\varepsilon \max_{R \leq \frac{2Q}{d}} R^2 (\log Q)^{16} \ll \frac{Q^2\exp(2(\log Q)^{\varepsilon/3})}{A^{\varepsilon/3}} , 
\end{align*}
as claimed. 
\end{proof}

\begin{rem}
The bound~\eqref{eq:aphibound} is the reason we need to make a rather large choice like $A = \exp((\log Q)^{\varepsilon_0/2})$ which in turn in the treatment of $\mathcal{G}(\Psi, Q)$ will force a choice like $Q_0 = \exp((\log Q)^{\varepsilon_0})$. 
\end{rem}
Combining the previous two lemmas we obtain
\begin{prop} \label{prop:sumS} Let $\varepsilon > 0$. There exists an absolute constant $C$ such that
$$ \Smo + \Smtwo = \mathcal {MS}(\Psi, Q) +  O_{\varepsilon}\left(\frac{Q^2 (\log Q)^{C}}{D^{1 - \varepsilon}} + \frac{Q^2 \exp(2(\log Q)^{\varepsilon/3})}{A^{\varepsilon/3}}\right),$$ 
where 
\begin{equation}
\label{eq:MSdef}
\mathcal{MS}(\Psi, Q) := - \sum_{\substack{m, n = 1  \\ m \neq n}} \frac{\tau_4 (m) \tau_4 (n)}{\sqrt{mn}} \sum_{(q, mn) = 1} \Psi \left( \frac{q}{Q}\right)\left( \sum_{\substack{adl = q \\ a \leq A, d \leq D}} \frac{\mu(a) \mu(d) l}{\phi(al)} \right)  V\left(m, n ; \frac{q}{\lon} \right).
\end{equation}
\end{prop}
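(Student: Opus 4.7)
The plan is to add the asymptotic formulas furnished by Lemmas~\ref{lem:sumSm1} and~\ref{lem:sumSm2} and verify that the two explicit main terms $\mathcal{MS}_1(\Psi, Q)$ and $\mathcal{MS}_2(\Psi, Q)$ combine, after one short arithmetic identity, into $\mathcal{MS}(\Psi, Q)$ as defined in~\eqref{eq:MSdef}. The two displayed error terms then add directly, giving the bound claimed in the proposition; hence there is essentially no analytic work to be done beyond invoking these two lemmas.

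First I would write out $\mathcal{MS}_1(\Psi, Q) + \mathcal{MS}_2(\Psi, Q)$ as
\[
\sum_{\substack{m,n \geq 1 \\ m \neq n}} \frac{\tau_4(m)\tau_4(n)}{\sqrt{mn}} \sum_{(q,mn)=1} \Psi\!\left(\frac{q}{Q}\right) \left[ -\sum_{\substack{dr=q \\ d \leq D}} \mu(d) \,+\, \sum_{\substack{adl=q \\ a > A,\, d \leq D}} \frac{\mu(a)\mu(d)\,l}{\phi(al)} \right] V\!\left(m,n;\frac{q}{\lon}\right),
\]
and compare it with $\mathcal{MS}(\Psi, Q)$. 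The equality $\mathcal{MS}_1(\Psi, Q) + \mathcal{MS}_2(\Psi, Q) = \mathcal{MS}(\Psi, Q)$ thus reduces to the purely arithmetic statement
\[
\sum_{\substack{adl=q \\ d \leq D}} \frac{\mu(a)\mu(d)\,l}{\phi(al)} = \sum_{\substack{dr=q \\ d \leq D}} \mu(d) \qquad (q \geq 1),
\]
i.e.\ to the assertion that recombining the $a \leq A$ and $a > A$ pieces back into one unrestricted $a$-sum reproduces the simpler arithmetic factor appearing in $\mathcal{MS}_1(\Psi, Q)$.

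The key auxiliary fact is the M\"obius identity
\[
\sum_{al=r} \frac{\mu(a)\,l}{\phi(al)} = 1 \qquad (r \geq 1),
\]
which I would prove by observing that $\phi(al) = \phi(r)$ whenever $al = r$, so that the left hand side equals $\phi(r)^{-1} \sum_{a \mid r} \mu(a)(r/a) = 1$ by the standard M\"obius inversion of $r = \sum_{d \mid r} \phi(d)$. Applying this identity with $r = q/d$ collapses the inner $(a,l)$-sum in the previous display to $1$, leaving exactly $\sum_{d \mid q,\, d \leq D} \mu(d)$, and so establishes the desired equality of main terms. Combining the two error terms from Lemmas~\ref{lem:sumSm1} and~\ref{lem:sumSm2} then yields the bound in the proposition, with the constant $C$ inherited from Lemma~\ref{lem:sumSm1}. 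The only real obstacle in the argument is keeping track of this arithmetic identity correctly; no further analytic input is needed.
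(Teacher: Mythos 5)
Your proposal is correct and follows essentially the same route as the paper: the paper also reduces the proposition to combining Lemmas~\ref{lem:sumSm1} and~\ref{lem:sumSm2} via the M\"obius identity $\phi(r)=\sum_{al=r}\mu(a)\,l$ (which the paper inserts as $1=\sum_{r=al}\mu(a)l/\phi(r)$ into $\mathcal{MS}_1$, exactly your auxiliary fact). Your bookkeeping of the signs and of the splitting $a\le A$ versus $a>A$ is accurate, so nothing further is needed.
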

\begin{proof}
Writing the sum over $d$ in $\mathcal{MS}_1(\Psi, Q)$ as 
\[
\sum_{\substack{dr = q \\ d \leq D}} \mu(d) = \sum_{\substack{dr = q \\ d \leq D}} \mu(d) \sum_{r = a l} \frac{\mu(a) l}{\phi(r)} = \sum_{\substack{adl = q \\ d \leq D}} \frac{\mu(a) \mu(d) l}{\phi(al)}
\]
we see that
\[
\mathcal{MS}_1(\Psi, Q) + \mathcal{MS}_2(\Psi, Q) = \mathcal{MS}(\Psi, Q),
\]
so the claim follows from Lemmas~\ref{lem:sumSm1} and~\ref{lem:sumSm2}.
\end{proof}
Now we have asymptotic formulas for $\D$ and for $\Smo + \Smtwo$. In the following section we will turn our attention to $\G$.
\section{Treatment of $\G$}\label{secG}
\subsection{The complementary divisor}
Recall that
\begin{equation}
\label{eq:Gdefrep}
\G = \h \sum_{\substack{m, n=1 \\ m \neq n}}^\infty \frac{\tau_4(m) \tau_4(n)}{\sqrt{mn}}\sum_{\substack{a \leq A, d \leq D, l\\(dal, mn) = 1 \\ al | m\pm n}} \mu(a) \mu(d) l \Psi\bfrac{adl}{Q} V\left(m, n, \frac{adl}{\lon}\right).
\end{equation}
We write $g=(m, n)$ and $m=gM$, $n=gN$, so that $(M, N) = 1$. Note that the condition $(al, mn) = 1$ can be replaced by $(al, g) = 1$ since $al \mid m \pm n$. Necessarily $al \mid M \pm N$ and we write $|M\pm N| = alh$.  We want to replace the condition modulo $al$ with a condition modulo $ah$, which will be small when $l$ is large. To do so, we express the condition $(l, g) = 1$ by $\sum_{b|(l, g)} \mu(b)$.  Writing $l = bk$, the inner sum in~\eqref{eq:Gdefrep} becomes
$$\sum_{\substack{d\leq D\\(d, gMN) = 1}} \mu(d) \sum_{\substack{a \leq A \\(a, g) = 1}} \mu(a) \sum_{b|g}\mu(b) \sum_{\substack{k\geq 1\\|M\pm N| = abkh}}bk \Psi\bfrac{dabk}{Q} \V\left(gM, gN, \frac{dabk}{\lon }\right).
$$
Substituting $k=\frac{|M \pm N|}{abh}$ and rearranging, this equals
\begin{align} \label{eqn:stepbeforeW}
Q \sum_{\substack{d\leq D \\ (d, gMN) = 1}}  \sum_{\substack{a \leq A \\ (a, g)=1}} \sum_{b|g} \sum_{\substack{h > 0 \\ M \equiv \mp N \ ({\rm mod} \ abh )}} & \frac{\mu(a) \mu(b) \mu(d)}{ad} \\
& \cdot \left(\frac{d|M\pm N|}{Qh} \right)\Psi \bfrac{d|M \pm N|}{Qh} \V\left(gM, gN; \frac{d|M \pm N|}{h \lon }\right). \nonumber
\end{align}
For non-negative real numbers $u, x, y$ and for each choice of sign, we define
\begin{equation} \label{def:mathcalWpm}\W^\pm(x, y; u) = u|x \pm y|\Psi(u|x\pm y|) \V(x, y; u|x\pm y|).
\end{equation}
and
\[
\W(x, y; u) = \W^+(x, y; u) + \W^-(x, y; u)
\]
It is immediate from the definition of $\V(m, n; \mu)$ that, for any $c > 0$,
\begin{equation} \label{eqn:Vc}
\V(cm, cn; \sqrt c \mu) 
= \int_{-\infty}^\infty \bfrac{n}{m}^{it} W\left( \frac{mn\pi^4}{\mu^4}, t\right) dt  = \V (m, n; \mu).
\end{equation}
Thus 
\begin{align*}
\G &= \frac{Q}{2} \sum_{\substack{m, n=1 \\ m \neq n}}^\infty \frac{\tau_4(m) \tau_4(n)}{\sqrt{mn}} \sum_{\substack{d\leq D \\ (d, gMN) = 1}}  \sum_{\substack{a \leq A \\ (a, g)=1}} \sum_{b|g} \sum_{\substack{h > 0 \\ M \equiv \mp N \ ({\rm mod} \ abh )}}  \frac{\mu(a) \mu(b) \mu(d)}{ad} \\
& \qquad \qquad \cdot \W^\pm \left(\frac{gM \lotwo}{Q^2}, \frac{gN \lotwo}{Q^2} ; \frac{Qd}{gh \lotwo}\right). 
\end{align*}
Note that since $(M, N) = 1$, necessarily $(MN, abh) = 1.$ 

We express the condition $M \equiv \mp N (\ {\rm mod} \ abh)$ using characters $\chi$ (mod $abh$).  We then separate the principal character contribution, which is the main term. Specifically, we write
$$ \mathcal{G}(\Psi, Q) = \mathcal{MG}(\Psi, Q) + \mathcal{EG}(\Psi, Q),$$
where 
\begin{align} \label{eqn:maintermG}
\mathcal{MG}(\Psi, Q) := \frac{Q}{2} \sum_{\substack{m, n = 1\\ m \neq n}}^{\infty} & \frac{\tau_4(m) \tau_4(n)}{\sqrt{mn}}\sum_{\substack{d \leq D \\ (d, gMN) = 1}}  \sum_{\substack{a \leq A \\ (a, gMN)=1}} \sum_{\substack{b|g \\ (b, MN) = 1}} \sum_{\substack{h > 0 \\ (h, MN) = 1}}  \frac{\mu(a) \mu(b) \mu(d)}{ad \phi(abh)}\\
&\cdot  \W^\pm \left(\frac{gM \lotwo}{Q^2}, \frac{gN \lotwo}{Q^2} ; \frac{dQ}{gh \lotwo}\right), \nonumber
\end{align}
and
\begin{equation}
\label{eqn:errortermG}
\begin{split}
\mathcal{EG}(\Psi, Q) :=  \frac{Q}{2} \sum_{\substack{m, n = 1\\ m \neq n}}^{\infty} & \frac{\tau_4(m) \tau_4(n)}{\sqrt{mn}}\sum_{\substack{d \leq D \\ (d, gMN) = 1}}  \sum_{\substack{a \leq A \\ (a, gMN)=1}} \sum_{\substack{b|g \\ (b, MN) = 1}} \sum_{\substack{h > 0 \\ (h, MN) = 1}}  \frac{\mu(a) \mu(b) \mu(d) }{ad \phi(abh)} \\
&\cdot \sum_{\substack{\chi \ ({\rm mod} \ abh) \\ \chi \neq \chi_0}} \chi(M) \cb(\mp N) \W^\pm \left(\frac{gM \lotwo}{Q^2}, \frac{gN \lotwo}{Q^2} ; \frac{dQ}{gh \lotwo}\right).
\end{split}
\end{equation}
\subsection{Mellin transforms of $\W^\pm$} \label{sec:Mellin} 
To evaluate $\mathcal{MG}(\Psi, Q)$ and $\mathcal{EG}(\Psi, Q)$, we will write $\W^\pm(x, y; u)$ in terms of its Mellin transforms. We shall consider three different types of Mellin transforms. They come from taking Mellin transforms in the variable $u$ when we need to sum over the modulus $h$, the variables $x$ and $y$ when we need to sum over $M$ and $N$, and in all three variables when we need to sum over $M$, $N$, and $h$.  (In the description above, we have neglected to mention the conceptually less important sums over $d$, $a$, $b$ and $g$.)  

We collect the properties of the various Mellin transforms in the following three lemmas. The first lemma is from~\cite[Section 6.2]{CL8th} and the proof is the same as in \cite[Section 7]{CIS}, but using the bounds of Lemma \ref{lem:weightWV} in place of~\cite[Lemma 1]{CIS}. 
\begin{lemma} \label{lem:MellinU}
Given positive real numbers $x$ and $y$, define
$$ \Wt^\pm_1(x, y; z) = \int_0^\infty \W^\pm (x, y ; u) u^{z} \frac{du}{u}.$$
Then the functions $\Wt^\pm_1(x, y; z)$
are analytic for all $z \in \mathbb C$. For any $c \in \mathbb{R}$, we have the Mellin inversion formula
\[
 \W^\pm(x, y; u) = \frac{1}{2\pi i} \int_{(c)} \Wt^\pm_1(x, y; z) u^{-z} \> dz.
\]
For any non-negative integer $\nu$, any real numbers $x, y > 0$ and any $z \in \mathbb{C}$ one has
$$ |\Wt^\pm_1(x, y;z)| \ll_\nu |x \pm y|^{-\tRe z} \prod_{j = 1}^\nu |z + j|^{-1} \exp \left(-c_1 \max (x, y)^{1/4}\right)$$
for some absolute constant $c_1.$
\end{lemma}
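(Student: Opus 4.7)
The plan is to isolate the $|x\pm y|$-dependence by the scaling substitution $t = u|x\pm y|$ (under the harmless assumption $|x\pm y| > 0$; when $x = y$ in the $-$ case, $\W^-\equiv 0$ since $\Psi(0) = 0$, so the statement is vacuously true). This yields
\begin{equation*}
\Wt_1^\pm(x, y; z) = |x\pm y|^{-z}\int_{0}^{\infty}\Psi(t)\, V(x, y; t)\, t^{z}\,dt.
\end{equation*}
Since $\Psi$ is supported on $[1, 2]$ and $V(x, y;\cdot)$ is smooth on $(0, \infty)$ by Lemma~\ref{lem:weightWV}, the remaining integral runs over a compact set and defines an entire function of $z$, proving analyticity. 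The Mellin inversion formula then follows from the standard statement for smooth, compactly supported functions: as a function of $u$, $\W^\pm(x, y; u)$ is smooth and supported in $[1/|x\pm y|, 2/|x\pm y|]$, so inversion holds on any vertical line $\tRe z = c$.

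For the quantitative bound, the idea is to integrate by parts $\nu$ times in the remaining integral using $t^{z} = (z+1)^{-1}\frac{d}{dt}(t^{z+1})$, with the compact support of $\Psi$ killing all boundary terms. This produces
\begin{equation*}
\int_{0}^{\infty}\Psi(t)\, V(x, y; t)\, t^{z}\, dt = \frac{(-1)^\nu}{\prod_{j=1}^\nu(z+j)}\int_{1}^{2}\frac{d^\nu}{dt^\nu}\bigl[\Psi(t) V(x, y; t)\bigr]\, t^{z+\nu}\,dt,
\end{equation*}
contributing the claimed factor $\prod_{j=1}^\nu|z+j|^{-1}$. The bracketed derivative is estimated by the Leibniz rule: each $\mu$-derivative of $V(x, y; \mu)$ for $\mu \in [1, 2]$ is $O_\nu\!\bigl(\exp(-c\max(x, y)^{1/2})\bigr)$ by Lemma~\ref{lem:weightWV}, and since $\max(x, y)^{1/2}\geq\max(x, y)^{1/4}$ whenever $\max(x, y)\geq 1$ (with the bounded range absorbed into the implicit constant), this gives the stated decay $\exp(-c_1\max(x, y)^{1/4})$. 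Multiplying by $|x\pm y|^{-z}$ completes the proof.

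I do not expect any serious obstacle here; the argument is a direct calculation based on the scaling substitution and integration by parts, combined with the derivative estimates of Lemma~\ref{lem:weightWV}. The only care-points are verifying the compact $u$-support of $\W^\pm$ for Mellin inversion and checking via the Leibniz rule that the exponential decay from $V$ is preserved under higher $t$-derivatives; both follow transparently from the definitions.
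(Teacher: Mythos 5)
Your proof is correct and is essentially the argument the paper defers to (the lemma is quoted from \cite{CL8th} with the proof of \cite{CIS}, Section 7): rescale via $t=u|x\pm y|$ to pull out the factor $|x\pm y|^{-z}$, note the resulting integral over $t\in[1,2]$ is entire, and integrate by parts $\nu$ times against $t^z$, estimating $\frac{d^\nu}{dt^\nu}[\Psi(t)V(x,y;t)]$ by Leibniz and the bounds of Lemma~\ref{lem:weightWV}. The one (shared, harmless) imprecision is that your final integral carries $|t^{z+\nu}|\le 2^{\tRe z+\nu}$, so the bound as stated is uniform only for $\tRe z$ in a bounded range, which is all that is used in the paper ($\tRe z$ stays in $[-\varepsilon,1]$ throughout Section~\ref{sec:MsplusMg}).
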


The next lemma is similar to Lemma 6.3 in \cite{CL8th}, but we include a bound for the error resulting from truncating the integrals over $s_1$ and $s_2$. In both \cite{CL8th} and \cite{CIS}, the truncation was not explained. We include it here for completeness. 

\begin{lemma} \label{lem:MellinXYU} 

Define
$$ \Wt^\pm_3 (s_1, s_2; z) = \int_0^\infty \int_0^\infty \int_0^\infty \W^\pm(x, y ; u) u^z x^{s_1}y^{s_2} \frac{du}{u} \frac{dx}{x} \frac{dy}{y},$$
and 
$$ \Wt_3 (s_1, s_2; z) = \Wt^+_3 (s_1, s_2; z) + \Wt^-_3 (s_1, s_2; z).$$

Let $\omega = \frac{s_1 + s_2 - z}{2}$ and $\xi = \frac{s_1 - s_2 + z}{2}.$ For $\tRe(s_1), \tRe(s_2) > 0,$ and $|\tRe(s_1 -s_2)| < \tRe(z) < 1$ we have
\begin{equation} \label{eqn:mellinthree}
\Wt_3(s_1, s_2; z) = \frac{\psit(1 + 4\omega + z)}{2\omega \pi^{4\omega}} \int_{-\infty}^{\infty} \Hc (\xi - it, z) G\left(\h + \omega, t \right) \> dt, 
\end{equation}
where $\psit$ is defined in (\ref{eqn:MellinPsi}), and 
\begin{align}\label{eqn:Huv}
 \Hc (u, v) 
 &= \frac{\Gamma(u)\Gamma(v-u)}{\Gamma(v)}+ \frac{\Gamma(u) \Gamma(1-v)}{\Gamma(1+u-v)} + \frac{\Gamma(v-u) \Gamma(1-v)}{\Gamma(1-u)}  \\
&=\pi^{1/2} \frac{\Gamma\left(\tfrac{u}{2} \right)\Gamma\left(\tfrac{1-v}{2} \right)\Gamma\left(\tfrac{v-u}{2} \right)}{\Gamma\left(\tfrac{1-u}{2} \right)\Gamma\left(\tfrac{v}{2} \right)\Gamma\left(\tfrac{1-v + u}{2} \right)} \notag.
\end{align}

Let $x \neq y$ and $T \ge Q^{\varepsilon}$. For any $c_1, c_2 > 0$ with $ |c_1 - c_2| < c < 1$, one has the truncated Mellin inversion formulas
\begin{align}
\label{eqn:truncate}
\W(x, y ; u) &=\frac{1}{(2\pi i)^3} \int_{(c)} \int_{c_1 - iT}^{c_1 + iT}\int_{c_2 - iT}^{c_2 + iT} \Wt_3 (s_1, s_2 ; z) u^{-z} x^{-s_1} y^{-s_2} \>d s_2 \> d s_1 \> dz \\
\nonumber
&\qquad \qquad \qquad \qquad +  O\left(\frac{ u^{-c} x^{-c_1} y^{-c_2}}{ T^{1 - c}\left|\log\left( \frac xy\right) \right| }  \right).
\end{align}Moreover, let $\Wt_1(x, y ; z) = \Wt_1^+(x, y ; z) + \Wt_1^-(x, y ; z)$. Then for $\tRe z = c$,
\begin{align}
\label{eqn:truncate2}
\Wt_1(x, y ; z) &=\frac{1}{(2\pi i)^2} \int_{c_1 - iT}^{c_1 + iT}\int_{c_2 - iT}^{c_2 + iT} \Wt_3 (s_1, s_2 ; z) x^{-s_1} y^{-s_2} \>d s_2 \> d s_1 \> \\
\nonumber
&\qquad \qquad \qquad \qquad +  O\left(\frac{  x^{-c_1} y^{-c_2}}{ T^{1 - c}\left|\log\left( \frac xy\right) \right| }  \right).
\end{align}
Finally, the Mellin transform $\Wt_3(s_1, s_2; z)$ satisfies the bound
\begin{equation} \label{eqn:boundWt3}
|\Wt_3(s_1, s_2;z)| \ll (1 + |z|)^{-A} (1 + |\omega|)^{-A} (1 + |\xi|)^{\tRe(z) - 1}.
\end{equation}

\end{lemma}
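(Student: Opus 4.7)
The formula~\eqref{eqn:mellinthree} and the bound~\eqref{eqn:boundWt3} are essentially \cite[Lemma 6.3]{CL8th} (following \cite[Section 7]{CIS}), so my plan is to summarize those derivations and then focus attention on the truncated inversions~\eqref{eqn:truncate} and~\eqref{eqn:truncate2}, which are the genuinely new content of the lemma.

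For~\eqref{eqn:mellinthree}, I would start from the triple-integral definition of $\Wt_3^\pm(s_1, s_2; z)$, substitute $w = u|x\pm y|$ in the $u$-integral to extract a factor $|x\pm y|^{-z}$, and then insert the definitions~\eqref{eqnV} and~\eqref{eqnW} of $V$ and $W$. The $w$-integral against $\Psi$ collapses to $\widetilde{\Psi}(z + 4s + 1)$, where $s$ is the auxiliary Mellin variable from~\eqref{eqnW}. For the remaining $(x, y)$-integral, the change of variables $x = v\sigma$, $y = \pm v(1 - \sigma)$ (according to the sign $\pm$) separates a scale integral in $v$ from a beta-type integral in $\sigma$. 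The $v$-integral pins $s$ at $s = \omega = (s_1 + s_2 - z)/2$, eliminating the $s$-contour, and the $\sigma$-integral evaluates (after combining the two sign cases) to the three-term expression in~\eqref{eqn:Huv} for $\Hc(\xi - it, z)$ with $\xi = (s_1 - s_2 + z)/2$. The half-argument form of $\Hc$ follows from the Gauss multiplication and Euler reflection identities. The bound~\eqref{eqn:boundWt3} then follows by applying Stirling~\eqref{eq:Stirling} to the half-argument form of $\Hc$ (which yields the cleanest estimates on vertical lines), together with the rapid decay of $\widetilde{\Psi}$ on vertical strips and Stirling estimates for $G(\tfrac12 + \omega, t)$.

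For~\eqref{eqn:truncate}, I would start from the untruncated triple Mellin inversion of $\W$, which is absolutely convergent by~\eqref{eqn:boundWt3}. The error from restricting $\tIm s_1, \tIm s_2$ to $[-T, T]$ is the integral of $\Wt_3(s_1, s_2; z) x^{-s_1} y^{-s_2} u^{-z}$ over the complementary region. Changing variables to $(\omega, \xi)$ gives $x^{-s_1}y^{-s_2} = (xy)^{-\omega - z/2}(y/x)^{\xi - z/2}$, and the $\omega$ and $z$ integrals converge absolutely with rapid decay by~\eqref{eqn:boundWt3}; only the large $|\tIm \xi|$ regime needs work. In the $\xi$-integral, integration by parts once via $\log(y/x) \cdot (y/x)^{\xi} = \partial_\xi (y/x)^\xi$ yields a factor $|\log(x/y)|^{-1}$, and the tail bound $\int_T^\infty (1 + |\xi|)^{\tRe z - 1} \, d|\xi| \ll T^{c - 1}$ together with the sizes $u^{-c} x^{-c_1} y^{-c_2}$ produces the claimed error. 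The proof of~\eqref{eqn:truncate2} is essentially identical, with the $z$-integration absent.

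The main obstacle is the Mellin--Barnes calculation leading to~\eqref{eqn:mellinthree}: the $v$-integral arising there is formally divergent and must be interpreted via a contour shift that localizes $s$ at $\omega$; once this is done, the rest is standard beta integrals and Gamma identities. For the truncation, the principal subtlety is correctly tracking how the excised region $\{|\tIm s_1| > T\} \cup \{|\tIm s_2| > T\}$ maps into the $(\omega, \xi)$ coordinates, and verifying that the factor $|\log(x/y)|^{-1}$---which blows up on the diagonal $x = y$---honestly captures the failure of the integration-by-parts trick there.
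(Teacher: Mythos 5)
Your treatment of \eqref{eqn:mellinthree} and \eqref{eqn:boundWt3} (deferring to \cite[Lemma 6]{CIS} and sketching the Mellin--Barnes computation) matches the paper, which simply cites that lemma. The real content is the truncation, and there your argument has a genuine gap. You assert that the untruncated triple Mellin inversion of $\W$ ``is absolutely convergent by~\eqref{eqn:boundWt3}''; it is not. In the $(\omega,\xi)$ coordinates the bound \eqref{eqn:boundWt3} gives decay $(1+|\xi|)^{\tRe z-1}$ with $0<\tRe z=c<1$, so the $\xi$-integral fails to converge absolutely, and the ``tail bound'' you display, $\int_T^\infty(1+|\xi|)^{\tRe z-1}\,d|\xi|\ll T^{c-1}$, is a divergent integral: it would require $\tRe z<0$. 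Consequently the error of truncation is not the tail of an absolutely convergent integral, and the whole inversion must be interpreted as a symmetric limit. Your proposed repair --- one integration by parts in $\xi$ --- produces the correct boundary term $T^{c-1}/|\log(x/y)|$ but leaves a remainder integral involving $\partial_\xi\Wt_3$, on which \eqref{eqn:boundWt3} gives no control; to close the argument you would need the additional (true, but unproved in your sketch) fact that differentiating the Gamma-ratios in \eqref{eqn:Huv} gains a full power of $|\xi|$, because the relevant digamma differences are $O(1/|\xi|)$ rather than $O(\log|\xi|)$. You would also need to handle the geometry of the excised region $\{|\tIm s_1|>T\}\cup\{|\tIm s_2|>T\}$, where large $|\tIm s_i|$ can come from large $|\tIm\omega|$ rather than large $|\tIm\xi|$, and even there the $\xi$-integration is only conditionally convergent.

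The paper avoids all of this by never forming the untruncated $(s_1,s_2)$-integral. It starts from the four-fold representation \eqref{eq:Wint}, in which the only conditionally convergent integration is the auxiliary $w$-integral in the beta-integral identity \eqref{eqn:betaintegral} for $|x+y|^{-z}+|x-y|^{-z}$. The truncation is then done on the $w$-contour by a residue argument: the truncated vertical segment closed off by horizontal rays to $-\infty$ captures exactly the poles whose residues sum to $|1\pm x/y|^{-z}$, and the horizontal segments are estimated by Stirling as in \eqref{eqn:horizonalbdd}, which is where the factor $T^{-(1-c)}|\log(x/y)|^{-1}$ actually comes from (via $\int_{-\infty}^{b_3}(x/y)^{-\sigma}\,d\sigma$). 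Only afterwards does the paper change variables to $(s_1,s_2)$. If you want to salvage your route, you must first prove a derivative analogue of \eqref{eqn:boundWt3} with exponent $\tRe z-2$ in $\xi$ and define the untruncated inversion as a principal-value limit; otherwise the contour-deformation argument is the cleaner path.
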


\begin{proof} All claims but~\eqref{eqn:truncate} and \eqref{eqn:truncate2} are from \cite[Lemma 6]{CIS}.  By symmetry we can assume that $x < y$.  We note that by (50) - (52) in \cite{CIS} we have that for $0< b_3 < \tRe(z) < 1$, 
\begin{equation} \label{eqn:betaintegral}
|x + y|^{-z} + |x - y|^{-z} =  \frac{y^{-z}}{2\pi i} \int_{(b_3)} \mathcal H(w, z) \left( \frac{x}{y} \right)^{-w}  dw.
\end{equation}
\\
By \eqref{eqn:mellinthree}, Mellin inversion, and a change of variables,
\begin{align}
\label{eq:Wint}
 \W(x, y; u) 
&= \frac{1}{(2\pi i)^3} \int_{(b_1)} \int_{(c)} \int_{-\infty}^{\infty} u^{-z} \widetilde{\Psi}(1 + 4s + z) \int_{(b_3)} \mathcal H(w, z) \left( \frac{x}{y} \right)^{-w}  dw \\
\nonumber
 &\qquad \qquad \qquad \cdot  G\left(\frac 12 + s, t \right) \frac{y^{-s - z + it} x^{-s - it}}{\pi^{4s}}  \> dt \> dz \> \frac{ds}{s},
\end{align}and similarly

\begin{align}
\label{eq:Wint2}
 \Wt_1(x, y; z) 
&= \frac{1}{(2\pi i)^2} \int_{(b_1)} \int_{-\infty}^{\infty}  \widetilde{\Psi}(1 + 4s + z) \int_{(b_3)} \mathcal H(w, z) \left( \frac{x}{y} \right)^{-w}  dw \\
\nonumber
 &\qquad \qquad \qquad \cdot  G\left(\frac 12 + s, t \right) \frac{y^{-s - z + it} x^{-s - it}}{\pi^{4s}}  \> dt \>  \frac{ds}{s}.
\end{align}

We now give a proof of  the truncated Mellin inversion formula \eqref{eqn:truncate}, the proof of \eqref{eqn:truncate2} following by the same lines. 

Note that the integrals over $t$, $z$ and $s$ decay rapidly along vertical lines due to the factors $\widetilde{\Psi}$ and $G\left( \frac 12 + s, t\right)$. The integrals over $t, z$ and $s$ can thus be truncated to short integrals of length $T^{1/2} \geq Q^{\varepsilon/2}$, up to an error of 
\[
\frac{u^{-c} x^{-b_3-b_1} y^{-b_1-c+b_3}}{T^{100}}.
\] 

Now, consider $z$ fixed with $|\tIm z| \leq Q^{\varepsilon/2}$ and $x < y$. 
By Stirling's formula for Gamma functions in $\mathcal H(w,z)$, for $\tRe z = c$, we have that 
\begin{equation}\label{eqn:horizonalbdd}
\left| \left( \int_{b_3 + iT}^{-\infty + iT}+ \int_{-\infty + iT}^{b_3 + iT} \right) \mathcal H(w, z) \left( \frac xy\right)^{-w} \> dw\right| \ll \frac{1}{T^{1 - c}} \int_{-\infty}^{b_3} \left( \frac xy\right)^{-\sigma} \> d\sigma \ll \frac{x^{-b_3} y^{b_3}}{T^{1 - c} \left| \log \left( \frac xy \right)\right|}.
\end{equation}
In the above, whenever $\lambda = \nu \pm i\tau$ with $|\tau| \ge 100$ is not within the region allowed by Stirling's formula, we write $|\Gamma(\lambda)| = \frac{\pi}{|\Gamma(1-\lambda) \sin(\pi \lambda)|} \asymp \frac{1}{e^{\pi |\tau|} |\Gamma(1-\lambda)|}$, and apply Stirling's formula to $\Gamma(1-\lambda)$ instead.

Moreover, by \eqref{eqn:Huv}, 
\begin{align*}
&\left(\int_{b_3 - iT}^{ b_3 + iT} + \int_{b_3 + iT}^{-\infty + iT} + \int_{-\infty - iT}^{b_3 - iT}\right)\mathcal H(w, z) \left( \frac xy\right)^{-w} \> dw\\
&= \left(\int_{b_3 - iT}^{ b_3 + iT} + \int_{b_3 + iT}^{-\infty + iT} + \int_{-\infty - iT}^{b_3 - iT}\right)\left[\frac{\Gamma(w)\Gamma(z-w)}{\Gamma(z)}+ \frac{\Gamma(w) \Gamma(1-z)}{\Gamma(1+w-z)} + \frac{\Gamma(z-w) \Gamma(1-z)}{\Gamma(1-w)} \right] \left( \frac xy\right)^{-w} \> dw \\
&=  \left|1 + \frac xy \right|^{-z} + \left| 1 - \frac xy\right|^{-z},
\end{align*}upon summing the residues of the integrand within the region bounded by our contour of integration. 

This and \eqref{eqn:betaintegral} gives that
\begin{equation}
\left(\int_{b_3 - iT}^{ b_3 + iT} + \int_{b_3 + iT}^{-\infty + iT} + \int_{-\infty - iT}^{b_3 - iT}\right)\mathcal H(w, z) \left( \frac xy\right)^{-w} \> dw =  \frac{1}{2\pi i} \int_{(b_3)} \mathcal H(w, z) \left( \frac{x}{y} \right)^{-w}  dw
\end{equation}

Inserting the above and \eqref{eqn:horizonalbdd} into~\eqref{eq:Wint}, we obtain that, for $x < y$ and $0 < b_3 < c < 1,$ 
\es{\label{eqn:truncateWpm} \mathcal W(x, y; u) =  \ &\mathcal I (x, y, u; T)+  O\left( \frac{u^{-c} y^{-b_1 + b_3 - c} x^{-b_1 - b_3}}{ T^{1 - c}\left|\log\left( \frac xy\right) \right| } \right),}
where 
\est{\mathcal {I}(x, y, u ; T) = &\frac{1}{(2\pi i)^3} \int_{(b_1)} \int_{(c)} \int_{b_3 - iT/2}^{b_3 + iT/2} \int_{-\infty}^{\infty} u^{-z} \widetilde{\Psi}(1 + 4s + z) \mathcal H(w,z)  G\left(\frac 12 + s, t \right)\\
 &\hskip 1in \times \frac{y^{-s + it + w - z} x^{-s - it - w}}{\pi^{4s}}  \> dt \> dw \> dz \> \frac{ds}{s}}

Now we change the variable $s_1 = s + it + w$ and $s_2 = s - it - w + z$. Recall that $T \ge Q^{\varepsilon}.$ Let $\omega$ and $\xi$ be defined as in the statement of the lemma. We then have for $|c_1 - c_2| < c$
\est{ \mathcal W(x, y; u) = &\frac{1}{(2\pi i)^3}  \int_{(c)} \int_{c_1 - iT}^{c_1 + iT} \int_{c_2 - iT}^{c_2 + iT}  \widetilde{\W}_3(s_1, s_2; z) u^{-z} {y^{-s_2} x^{-s_1}}   \> ds_2 \> ds_1  \> dz \\
& \qquad \qquad +  O\left(Q^{-1000} + \frac{ u^{-c} y^{-c_2} x^{-c_1}}{ T^{1 - c}\left|\log\left( \frac xy\right) \right| }  \right)}
as desired. 
\end{proof}

The following lemma is of crucial importance when we remove the need for GRH --- in~\cite{CL8th, CIS} the decay in $|s_1+s_2|$ was not noticed or utilized.
\begin{lemma} \label{lem:MellinXY}
Given a positive real number $u,$ we define
$$ \Wt^\pm_2 (s_1, s_2; u) = \int_0^\infty \int_0^\infty \W^\pm(x, y ; u) x^{s_1}y^{s_2} \frac{dx}{x} \frac{dy}{y}.$$
Then the functions $\Wt^\pm_2(s_1, s_2 ; u)$ are analytic in the region $\tRe s_1, \tRe s_2 > 0$. We have the Mellin inversion formula
$$ \W^\pm(x, y ; u) = \frac{1}{(2\pi i)^2} \int_{(c_1)}\int_{(c_2)} \Wt^\pm_2 (s_1, s_2 ; u) x^{-s_1} y^{-s_2} \> d s_2 \>d s_1,$$
when $c_1$ and $c_2$ are positive. For any integers $k \geq 1, l \geq 0$ and any $s_1, s_2 \in \mathbb{C}$ with $0 < \tRe s_1, \tRe s_2 \leq 100$, one has
\begin{equation}
\label{eq:Wt2bound}
|\Wt^\pm_2(s_1, s_2; u)| \ll \frac{1}{\tRe s_1 \cdot \tRe s_2} \cdot \frac{(1 + u)^{k-1}}{\max\{|s_1|, |s_2|\}^k |s_1 + s_2|^l} \exp\left(-c' u^{-1/4}\right)
\end{equation}
for some constant $c' > 0$.
\end{lemma}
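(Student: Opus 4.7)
The statement bundles three assertions---analyticity of $\Wt_2^\pm(s_1,s_2;u)$ on $\{\tRe s_1,\tRe s_2>0\}$, the Mellin inversion formula, and the quantitative bound~\eqref{eq:Wt2bound}---which I will handle in that order. For the first two, the key observation is that, for each fixed $u>0$, the function $\W^\pm(\cdot,\cdot;u)$ is smooth on $(0,\infty)^2$, supported in $\{|x\pm y|\in[1/u,2/u]\}$ by virtue of the cut-off $\Psi(u|x\pm y|)$, bounded on that support, and satisfies the stretched-exponential decay $\ll\exp(-c\max(x,y)^{1/2})$ there, thanks to Lemma~\ref{lem:weightWV}. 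These three facts yield absolute convergence of the double Mellin integral whenever $\tRe s_1,\tRe s_2>0$; this gives analyticity, and a standard contour-shift argument then supplies the stated inversion formula for any positive $c_1,c_2$.

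For the bound~\eqref{eq:Wt2bound}, I would follow exactly the blueprint announced in the introduction around~\eqref{eq:hMellinBound}. By the symmetry $\W^\pm(x,y;u)=\W^\pm(y,x;u)$, I may assume $|s_1|\ge|s_2|$ so that $\max\{|s_1|,|s_2|\}=|s_1|$. The procedure is: first, integrate by parts $k$ times in $x$, producing the factor $1/\prod_{j=0}^{k-1}(s_1+j)\asymp |s_1|^{-k}$ and trading $\W^\pm$ for $\partial_x^k\W^\pm$; then substitute $y=xz$, which converts the weight $x^{s_1-1}y^{s_2-1}\,dx\,dy$ into $z^{s_2-1}x^{s_1+s_2-1}\,dx\,dz$; finally, integrate by parts a further $l$ times in $x$, producing the factor $1/|s_1+s_2|^l$ (valid for $|s_1+s_2|\ge 1$; when $|s_1+s_2|<1$ the case $l=0$ already delivers the claimed bound since $1/|s_1+s_2|^l\ge 1$). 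All boundary terms at $x=0$ vanish because $x^{s_1+j}\to 0$ there whenever $\tRe s_1>0$, while at $x=\infty$ they vanish by the rapid decay of $\W^\pm$ and its $x$-derivatives from Lemma~\ref{lem:weightWV}; the same holds at the endpoints in $z$.

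The main obstacle is then the bookkeeping required to bound $\partial_x^{k+l}[\W^\pm(x,xz;u)]$ via the chain rule and the derivative estimates of Lemma~\ref{lem:weightWV}. Each $x$-derivative that lands on $\Psi(u|x\pm xz|)$ or on the leading factor $u|x\pm xz|=ux|1\pm z|$ contributes at most $u(1+z)$, while each derivative landing on $V(x,xz;u|x\pm xz|)$ contributes $O(1/x+1/(xz)+1/(u|x\pm xz|))$; on the support $ux|1\pm z|\asymp 1$ these last three quantities are all $O(u(1+z))$. Combined with the width $\asymp 1/(u(1+z))$ of the support in $x$ and the trivial estimate $\int_0^\infty z^{\tRe s_2-1}(\cdots)\,dz\ll 1/\tRe s_2$ for the $z$-integral (the companion $1/\tRe s_1$ factor arising from the $x$-integration near $x\to 0^+$ before the change of variables), this yields the claimed $(1+u)^{k-1}$ dependence. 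Finally, the exponential $\exp(-c'u^{-1/4})$ is harvested from the small-$u$ regime: the support condition then forces $\max(x,y)\gtrsim 1/u$, and the stretched-exponential decay $\exp(-c\max(x,y)^{1/2})$ from Lemma~\ref{lem:weightWV} easily dominates the polynomial factor $x^{\tRe s_1}(xz)^{\tRe s_2}$ (since $\tRe s_i\le 100$), leaving a clean $\exp(-c'u^{-1/4})$ after an elementary estimate.
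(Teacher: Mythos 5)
Your proposal follows essentially the same route as the paper: reduce by symmetry to $|s_1|\ge|s_2|$, integrate by parts $k$ times in $x$, substitute $y=xz$, integrate by parts $l$ more times in the radial variable to harvest $|s_1+s_2|^{-l}$, and extract $\exp(-c'u^{-1/4})$ from the constraint $\max(x,y)\gg 1/u$ on the support. One bookkeeping point needs care, though: for the $-$ sign the support in $x$ (at fixed $z$) has width $\asymp 1/(u|1-z|)$, not $1/(u(1+z))$, so pairing your derivative bound $O(u(1+z))$ with that width would produce an uncontrolled factor $(1+z)/|1-z|$ near $z=1$; the clean statement (and the one the paper uses) is that each $x$-derivative of $u x|1\pm z|\,\Psi(ux|1\pm z|)\,V(x,xz;ux|1\pm z|)$ is $O(1/x)$ times a function of the same type, because the chain-rule factors $1$, $z$, $u|1\pm z|$ exactly cancel the $1/\xi$, $1/\eta$, $1/\mu$ losses in Lemma~\ref{lem:weightWV}, after which the $l$ integrations by parts cost nothing beyond $|s_1+s_2|^{-l}$.
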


\begin{proof}
Apart from~\eqref{eq:Wt2bound} the claims go back to~\cite{CIS} (and are easy to prove). In the proof of~\eqref{eq:Wt2bound} we can assume by symmetry that $|s_1| \geq |s_2|$. The proof is based on partial integration, and for this we first study the derivatives of $\W^\pm(x, y ; u)$. 

To simplify the notation, we write in this proof $V_1, V_2, \dotsc$ for unspecified functions $V_j \colon \mathbb{R}_+^3 \to \mathbb{C}$ that satisfy~\eqref{eq:Vbound} and $W_1^\pm, W_2^\pm, \dotsc$ for unspecified functions $W_j^\pm \colon \mathbb{R}_+^3 \to \mathbb{C}$ that are of the form $W_j^\pm(x, y; u) = u|x\pm y| \Psi(u|x \pm y|) V_j(x, y; u |x\pm y|)$ for some such $V_j$.

By Lemma~\ref{lem:weightWV} and the chain rule
\[
\frac{d}{dx} V(x, y; u |x\pm y|) = \frac{1}{x} V_1(x, y; u |x \pm y|) + u \cdot \frac{1}{u|x\pm y|} V_2(x, y; u |x \pm y|),
\]
so that in the region $u |x \pm y| \in [1, 2]$, we have, for any $j \geq 0$
\[
\frac{d^j}{dx^j} V(x, y; u |x\pm y|) = \left(\frac{1}{x^j} + u^j\right) V_3(x, y; u |x \pm y|)
\]
and
\[
\frac{d^j}{dx^j} \W^\pm(x, y; u |x\pm y|) = \left(\frac{1}{x^j} + u^j\right) W_1(x, y; u |x \pm y|)
\]
Hence by partial integration $k$ times, we see that 
\[
|\Wt^\pm_2(s_1, s_2; u)| \ll \frac{1}{|s_1|^{k}} \left|\int_0^\infty \int_0^\infty (1+ux)^{k} W^\pm_1(x, y ; u) x^{s_1}y^{s_2} \frac{dx}{x} \frac{dy}{y}\right|,
\]
Now we substitute $x = w$ and $y = wz$, so that
\begin{equation}
\label{eq:Wt2bound1}
|\Wt^\pm_2(s_1, s_2; u)| \ll \frac{1}{|s_1|^{k}} \left|\int_0^\infty \int_0^\infty (1+uw)^{k} W^\pm_1(w, wz ; u) w^{s_1+s_2} z^{s_2} \frac{dw}{w} \frac{dz}{z}\right|.
\end{equation}

Next we perform partial integration with respect to $w$. For this note that, by Lemma~\ref{lem:weightWV} and the chain rule,
\[
\begin{split}
\frac{d}{dw} V(w, wz, uw|1\pm z|) &= \frac{1}{w} V_5(w, wz, uw|1\pm z|) + z \cdot \frac{1}{wz} V_6(w, wz, uw|1\pm z|) \\
& \qquad + u|1\pm z| \frac{1}{uw|1\pm z|}V_7(w, wz, uw|1\pm z|) \\
&= \frac{1}{w} V_8(w, wz, uw|1\pm z|).
\end{split}
\]
Note also that 
\[
\frac{d}{dw} (1+uw)^k = \frac{1}{w} (1+uw)^k \cdot k \frac{uw}{1+uw}.
\]
Using similar bounds for higher order derivatives and considering other terms as well, we see that 
\[
\frac{d^l}{dw^l} \left((1+uw)^{k} W^\pm_1(w, wz ; u)\right) = \frac{1}{w^l} (1+uw)^{k} W^\pm_2(w, wz ; u).
\]
Hence, by applying partial integration $l$ times to the right hand side of~\eqref{eq:Wt2bound1}, we obtain 
\[
\begin{split}
|\Wt^\pm_2(s_1, s_2; u)| &\ll \frac{1}{|s_1|^k |s_1+s_2|^l} \Bigl| \int_0^\infty \int_0^\infty w^{s_1+s_2} z^{s_2} (1+uw)^{k} W^\pm_3(w, wz ; u) \frac{dz}{z} \frac{dw}{w}\Bigr|
\end{split}
\]
Substituting back $w = x$ and $z = y/x$, we see that the above integral is
\begin{equation}
\label{eq:W2estint}
\int_0^\infty \int_0^\infty x^{s_1} y^{s_2} (1+ux)^{k} W^\pm_3(x, y; u) \frac{dy}{y} \frac{dx}{x}.
\end{equation}
Recall that $k \geq 1$ and $\tRe s_i > 0$ and that $W^\pm_3(x, y; u)$ is supported on $u|x \pm y| \in [1, 2]$ and satisfies $W^\pm_3(x, y; u) \ll \exp\left(-2c\max\{x, y\}^{1/4}\right)$ for some $c > 0$.

In the part of~\eqref{eq:W2estint} with $x > 10/u$, we have $y \asymp x$ and $y$ is restricted to an interval of length $1/u$ (depending on $x$). Hence the contribution of $x > 10/u$ to~\eqref{eq:W2estint} is bounded by
\[
\begin{split}
&u^{k-1} \int_{10/u}^\infty x^{\tRe s_1 + \tRe s_2} x^{k} \exp(-2c x^{1/4}) \frac{dx}{x^2} \\
& \ll u^{k-1} \frac{1}{\tRe s_1 + \tRe s_2 + k -1} \left(\frac{1}{u^{\tRe s_1 + \tRe s_2 + k -1}}  + 1\right) \exp(-2c u^{-1/4}) \\
& \ll u^{k-1} \frac{1}{\tRe s_1 \cdot \tRe s_2} \exp(-c u^{-1/4}).
\end{split}
\]
In the part of~\eqref{eq:W2estint} with $x \leq 10/u$, we have $y \leq 12/u$ and $\max\{x, y\} \geq 1/(2u)$ (since $|x \pm y| \in [1/u, 2/u]$), so the contribution of this part is bounded by
\[
\begin{split}
&\int_0^{10/u} \int_0^{12/u} \mathbf{1}_{\max\{x, y\} \geq 1/(2u)} x^{\tRe s_1} y^{\tRe s_2} \left|W^\pm_3(x, y; u)\right| \frac{dx}{x} \frac{dy}{y} \\
&\ll \frac{1}{\tRe s_1} \frac{1}{\tRe s_2} u^{-\tRe s_1 - \tRe s_2} \exp(-2c u^{-1/4}) \ll \frac{1}{\tRe s_1} \frac{1}{\tRe s_2} \exp(-c u^{-1/4}).
\end{split}
\]
\end{proof}

\subsection{Bounding the error term $\Eg$} \label{sec:errorEg}
The aim of this section is to prove the following.
\begin{lem} \label{lem:EG1bound} Let $\mathcal{EG}(\Psi, Q)$ be as in~\eqref{eqn:errortermG} with
\begin{equation}
\label{eq:ADQ_0def}
A = \exp((\log Q)^{\varepsilon_0/2}), \quad D = (\log Q)^{\Delta_0}, \quad  \text{and} \quad Q_0 = \exp((\log Q)^{\varepsilon_0}).
\end{equation}
Then
$$\mathcal{EG}(\Psi, Q) \ll_C \frac{Q^2}{(\log Q)^C}$$
for any $C \geq 1$.
\end{lem}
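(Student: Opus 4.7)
The plan is to follow the sketch in Section~2 and imitate the treatment of the analogous term in~\cite[Section~6]{CL8th}, replacing the conditional eighth-moment bound used there by the unconditional Proposition~\ref{prop:8momentIntandSumoverq}. The essential new ingredient --- which compensates for the extra $q$-average in Proposition~\ref{prop:8momentIntandSumoverq} compared with~\eqref{eq:CL8thupperGRH} --- is the factor $|s_1+s_2|^{-l}$ in the bound~\eqref{eq:Wt2bound} on $\Wt_2^\pm$.

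Writing $m = gM$, $n = gN$ with $(M, N) = 1$, I would apply Lemma~\ref{lem:MellinXY} to $\W^\pm$ with $c_1 = c_2 = 1/\log Q$ and $u = dQ/(gh\lon^2)$, and interchange summation with integration (absolute convergence being provided by~\eqref{eq:Wt2bound} for $k$ large). After using M\"obius to remove $(M, N) = 1$ and invoking the multiplicativity of $\tau_4$, the inner sum over $M$ and $N$ factors as
\[
L\bigl(\tfrac12 + s_1, \chi\bigr)^{4}\,L\bigl(\tfrac12 + s_2, \cb\bigr)^{4}\cdot B_{g, a, b, d, h}(s_1, s_2; \chi),
\]
where $B$ is an arithmetic Dirichlet series absolutely convergent for $\tRe s_j \geq 0$ and of size $\ll (gabdh)^\varepsilon$ there. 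This reduces $\Eg$ to a double contour integral along $\tRe s_j = 1/\log Q$ of these $L$-function products weighted by $\Wt_2^\pm$.

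Parameterising $s_j = 1/\log Q + it_j$ and applying $|L^4 \cb^4| \le \tfrac12(|L|^8 + |\cb|^8)$, I would use the bound~\eqref{eq:Wt2bound} with $k = 2$ and $l$ a large integer: the $|s_1+s_2|^{-l}$ factor localises the integration to $|t_1+t_2| \ll 1$, while the $\max\{|s_1|,|s_2|\}^{-2}$ factor ensures absolute convergence in the remaining variable and restricts the effective integration range to $|t_1| \ll T$ for some $T$ determined by the support of $\W^\pm$ (essentially $T \ll Q^{\varepsilon}$). The resulting expression is an eighth-moment integral of $|L(\tfrac12 + \tfrac{1}{\log Q} + it, \chi)|^8$ over $|t| \leq T$, to which Proposition~\ref{prop:8momentIntandSumoverq} can be applied, summing over non-principal $\chi$ modulo $abh$ as well as over $h \ll Q/(adg)$, $g$, $a \leq A$, $b \mid g$, and $d \leq D$. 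Combined with the prefactor $Q/(ad\phi(abh))$ and the $(gabdh)^\varepsilon$ bound for $B$, this should collapse to an estimate of the form $Q^2(\log Q)^{O(1)}/Q_0^\varepsilon \ll Q^2/(\log Q)^C$, by the choice $Q_0 = \exp((\log Q)^{\varepsilon_0})$.

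The main technical obstacle will be the bookkeeping of the Euler-factor corrections arising from the coprimality conditions $(MN, abhd) = 1$, $b \mid g$, and $(M, N) = 1$, and showing that the resulting local factor $B_{g,a,b,d,h}$ is of size $(gabdh)^\varepsilon$ uniformly on $\tRe s_j \geq 0$. A secondary subtlety is the delicate balance between the parameters $k$ and $l$ in~\eqref{eq:Wt2bound}: the $|s_1+s_2|$-decay must precisely absorb the factor of $T$ one loses by using the $q$-averaged Proposition~\ref{prop:8momentIntandSumoverq} in place of the hypothetical bound~\eqref{eq:CL8thupperGRH}, which is the whole point of the more refined Mellin-transform bound~\eqref{eq:Wt2bound}.
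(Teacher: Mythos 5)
Your overall strategy is the paper's: Mellin-transform $\W^\pm$ in $x$ and $y$ only via Lemma~\ref{lem:MellinXY}, reduce to a double contour integral of $|L(\tfrac12+s_1,\chi)^4L(\tfrac12+s_2,\cb)^4|$ against $\Wt_2^\pm$, exploit the $|s_1+s_2|^{-l}$ decay to collapse one integration variable, and then apply the unconditional, $q$-averaged Proposition~\ref{prop:8momentIntandSumoverq}. However, two steps would fail as written. First, the claim that the $t_1$-integration is ``restricted to $|t_1|\ll T$ for some $T$ determined by the support of $\W^\pm$ (essentially $T\ll Q^\varepsilon$)'' is false: $\W^\pm$ is compactly supported in $u|x\pm y|$, not in the Mellin variables, and~\eqref{eq:Wt2bound} gives only polynomial decay $\max\{|s_1|,|s_2|\}^{-k}$ in $\tIm s_1$. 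With your choice $k=2$ the argument breaks: after the $s_2$-integration is localised by $|s_1+s_2|^{-l}$, you are left with $\int |L(\tfrac12+\tfrac{1}{\log Q}+it,\chi)|^8(1+|t|)^{-2}\,dt$, and since Proposition~\ref{prop:8momentIntandSumoverq} grows like $T^2$ (this is exactly the price of the unconditional bound), each dyadic block $|t|\sim T$ contributes $\asymp L^2$ and the sum over dyadic $T$ diverges. You need $k\geq 3$ for large $|s_1|$; the paper takes $k=4$ there, and $k=1$ for small $|s_1|$ because the factor $(1+u)^{k-1}$ with $u=dQ/(gh\lotwo)$ is expensive when $h$ is small.

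Second, bounding the arithmetic factor by $(gabdh)^{\varepsilon}$ is too lossy for this problem. The total saving available is only a power of $Q_0=\exp((\log Q)^{\varepsilon_0})$, which is smaller than $Q^{\varepsilon}$ for any fixed $\varepsilon>0$; after converting the sums over $a,b,h$ into a sum over the modulus $\ell=abh\leq Q$ (as one must, in order to apply the $q$-averaged Proposition~\ref{prop:8momentIntandSumoverq}), a stray factor $\ell^{\varepsilon}$ would swamp the saving. One needs the divisor-type sum $\sum_{abh=\ell,\,a\leq A}\tau^3(b)\tau_4^3(b)/(ab)$ to be $O(\log A\cdot\log L)$, i.e.\ polylogarithmic, and this is precisely where the restriction $a\leq A$ is indispensable (cf.~\eqref{eq:tauellrough} and Remark~\ref{rem:aleqA}); without it the sum over $a\mid \ell$ of $1/a$ weighted by the $\phi(abh)^{-1}$ normalisation cannot be controlled modulus by modulus. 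Your proposal mentions the constraint $a\leq A$ but does not identify this as the place where it is used, which is the one genuinely new combinatorial point of this section relative to~\cite{CL8th}.
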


\begin{proof}
Following~\cite[Proof of Lemma 7.1]{CL8th} until~\cite[(25)]{CL8th}, we see that $\mathcal{EG}(\Psi, Q)$ is
\begin{align}
\label{eqn:offdiag}
&\ll Q (\log Q)^{16} \sum_{\substack{a \leq A \\ b, h > 0}} \sum_{\substack{\chi \ ({\rm mod} \ abh) \\ \chi \neq \chi_0}} \sum_{\substack{g \\ b|g, (a,g) = 1}} \sum_{\substack{d \leq D \\ (d, g) = 1}}  \frac{\tau^3(d)\tau^3(g)\tau_4(g) }{ad g\phi(abh)} \notag \\
& \cdot \int_{(\frac{100}{\log Q})}\int_{(\frac{100}{\log Q})} \left(\left|L^4(1/2+s_1, \chi)L^4(1/2+s_2, \overline{\chi})\right| + \tau_4^2 (g) \right) \left|\Wt_2^\pm\left(s_1, s_2; \frac{dQ}{gh \lotwo}\right)\right| ds_1 ds_2.
\end{align} 
Notice first that Lemma \ref{lem:MellinXY} implies that, for any $k \geq 1$,
\begin{align}
\label{eqn:offdiagdivsum}
\begin{split}
\sum_{\substack{g \\ b|g}} & \frac{\tau^3(g) \tau_4^3(g)}{g} \sum_{d \leq D}\frac{\tau^3(d)}{d}  \left|\widetilde{\W}^\pm_2\left(s_1, s_2, \frac{dQ}{gh\lotwo}\right)\right|  \\
&\ll  \frac{\left(1 + \frac{QD}{bh\lotwo}\right)^{k-1} (\log Q)^2}{\max\{|s_1|, |s_2|\}^k |s_1 + s_2|^3} (\log D)^8 \sum_{\substack{g \\ b|g}} \frac{\tau^3(g) \tau_4^3(g)}{g} \exp\left(-c \left( \frac{gh\lotwo}{QD}\right)^{1/4}\right) \\
&\ll   \frac{\left(1 + \frac{QD}{bh\lotwo}\right)^{k-1} (\log Q)^{O_k(1)}}{\max\{|s_1|+1, |s_2|+1\}^k (|s_1 + s_2| + 1)^3} \cdot \frac{\tau^3(b) \tau_4^3(b)}{b} \cdot \exp\left(-c \left( \frac{bh\lotwo}{QD}\right)^{1/4}\right)
\end{split}
\end{align}

Let us write $\ell = abh$ in~\eqref{eqn:offdiag} and split into dyadic blocks $\ell \sim L$ and $\max\{|s_1|+1, |s_2|+1\} \sim T$. Note that $\phi(abh) \gg abh / \log\log (abh)$ and $bh \geq L/A$. Using also the inequality $xy \leq x^2+y^2$, the contribution of such a dyadic block to~\eqref{eqn:offdiag} is \begin{align}
\label{eqn:offdiagdiadic}
\begin{split}
&\ll Q (\log Q)^{O(1)} \min_{k \in \{1, 4\}}  \frac{\left(1 + \frac{ADQ}{L \lotwo}\right)^{k-1} \log^\varepsilon (L)}{L T^k} \exp\left(-c \left( \frac{L\lotwo}{ADQ}\right)^{1/4}\right) \\
& \quad \cdot \sum_{\ell \sim L} \left(\sum_{\substack{abh = \ell \\ a \leq A \\ b, h > 0}} \frac{\tau^3(b) \tau_4^3(b)}{ab}\right) \sum_{\substack{\chi \ ( {\rm mod} \ \ell) \\ \chi \neq \chi_0}} \int_{-T}^T \left( 1 + \left|L^8(\tfrac{1}{2}+\tfrac{100}{\log Q}+it, \chi)\right|\right)  \>dt.
\end{split}
\end{align}
Here it suffices to make a rough estimate
\begin{equation}
\label{eq:tauellrough}
\sum_{\substack{abh = \ell \\ a \leq A \\ b, h > 0}} \frac{\tau^3(b) \tau_4^3(b)}{ab} \ll \log A \prod_{p \mid \ell} \left(1+\frac{2^{9}}{p}\right) \ll \log A \cdot \log L,
\end{equation}
so that, by Proposition \ref{prop:8momentIntandSumoverq}, the second line of~\eqref{eqn:offdiagdiadic} is at most
\begin{align}
\label{eqn:offdiag8m}
\begin{split}
& L^2 T^2 (\log (10 LQT))^{O(1)} .
\end{split}
\end{align}

By (\ref{eqn:offdiagdivsum}), (\ref{eqn:offdiagdiadic}), and (\ref{eqn:offdiag8m}) above, (\ref{eqn:offdiag}) is bounded by 
\begin{align*}
& Q(\log Q)^{O(1)} \sum_{\substack{L, T \geq 1 \\ L = 2^u, T = 2^v}} \min_{k \in \{1, 4\}} L \frac{\left(1 + \frac{ADQ}{L\lotwo}\right)^{k-1} \log^{O(1)} (LT)}{T^{k-2}} \exp\left(-c \left( \frac{L\lotwo}{ADQ}\right)^{1/4}\right)
\end{align*}
Taking $k = 1$ when $T \leq 1 + \frac{ADQ}{L \lotwo}$ and $k =4$ otherwise, we see that this is 
\begin{align*}
&\ll Q \sum_{\substack{L \geq 1 \\ L = 2^u}} L \left(1 + \frac{ADQ}{L\lotwo}\right) \log^{O(1)} (LQ) \exp\left(-c \left( \frac{L\lotwo}{ADQ}\right)^{1/4}\right) \\
&\ll \frac{ADQ^2(\log Q)^{O(1)}}{Q_0^2} \ll_C \frac{Q^2}{(\log Q)^C}
\end{align*}
for any $C \geq 1$. Hence the claim follows.
\end{proof}

\begin{rem}
\label{rem:aleqA}
In~\cite{CL8th} the conditional bound~\eqref{eq:CL8thupperGRH} was used instead of Proposition~\ref{prop:8momentIntandSumoverq}. Since~\eqref{eq:CL8thupperGRH} does not require averaging over the modulus, in~\cite{CL8th} it was possible to utilize averaging over $\ell$ when bounding the left hand side of~\eqref{eq:tauellrough}. Due to this, the restriction $a \leq A$ (which is in place to make~\eqref{eq:tauellrough} hold) was not needed in the definition of $\mathcal{G}^\pm(\Psi, Q)$ in~\cite{CL8th}. In our case, we needed to separate the terms with $a > A$ and treat them by a variant of the treatment of the case $d > D$ in~\cite{CL8th}.
\end{rem}

\section{Evaluating $\mathcal{MS}(\Psi, Q) + \mathcal{MG}(\Psi, Q)$} 
\label{sec:MsplusMg}
We recall that $\mathcal{MS}(\Psi, Q)$ and $\mathcal{MG}(\Psi, Q)$ are defined in
(\ref{eq:MSdef}) and  (\ref{eqn:maintermG}), respectively. We evaluate $\mathcal{MS}(\Psi, Q) + \Mg$ following~\cite[Section 8]{CL8th} but the details are somewhat different since we have the restriction $a \leq A$ in our sums. 

Similarly to~\cite[Section 8]{CL8th} we use the Mellin transform of $\Wt_1^\pm$ (from Lemma \ref{lem:MellinU}) to write $\mathcal{MG}(\Psi, Q)$ in terms of a contour integral with $\tRe z = -\varepsilon < 0$ and shift the contour to $\R(z) = \varepsilon > 0$. Here we pick up poles at $z = 0$ whose residue essentially cancels with $\mathcal{MS}(\Psi, Q)$. This process is recorded in the following lemma.

\begin{lem} \label{lem:MSplusMG} Let $C \geq 1$. Let $\mathcal{MS}(\Psi, Q)$ and $\mathcal{MG}(\Psi, Q)$ be as in (\ref{eq:MSdef}) and (\ref{eqn:maintermG}) with parameters as in~\eqref{eq:ADQ_0def}. Once $\Delta_0$ is large enough in terms of $C$, one has
\begin{align} \label{eqn:maintermsumMSandMG} 
\begin{split}
\mathcal{MS}(\Phi, Q) + \mathcal{MG}(\Phi, Q) = \frac{Q}{2} &\sum_{\substack{m, n = 1 \\ m \neq n}}^{\infty} \frac{\tau_4 (m) \tau_4 (n)}{\sqrt{mn}}  \frac{1}{2\pi i} \int_{(\varepsilon)} \Wt^\pm_1\left(\frac{m \lotwo}{Q^2}, \frac{n \lotwo}{Q^2} ; z\right) \\
&\cdot \frac{\zeta(1 - z) \F(-z, g, MN)}{\zeta(1 + z) \phi(gMN, 1+ z)} \left(\frac{Q}{g \lotwo}\right)^{-z} \> dz + O\left( \frac{Q^2}{(\log Q)^C}\right),
\end{split}
\end{align}
where 
$$ \phi(r, s)= \prod_{p | r} \left( 1 - \frac{1}{p^s}\right),$$
and
\[
\F(s, g, MN) = \phi(MN, s + 1) \prod_{p \nmid gmN} \left(1 - \frac{1}{p(p-1)} + \frac{1}{p^{1 + s}(p-1)}\right) \prod_{\substack{p|g \\ p \nmid MN}} \left( 1 - \frac{1}{p^{1 + s}} - \frac{1}{p-1} \left(1 - \frac{1}{p^s}\right)\right).
\] 
\end{lem}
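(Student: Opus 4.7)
The plan is to apply Lemma~\ref{lem:MellinU} to each $\W^\pm$ in~\eqref{eqn:maintermG}, evaluate the resulting arithmetic Dirichlet series in closed form up to tails coming from the restrictions $a \leq A$ and $d \leq D$, and identify the tail contribution with $-\mathcal{MS}(\Psi, Q)$ via the Dirichlet-convolution identity
\est{
\sum_{adl = q}\frac{\mu(a)\mu(d) l}{\phi(al)} = \mathbf{1}_{q = 1},
}
which holds for all $q \geq 1$ (since $\sum_{al = r}\mu(a)\cdot r/a = \phi(r)$, the left side collapses to $\sum_{d \mid q}\mu(d)$). As $\Psi$ is supported on $[1, 2]$ and $q \gg Q$ is large, the restricted sum over $a \leq A, d \leq D$ equals the negative of the complementary sum over $a > A$ or $d > D$.

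Substituting the Mellin representation of $\W^\pm$ with contour $\tRe z = \varepsilon$ into~\eqref{eqn:maintermG} and interchanging the $z$-integral with the arithmetic sums (justified by the rapid $|\tIm z|$-decay of $\Wt_1^\pm$), one represents $\mathcal{MG}(\Psi, Q)$ as an integral of the same shape as the right-hand side of~\eqref{eqn:maintermsumMSandMG} but with the factor $\zeta(1-z)\F(-z, g, MN)/[\zeta(1+z)\phi(gMN, 1+z)]$ replaced by a restricted Dirichlet factor $\mathcal{D}_{g,M,N}(z)$ over $a \leq A$, $b \mid g$, $d \leq D$, $h \geq 1$ with $h^z d^{-z}$ weights. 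Completing the $a$- and $d$-sums to $a, d \geq 1$ at the cost of a tail $\mathcal{T}_{g,M,N}(z)$, and computing the unrestricted inner sum $\sum_{(h, MN) = 1}h^z/\phi(abh)$ prime-by-prime before assembling with the Möbius sums over $a, b, d$, one obtains the closed form $\zeta(1-z)\F(-z, g, MN)/[\zeta(1+z)\phi(gMN, 1+z)]$ for the completed series. The tail is bounded on the contour by $\mathcal{T}_{g,M,N}(z) \ll A^{-\varepsilon/3}\exp((\log Q)^{\varepsilon/3}) + D^{-\varepsilon}$ via~\eqref{eq:aphibound} together with Rankin's trick, giving a direct $z$-integral contribution of $O_C(Q^2/(\log Q)^C)$ provided $\Delta_0$ is large enough in terms of $C$.

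To identify the tail integral with $-\mathcal{MS}(\Psi, Q)$, invert the Mellin transform in the tail part of the $z$-integral: this reconstructs the sum in~\eqref{eqn:maintermG} restricted to $a > A$ or $d > D$. Reparametrizing from $(a, b, d, h)$ to $(a, d, l)$ via $l = |M \pm N|/(ah)$, summing over $b \mid g$ with its coprimality conditions, and performing a prime-by-prime Euler-factor match between $\phi(abh)^{-1}$ (with $h = |M \pm N|/(al)$) and $\phi(al)^{-1}$, one recognizes the tail as precisely the defining expression~\eqref{eq:MSdef} of $\mathcal{MS}(\Psi, Q)$ but with the complementary restriction $a > A$ or $d > D$. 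By the opening identity applied to each $q \geq 2$, this equals $-\mathcal{MS}(\Psi, Q)$, and~\eqref{eqn:maintermsumMSandMG} follows.

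The main obstacle will be the combinatorial step above: one must verify that $\sum_{b \mid g, (b, MN) = 1}\mu(b)/\phi(abh)$ with $h = |M \pm N|/(al)$ reassembles into $1/\phi(al)$ (up to the local correction factors already absorbed into $\F$), which requires a local analysis separating primes according to whether they divide $a$, $b$, $g$, $MN$, or $h$; the structure is ultimately the same as that underlying the Euler product for $\mathcal{D}_{g,M,N}^\infty(z)$ evaluated at $z = 0$, but the bookkeeping is intricate. A secondary technical concern is the tail bound for $\sum_{a > A}$: the quasi-polylogarithmic value $A = \exp((\log Q)^{\varepsilon_0/2})$ gives only the quasi-polylogarithmic saving $A^{-\varepsilon/3}\exp((\log Q)^{\varepsilon/3})$, which must be combined with the $|\tIm z|$-decay of $\Wt_1^\pm$ to secure the aggregated $O_C((\log Q)^{-C})$ error.
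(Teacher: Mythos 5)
There is a genuine gap, and it sits at the heart of the lemma. Your plan inserts the Mellin representation of $\W^\pm$ with the contour already at $\tRe z=\varepsilon>0$ and then sums over $h$ "prime-by-prime". But the inner sum $\sum_{(h,MN)=1}h^{z}/\phi(abh)$ (note the sign: $u^{-z}$ with $u=dQ/(gh\lotwo)$ produces $h^{+z}$) converges absolutely only for $\tRe z<0$; the closed form $\zeta(1-z)\prod_p(\cdots)$ at $\tRe z=\varepsilon$ is its analytic continuation, not the value of the series, and the decay of $\Wt_1^\pm$ in $\tIm z$ does nothing to justify the interchange. The correct procedure is to start at $\tRe z=-\varepsilon$, evaluate the $h$-sum there, and then shift to $\tRe z=+\varepsilon$; in doing so one crosses the pole of $\zeta(1-z)$ at $z=0$ and picks up a residue $\mathcal{MG}_1(\Psi,Q)$ which is a main-term-sized object. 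Your argument silently discards this residue, and it is precisely this residue that is responsible for cancelling $\mathcal{MS}(\Psi,Q)$: the paper shows $\mathcal{MG}_1+\mathcal{MS}=O(DQ^2(\log Q)^{O(1)}/Q_0)$ by writing $\mathcal{MS}$ itself as a Mellin integral over $\tRe z=1+\varepsilon$ and extracting the pole of $\zeta(z)$ at $z=1$, whose residue is exactly $-\mathcal{MG}_1$. Nothing in your proposal performs this cancellation.

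Your substitute mechanism --- identifying the tails from completing the $a$- and $d$-sums with $-\mathcal{MS}(\Psi,Q)$ --- cannot work, for two reasons. First, it is internally inconsistent: you correctly estimate those tails as $O_C(Q^2/(\log Q)^{C})$ (this is the same completion step the paper performs inside $\mathcal{MG}_2$), so they cannot also equal $-\mathcal{MS}(\Psi,Q)$, which contributes at the level of the main term in Proposition~\ref{prop:mainMG}. Second, the proposed reparametrization $l=|M\pm N|/(ah)$ is not available: in $\mathcal{MG}$ the congruence $M\equiv\mp N\pmod{abh}$ has already been replaced by its principal-character density $1/\phi(abh)$, so $h$ runs over \emph{all} integers coprime to $MN$ and bears no divisibility relation to $M\pm N$; moreover the smooth weights disagree ($\mathcal{MS}$ carries $\Psi(adl/Q)V(m,n;adl/Q_0)$ as a function of $q=adl$, while the $\mathcal{MG}$-tail carries $\tfrac{d|M\pm N|}{Qh}\Psi(\tfrac{d|M\pm N|}{Qh})V(\cdots)$ as a function of $h$). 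These two sums agree only asymptotically, through exactly the residue computation described above, not through the combinatorial identity $\sum_{adl=q}\mu(a)\mu(d)l/\phi(al)=\mathbf{1}_{q=1}$ (which is correct, but only explains why $\mathcal{MS}$ equals minus its complementary-range version, not why it cancels against anything in $\mathcal{MG}$). As written, the argument is therefore circular: it arrives at the stated right-hand side only because two main-term-sized quantities ($\mathcal{MG}_1$ and $\mathcal{MS}$) have each been dropped without justification.
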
 

\begin{proof}
Write
\[
S(a, b, d, g, M, N) := \sum_{\substack{h > 0 \\ (h, MN) = 1}}  \frac{1}{\phi(abh)} \W^\pm \left(\frac{gM \lotwo}{Q^2}, \frac{gN \lotwo}{Q^2} ; \frac{dQ}{gh \lotwo}\right),
\]
so that
\[
\mathcal{MG}(\Psi, Q) = \frac{Q}{2} \sum_{\substack{m, n = 1\\ m \neq n}}^{\infty} \frac{\tau_4(m) \tau_4(n)}{\sqrt{mn}}\sum_{\substack{d \leq D \\ (d, gMN) = 1}}  \sum_{\substack{a \leq A \\ (a, gMN)=1}} \sum_{\substack{b|g \\ (b, MN) = 1}} \frac{\mu(a) \mu(b) \mu(d) }{ad} S(a, b, d, g, M, N).
\]
where as usual $g := (m,n)$, $M := m / g$ and $N := n / g$.

Using the Mellin transform of $\Wt_1^\pm$ given in Lemma \ref{lem:MellinU} with $c = - \varepsilon < 0$, we obtain that
\begin{equation} \label{eqn:maintermGsumoverh}
S(a, b, d, g, M, N) = \sum_{\substack{h = 1 \\(h, MN) = 1}}^{\infty}\frac{1}{\phi(abh)} \frac{1}{2\pi i} \int_{(-\varepsilon)} \Wt_1^\pm \left(\frac{gM \lotwo}{Q^2}, \frac{gN \lotwo}{Q^2} ; z\right) \left(\frac{dQ}{gh \lotwo}\right)^{-z} \> dz.
\end{equation}
We can interchange the sum and the integral since the sum over $h$ is absolutely convergent for $\tRe (z)< 0$. Writing out the Euler product, we obtain that, for $(ab, MN) = 1$ and $\tRe (s) > 0$, one has
\begin{equation}
\label{eq:EulerProduct1perphi}
\begin{split}
\sum_{\substack{h = 1 \\(h, MN) = 1}}^{\infty} \frac{1}{\phi(abh) h^s} 
& = \frac{1}{\phi(ab)} \zeta(s+1) \prod_{p \mid MN} \left(1-\frac{1}{p^{s+1}}\right)\prod_{p \nmid abMN} \left(1+\frac{1}{p^{s}} \cdot \frac{1}{p(p-1)}\right).
\end{split}
\end{equation}
Therefore
\[
\begin{split}
S(a, b, d, g, M, N) &= \frac{1}{2\pi i} \int_{(-\varepsilon)} \Wt^\pm_1\left(\frac{gM \lotwo}{Q^2}, \frac{gN \lotwo}{Q^2} ; z\right) \zeta(1 - z) \left(\frac{dQ}{g \lotwo}\right)^{-z} \\
& \qquad \cdot \frac{1}{\phi(ab)} \prod_{p \mid MN} \left(1-\frac{1}{p^{1-z}}\right)\prod_{p \nmid abMN} \left(1+\frac{p^z}{p(p-1)}\right) \> dz .
\end{split}
\]
Next we move the integration line to $\tRe z = \varepsilon$. We encounter a pole at $z = 0$, leading to a main term $-({\rm Residue \ at} \ z = 0)$ that equals  
\begin{align*} 
&\Wt^\pm_1\left(\frac{gM \lotwo}{Q^2}, \frac{gN \lotwo}{Q^2} ; 0\right) \frac{1}{\phi(ab)} \frac{\phi(MN)}{MN} \prod_{p \nmid abMN} \left(1+\frac{1}{p(p-1)}\right).
\end{align*}
Here, by first substituting $u' = u|x\pm y|$ in the definition of $\Wt^\pm_1$ and then using~\eqref{eqn:Vc},
\[
\Wt^\pm_1\left(\frac{gM \lotwo}{Q^2}, \frac{gN \lotwo}{Q^2} ; 0\right) = \int_0^\infty \Psi(u) V\left(\frac{gM \lotwo}{Q^2}, \frac{gN \lotwo}{Q^2}, u\right) du = \int_0^\infty \Psi(u) V\left(gM, gN, \frac{uQ}{Q_0}\right) du.
\]
Therefore, writing $\mathcal{MG}_1(\Psi, Q)$ for the contribution of the residue term at $z = 0$ we obtain
\begin{equation}
\label{eq:MG=MG1+MG2}
\mathcal{MG}(\Psi, Q) = \mathcal{MG}_1(\Psi, Q) + \mathcal{MG}_2(\Psi, Q),
\end{equation}
where
\begin{equation} \label{eqn:residue0ofMg}
\begin{split}
\mathcal{MG}_1(\Psi, Q) &:= Q \sum_{\substack{m, n = 1  \\ m \neq n}} \frac{\tau_4 (m) \tau_4 (n)}{\sqrt{mn}} \frac{\phi(MN)}{MN}  \sum_{\substack{d \leq D \\ (d, mn) = 1}}\frac{\mu(d)}{d} \sum_{\substack{a \leq A \\ (a, mn) = 1}} \sum_{\substack{b \mid g \\ (b, MN) = 1}} \frac{\mu(a)\mu(b)}{a\phi(ab)} \\
& \qquad \cdot  \prod_{p \nmid abMN} \left(1+\frac{1}{p(p-1)}\right) \int_0^\infty  \Psi(u) V\left(gM, gN, \frac{uQ}{Q_0}\right) du,
\end{split}
\end{equation}
where we have changed the factor of $Q/2$ to $Q$ due to summing over $\pm$.  Moreover,
\begin{equation}
\label{eq:Mg2def}
\begin{split}
&\mathcal{MG}_2(\Psi, Q) := \frac{Q}{2} \sum_{\substack{m, n = 1\\ m \neq n}}^{\infty} \frac{\tau_4(m) \tau_4(n)}{\sqrt{mn}}\frac{1}{2\pi i} \int_{(\varepsilon)}  \sum_{\substack{d \leq D \\ (d, gMN) = 1}}  \sum_{\substack{a \leq A \\ (a, mn)=1}} \sum_{\substack{b|g \\ (b, MN) = 1}} \frac{\mu(a) \mu(b) \mu(d) }{a\phi(ab)d^{1+z}} \\
& \cdot \Wt^\pm_1\left(\frac{gM \lotwo}{Q^2}, \frac{gN \lotwo}{Q^2} ; z\right) \zeta(1 - z) \left(\frac{Q}{g \lotwo}\right)^{-z} \prod_{p \mid MN} \left(1-\frac{1}{p^{1-z}}\right)\prod_{p \nmid abMN} \left(1+\frac{p^z}{p(p-1)}\right) \> dz.
\end{split}
\end{equation}

We will first show that
\begin{equation}
\label{eq:MG1+MS}
\mathcal{MG}_1(\Psi, Q) + \mathcal{MS}(\Psi, Q) = O\left(\frac{DQ^2(\log Q)^{O(1)}}{Q_0}\right).
\end{equation}
By definition~\eqref{eq:MSdef}, 
\[
\begin{split}
\mathcal {MS}(\Psi, Q) &=- \sum_{\substack{m, n = 1  \\ m \neq n}} \frac{\tau_4 (m) \tau_4 (n)}{\sqrt{mn}} \sum_{\substack{ a \leq A, d \leq D \\  (ad, mn) = 1}} \sum_{\substack{l > 0 \\ (l, MN) = 1}} \mathbf{1}_{(l, g) = 1} \Psi \left( \frac{adl}{Q}\right) \frac{\mu(a) \mu(d) l}{\phi(al)}   V\left(m, n ; \frac{adl}{\lon} \right).
\end{split}
\]
Writing $\mathbf{1}_{(l, g)=1} = \sum_{\substack{l = bk, b \mid g}} \mu(b)$, we see that
\[
\begin{split}
\mathcal {MS}(\Psi, Q) &=  - \sum_{\substack{m, n = 1  \\ m \neq n}} \frac{\tau_4 (m) \tau_4 (n)}{\sqrt{mn}} \sum_{\substack{ a \leq A, d \leq D \\ (ad, mn) = 1}} \sum_{\substack{b \mid g, k > 0 \\ (bk, MN) = 1}} \Psi \left( \frac{abdk}{Q}\right) \frac{\mu(a) \mu(b) \mu(d) bk}{\phi(abk)}   V\left(m, n ; \frac{abdk}{\lon} \right).
\end{split}
\]
Writing $\mathcal{U}(m, n, u) = V(m, n, u/\lon) \Psi(u/Q)$ and using Mellin inversion, we see that
\[
\mathcal {MS}(\Psi, Q) = - \sum_{\substack{m, n = 1  \\ m \neq n}} \frac{\tau_4 (m) \tau_4 (n)}{\sqrt{mn}} \sum_{\substack{ a \leq A, d \leq D \\ (ad, mn) = 1}} \sum_{\substack{b \mid g \\ (b, MN) = 1}} \sum_{\substack{k > 0 \\ (k, MN) = 1}}  \frac{\mu(a) \mu(b) \mu(d) bk}{\phi(abk)} \frac{1}{2\pi i} \int_{(1+\varepsilon)} \frac{\widetilde{\mathcal{U}}(m, n, z)}{(abdk)^{z}} dz,
\]
where
\[
\widetilde{\mathcal{U}}(m, n, z) = \int_0^\infty \mathcal{U}(m, n, u) u^z \frac{du}{u}.
\]
Noting that the sum over $k$ above is absolutely convergent for $\tRe z > 1$, we can interchange the order of summation and integration. Recalling~\eqref{eq:EulerProduct1perphi} we see that 
\[
\begin{split}
\mathcal {MS}(\Psi, Q) &= - \sum_{\substack{m, n = 1  \\ m \neq n}} \frac{\tau_4 (m) \tau_4 (n)}{\sqrt{mn}} \sum_{\substack{b \mid g \\ (b, MN) = 1}} \frac{1}{2\pi i} \int_{(1+\varepsilon)} \sum_{\substack{ a \leq A, d \leq D \\ (ad, mn) = 1}}  \frac{\mu(a) \mu(b) \mu(d) b}{\phi(ab)(abd)^z} \zeta(z)\\
& \qquad \qquad \cdot \prod_{p \mid MN} \left(1-\frac{1}{p^z}\right) \prod_{p \nmid abMN}\left(1+\frac{1}{p^{z-1}} \cdot \frac{1}{p(p-1)}\right) \widetilde{\mathcal{U}}(m, n, z) dz.
\end{split}
\]
Next we move the integration to the line $\tRe z = 1/\log Q$. Since
\[
\widetilde{\mathcal{U}}(m, n, 1) = Q \int_0^\infty \Psi(u) V\left(m, n; \frac{uQ}{\lon}\right) du,
\]
the residue from the pole at $z = 1$ equals $-\mathcal{MG}_1(\Psi, Q)$ and the remaining integral can be included in the error term in~\eqref{eq:MG1+MS} since $\widetilde{\mathcal{U}}(m, n; z)$ decays rapidly when $m$ or $n$ is $\gg Q^2/Q_0$ or $|\tIm z|$ grows.

Therefore~\eqref{eq:MG1+MS} holds and the remaining main term of $\mathcal{MG}(\Psi, Q) + \mathcal{MS}(\Psi, Q)$ is $\mathcal{MG}_2(\Psi, Q)$ defined by~\eqref{eq:Mg2def}. We shift the contour in~\eqref{eq:Mg2def} to ${\rm Re}(z) = 1 - \frac{1}{\log Q}.$ The sums over $a$ and $d$ can be extended to all positive integers with an error $\ll Q^2/(\log Q)^C$ using Lemma~\ref{lem:MellinU}. For the $d$-sum, this was done in~\cite[Proof of Lemma 8.1]{CL8th} and one can argue similarly for the $a$-sum. Hence, apart from an acceptable error, $\mathcal{MG}_2(\Psi, Q)$ equals 
\[
\begin{split}
&\frac{Q}{2} \sum_{\substack{m, n = 1\\ m \neq n}}^{\infty} \frac{\tau_4(m) \tau_4(n)}{\sqrt{mn}}\frac{1}{2\pi i} \int_{(1-1/\log Q)} \Wt^\pm_1\left(\frac{gM \lotwo}{Q^2}, \frac{gN \lotwo}{Q^2} ; z\right) \zeta(1 - z) \left(\frac{Q}{g \lotwo}\right)^{-z} \\
&\quad \cdot  \sum_{\substack{d > 0 \\ (d, gMN) = 1}}  \sum_{\substack{a >0 \\ (a, mn)=1}} \sum_{\substack{b|g \\ (b, MN) = 1}} \frac{\mu(a) \mu(b) \mu(d) }{a\phi(ab)d^{1+z}}  \prod_{p \mid MN} \left(1-\frac{1}{p^{1-z}}\right)\prod_{p \nmid abMN} \left(1+\frac{p^z}{p(p-1)}\right) \> dz.
\end{split}
\]

Writing $r = ab$, the second line equals
\[
\begin{split}
& = \sum_{\substack{d > 0 \\ (d, gMN) = 1}}  \sum_{\substack{r > 0 \\ (r, MN) = 1}} \frac{\mu(d) \mu(r) (r, g)}{r \phi(r)d^{1+z}} \prod_{p \mid MN} \left(1-\frac{1}{p^{1-z}}\right)\prod_{p \nmid rMN} \left(1+\frac{p^z}{p(p-1)}\right).
\end{split}
\]
Careful calculation with the Euler products reveals that this equals
\[
\frac{\mathcal{F}(-z, g, MN)}{\zeta(1+z) \phi(gMN, 1+z)}
\]
and thus
\[
\begin{split}
\mathcal{MG}_2(\Psi, Q) &=\frac{Q}{2} \sum_{\substack{m, n = 1\\ m \neq n}}^{\infty} \frac{\tau_4(m) \tau_4(n)}{\sqrt{mn}} \frac{1}{2\pi i} \int_{(1-1/\log Q)} \frac{\zeta(1 - z)\mathcal{F}(-z, g, MN)}{\zeta(1+z) \phi(gMN, 1+z)} \\
& \qquad \qquad \qquad \cdot \Wt^\pm_1\left(\frac{gM \lotwo}{Q^2}, \frac{gN \lotwo}{Q^2} ; z\right)  \left(\frac{Q}{g \lotwo}\right)^{-z} \> dz.
\end{split}
\]
Now the Lemma follows from moving the contour back to $\tRe z = \varepsilon$ along with \eqref{eq:MG=MG1+MG2} and~\eqref{eq:MG1+MS}.
\end{proof}

Next we will prove the following proposition which evaluates the main term in  Lemma \ref{lem:MSplusMG}. In~\cite{CL8th} this proposition was conditional on the Lindel\"{o}f hypothesis, but here we prove it unconditionally.

\begin{prop} \label{prop:mainMG}  Let $\mathcal{MS}(\Psi, Q)$ and $\mathcal{MG}(\Psi, Q)$ be as in (\ref{eq:MSdef}) and (\ref{eqn:maintermG}) with parameters as in~\eqref{eq:ADQ_0def}. Once $\Delta_0$ is large enough, one has
$$  \mathcal{MS}(\Psi, Q) + \Mg = \frac{-53524}{16!}Q^2(\log Q)^{16}\frac{\psit(2)}{2} \frac{\mathcal{K}\left(\h, \h; 1 \right)}{\zeta(2)}\int_{-\infty}^{\infty} G\left(\h, t \right) \> dt + O(Q^2 (\log Q)^{15 }).$$
\end{prop}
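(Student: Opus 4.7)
Starting from Lemma~\ref{lem:MSplusMG}, the task reduces to evaluating the explicit triple-sum/integral on the right-hand side of~\eqref{eqn:maintermsumMSandMG}. The plan is to open $\Wt^\pm_1$ via Mellin inversion in the $x, y$ variables (using the truncated formula~\eqref{eqn:truncate2}), factor the resulting arithmetic sum as an Euler product containing $\zeta^4\cdot\zeta^4$ times a smooth correction, and then extract the main term by contour shifting. This is essentially the strategy of~\cite{CL8th}; the novelty here is that the error bounds can be made unconditional.

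After truncating the $s_1, s_2$-Mellin integrals to $|\tIm s_i| \leq T = Q^{\varepsilon}$ via~\eqref{eqn:truncate2}, the truncation tails contribute negligibly thanks to the decay bound~\eqref{eqn:boundWt3}. On the truncated portion, I would write $m = gM, n = gN$ with $(M, N) = 1$ and interchange summation and integration (initially valid for $\tRe s_i$ large enough, then continued meromorphically). The arithmetic factor $\F$ was designed precisely so that the resulting sum factors as
\[
\zeta\left(\tfrac{1}{2}+s_1\right)^4 \zeta\left(\tfrac{1}{2}+s_2\right)^4 \cdot \mathcal{K}\left(\tfrac{1}{2}+s_1, \tfrac{1}{2}+s_2; z\right),
\]
where $\mathcal{K}$ is given by an absolutely convergent Euler product in a neighborhood of the critical point and hence analytic there.

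Next, I would extract the main term by shifting the triple contour. The integrand has a pole of order four at each of $s_1 = \tfrac{1}{2}, s_2 = \tfrac{1}{2}$ (from $\zeta^4$) and a simple pole at $z = 0$ (from $\zeta(1-z)$). Shifting the $s_i$-contours and the $z$-contour across these poles picks up the iterated residue at the critical point, which produces the main term. Evaluating~\eqref{eqn:mellinthree} at $\omega = 1/4$, $z = 0$ produces the factors $\widetilde{\Psi}(2)$ and $\int G(\tfrac{1}{2}, t)\, dt$, while differentiating the $Q$-powers $(Q^2/\lotwo)^{s_i}$ and $(Q/\lotwo)^{-z}$ at the critical point produces the $(\log Q)^{16}$ behavior. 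Careful bookkeeping of the Taylor expansions of $\mathcal{K}$ and of the combinatorial contributions from each pole order yields the arithmetic factor $\mathcal{K}(\tfrac{1}{2}, \tfrac{1}{2}; 1)/\zeta(2)$ and the numerical coefficient $-53524/16!$.

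The main obstacle, and the only point of departure from~\cite{CL8th}, is bounding the residual integrals on the shifted contours by $O(Q^2(\log Q)^{15})$. In~\cite{CL8th} this required the Lindel\"of hypothesis to control the integrand for moderately large $|s_i|$. Here, I would instead exploit the decay in $|s_1 + s_2|$ provided by Lemma~\ref{lem:MellinXY}: on shifted contours with $\tRe s_i = -\delta$ for some small $\delta > 0$, the weight bound~\eqref{eq:Wt2bound} supplies an extra factor $|s_1+s_2|^{-\ell}$ for any $\ell \geq 0$. Combined with the fourth-moment bound for $\zeta$ on the critical line (applied via Cauchy--Schwarz), this produces a saving of $Q^{-c\delta}(\log Q)^{O(1)}$ uniformly on the shifted contour, which is enough to absorb the residual integrals into the acceptable error and complete the proof.
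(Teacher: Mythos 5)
Your outline follows the paper's strategy at the coarsest level, but several of the concrete steps you specify would fail, and they are precisely the points where the proof is delicate. First, the truncation height $T=Q^{\varepsilon}$ is far too small: the error term in \eqref{eqn:truncate2} is $x^{-c_1}y^{-c_2}/(T^{1-c}|\log(x/y)|)$ with $x=mQ_0^2/Q^2$, $y=nQ_0^2/Q^2$, and after summing over $m\neq n$ with $c_1=c_2=\tfrac12+\varepsilon$ (and handling the $|\log(m/n)|^{-1}$ singularity by splitting at $|m-n|\asymp n$) one is left with roughly $Q^{3+O(\varepsilon)}/T^{1-\varepsilon}$; this is below $Q^2$ only if $T\gg Q^{1+\delta}$, and the paper takes $T=Q^{5/4}$. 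Second, your factorization of the $m,n$-sum omits the factor $\zeta^{16}(1+s_1+s_2-z)$ arising from the common divisor $g=(m,n)$; the correct identity is $\mathcal{J}(s_1,s_2;z)=\zeta(2-z)\,\zeta^4(\h+s_1)\zeta^4(\h+s_2)\zeta^{-4}(\tfrac32+s_1-z)\zeta^{-4}(\tfrac32+s_2-z)\,\zeta^{16}(1+s_1+s_2-z)\,\mathcal{K}(s_1,s_2;z)$. Without the $\zeta^{16}$ factor you cannot produce $(\log Q)^{16}$: the order-four poles at $s_1=s_2=\h$ acting on $Q^{2s_1+2s_2}$ yield at most $(\log Q)^6$. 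Relatedly, the main term in $z$ does not come from the simple pole of $\zeta(1-z)$ at $z=0$ --- that residue was already extracted and cancelled against $\mathcal{MS}(\Psi,Q)$ inside Lemma~\ref{lem:MSplusMG}, which is why its conclusion has the $z$-contour at $\tRe z=\varepsilon$. It comes from the order-$16$ pole of $\zeta^{16}(1+s_1+s_2-z)=\zeta^{16}(2-z)$ at $z=1$ after the $s_i$-residues are taken; this is why the stated answer contains $\mathcal{K}(\h,\h;1)$ and $\int G(\h,t)\,dt$ (corresponding to $\omega=(s_1+s_2-z)/2=0$), whereas your evaluation at $\omega=\tfrac14$, $z=0$ would give $G(\tfrac34,t)$.

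The unconditional error analysis is also not the one you describe. Lemma~\ref{lem:MellinXY} bounds $\Wt_2^{\pm}$ and is the tool for $\mathcal{EG}(\Psi,Q)$ in Section~\ref{sec:errorEg}; in the present argument the weight is $\Wt_3$ and the relevant decay is \eqref{eqn:boundWt3}, in particular the factor $(1+|\xi|)^{\tRe z-1}\approx T'^{-1+\varepsilon}$. Shifting to $\tRe s_i=-\delta$ is counterproductive: $\zeta(\h+s_i)$ then sits to the left of the critical line, where the functional equation inflates the eighth moment by $T^{8\delta}$, and the Euler product $\mathcal{K}$ is only guaranteed to converge for $\tRe s_i>0$. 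The paper shifts only to $\tRe s_i=2\varepsilon$, controls the horizontal segments with the Weyl/Phragm\'en--Lindel\"of bound, and controls the vertical segments by pairing the $T'^{-1+\varepsilon}$ decay of $\Wt_3$ with the unconditional bound $\int_{-T'}^{T'}|\zeta(\h+it)|^8\,dt\ll T'^{3/2}(\log T')^{21/2}$ of Lemma~\ref{le:zeta8th}. That lemma is Cauchy--Schwarz between the fourth moment and Heath-Brown's \emph{twelfth} moment; the fourth moment alone, applied via Cauchy--Schwarz as you propose, cannot bound an eighth moment. The resulting count $Q^{1+\varepsilon}\max_{T'\leq T}T'^{1/2+\varepsilon}\ll Q^{7/4}$ is what makes the proposition unconditional.
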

with $\mathcal{K}(s_1, s_2; z)$ defined in \cite[Equation (38)]{CL8th}.
\begin{proof}
First we apply \eqref{eqn:truncate2} from Lemma \ref{lem:MellinXYU} to~\eqref{eqn:maintermsumMSandMG} to obtain that 

\es{
\label{eq:MTwithW3}
&\mathcal{MS}(\Psi, Q) + \Mg = \frac{Q}{2} \sum_{\substack{m, n = 1 \\ m \neq n}}^{\infty} \frac{\tau_4 (m) \tau_4 (n)}{\sqrt{mn}} \frac{1}{(2\pi i)^3} \int_{(\varepsilon)}  \int_{\frac 12 + \varepsilon - iT }^{\frac 12 + \varepsilon + iT} \int_{\frac 12 + \varepsilon - iT}^{\frac 12 + \varepsilon + iT}\Wt_3(s_1, s_2 ; z)  \\
&\quad \cdot \frac{\zeta(1 - z) \F(-z, g, MN)}{\zeta(1 + z) \phi(gMN, 1+ z)} \left(\frac{Q}{g \lotwo}\right)^{-z} \left(\frac{Q^2}{m Q_0^2}\right)^{s_1} \left(\frac{Q^2}{n Q_0^2}\right)^{s_2} \> ds_2 ds_1 dz \\
& \quad + O\left(\frac{Q^{3 + 3\varepsilon}}{T^{1 - \varepsilon}} \sum_{\substack{m, n = 1 \\ m \neq n}}^{\infty} \frac{\tau_4(m) \tau_4(n)}{m^{1 + \varepsilon} n^{1 + \varepsilon}} \frac{ 1}{ \left|\log\left( \frac mn\right) \right| } \right) + O(Q^2).
}
We choose the height $T := Q^{5/4}.$ Let us first consider the error term. We divide the sum over $m$ and $n$ into two ranges. The first case is when $|m - n| > \frac 14 n$. In this case, $\left|\log \left(\frac mn\right) \right| \gg 1$ and thus the contribution to the error term in~\eqref{eq:MTwithW3} is
\est{\ll Q^{\frac 74 + 5\varepsilon} \sum_{m, n = 1}^\infty \frac{\tau_4(m) \tau_4(n)}{(mn)^{1 + \varepsilon}} \ll Q^{\frac 74 + 5\varepsilon}. }
For the other range $|m - n| \leq \frac 14 n$ we have 
$$ \left| \log\left( \frac mn\right)\right| \gg \frac{|m - n|}{n}.$$
Thus the contribution from this range to the error term in~\eqref{eq:MTwithW3} is
\est{ \ll Q^{\frac 74 + 5\varepsilon}  \sum_{n = 1}^{\infty} \frac{\tau_4(n)}{n^{1 + \varepsilon}} \sum_{\substack{m \neq n \\ \frac{3}{4}n \leq m \leq \frac{5}{4} n}} \frac{\tau_4(m)}{m^{1 + \varepsilon}} \frac{n}{|m - n|} \ll  Q^{\frac 74 + 5\varepsilon}\sum_{n = 1}^{\infty} \frac{\tau_4(n)}{n^{1 + \varepsilon}} \sum_{j = 1}^{ n} \frac{1}{j} \ll Q^{\frac 74 + 5\varepsilon}. }

Before we move the contour integral, we reinsert the terms $m = n$ to the main term of~\eqref{eq:MTwithW3}. The contribution of this addition is
\est{ &\frac{Q}{2} \sum_{\substack{n = 1 }}^{\infty} \frac{\tau_4^2 (n) }{n} \frac{1}{(2\pi i)^3} \int_{(\varepsilon)}  \int_{\frac 12 + \varepsilon - iT }^{\frac 12 + \varepsilon + iT} \int_{\frac 12 + \varepsilon - iT}^{\frac 12 + \varepsilon + iT}\Wt_3(s_1, s_2 ; z)  \\
&\quad \cdot \frac{\zeta(1 - z) \F(-z, n, 1)}{\zeta(1 + z) \phi(n, 1+ z)} \left(\frac{Q}{n \lotwo}\right)^{-z} \left(\frac{Q^2}{n Q_0^2}\right)^{s_1 + s_2}  \> ds_2 ds_1 dz. }
Let us now show that this contribution is acceptable. We can move the contour integral over $s_1$ and $s_2$ to $\tRe(s_i) = \varepsilon$. We encounter no poles, and the sum over $n$ is absolutely convergent.  The resulting vertical integral is bounded by $Q^{1+3\varepsilon}$, and by \eqref{eqn:boundWt3}, the contribution from horizontal integrals is bounded by 
$$ \frac{Q^{3 + 4\varepsilon}}{T^{1 - \varepsilon}} \ll Q^{\frac 74 + 6\varepsilon}.$$
Hence, apart from an acceptable error,~\eqref{eq:MTwithW3} equals
\begin{align}
\label{eq:Ms+MgMellin3}
\frac{Q}{2} \frac{1}{(2\pi i)^3} \int_{(\varepsilon)}  \int_{1/2 + \varepsilon-iT}^{1/2+ \varepsilon+iT} \int_{1/2 + \varepsilon-iT}^{1/2+ \varepsilon+iT}& \Wt_3(s_1, s_2 ; z) \frac{\zeta(1 - z)}{\zeta(1 + z)} \frac{Q^{2s_1 + 2s_2 - z}}{Q_0^{2(s_1 + s_2 - z)}}  \mathcal{J}(s_1, s_2; z) \>ds_2 \>ds_1 \> dz, 
\end{align}
where 
\begin{align*}
\mathcal{J}(s_1, s_2; z) = \sum_{\substack{m, n = 1}}^{\infty} \frac{\tau_4 (m) \tau_4 (n)}{m^{1/2 + s_1} n^{1/2 + s_2}} \frac{g^z \F(-z, g, MN)}{\phi(gMN, 1+ z)}.
\end{align*}
As in~\cite[Proof of Proposition 9.1]{CL8th}, writing out the Euler product, one can see that
\begin{align*}
\mathcal{J}(s_1, s_2; z) = \zeta(2-z) \frac{\zeta^4\left(\frac{1}{2} + s_1\right)\zeta^4\left(\frac{1}{2} + s_2\right)}{\zeta^4\left(\frac{3}{2} + s_1 - z\right)\zeta^4\left(\frac{3}{2} + s_2 - z\right)} \zeta^{16}(1 + s_1 + s_2 - z) \mathcal{K} (s_1, s_2; z),
\end{align*}
where $\mathcal{K} (s_1, s_2; z) = \prod_p \mathcal{K}_p(s_1, s_2; z)$ is absolutely convergent when
\[
\tRe (s_1) > 0, \tRe (s_2) > 0, \tRe (s_1 + s_2) > \tRe(z) - 1/2
\]
and
\[
\tRe(z) \in (0, 3/2), \text{ and } \tRe z < 1 + \tRe s_i. 
\]


Now we move the lines of integration in~\eqref{eq:Ms+MgMellin3} to  $\tRe(s_1) = \tRe(s_2) = 2\varepsilon$, encountering poles of order four at $s_1 = 1/2$ and $s_2 = 1/2.$ By the Weyl bound (see e.g.~\cite[Theorem 5.12]{Ti}) and the Phragmen-Lindel\"of principle one has, for $\sigma \geq 0$, 
\[
|\zeta(1/2+\sigma+it)| \ll T^{\max\{0, 1/6-\sigma/3\} + \varepsilon},
\]
so that, using also~\eqref{eqn:boundWt3}, the horizontal integrals contribute
\begin{align*}
& Q^{1 + \varepsilon} \max_{2\varepsilon \leq \sigma \leq 1/2 + \varepsilon} \frac{T^{8\max\{0, 1/6-\sigma/3\}+8\varepsilon} Q^{4\sigma}}{T^{1-\varepsilon}} \\
&= Q^{1 + \varepsilon} \frac{Q^{4(1/2 + \varepsilon)}T^{9\varepsilon}}{T} + Q^{1 + \varepsilon} T^{8\varepsilon} \max_{2\varepsilon \leq \sigma \leq 1/2} T^{1/3} \left(\frac{Q}{T^{2/3}}\right)^{4\sigma} \\
&= O(Q^{7/4+ 20\varepsilon}).
\end{align*}

Furthermore, using again~\eqref{eqn:boundWt3}, the integrals over $\tRe s_i = 2\varepsilon$ contribute to~\eqref{eq:Ms+MgMellin3}
\[
\begin{split}
&\ll \max_{1 \leq T' \leq T} Q^{1+ \varepsilon} \frac{1}{T'^{1-\varepsilon}}\int_{-T'}^{T'} |\zeta(1/2+2/\log Q+it)|^8 dt. 
\end{split}
\]
By Lemma~\ref{le:zeta8th} this is 
\[
\ll Q^{1+\varepsilon} \max_{1 \leq T' \leq T} \frac{1}{T'^{1-\varepsilon}} T'^{3/2+\varepsilon} \ll Q^{1+4\varepsilon} T^{1/2} \ll Q^{7/4}.
\]
Finally by the definition of $\widetilde{\mathcal W}_3(s_1, s_2; z)$ in \eqref{eqn:mellinthree}, the main contribution of the residues is
\begin{align*} 
&\frac{Q}{2} \frac{1}{2\pi i} \int_{(\epsilon)} \frac{\psit(3 - z)}{ \pi^{2 - 2z}}\left( \int_{-\infty}^{\infty} \Hc \left(\frac{z}{2} - it, z\right) G\left(\h + \frac{1 - z}{2}, t \right) \> dt\right) \frac{\zeta(1 - z)}{\zeta(1 + z)} {\zeta(2-z) \mathcal{K}\left(\frac{1}{2}, \frac{1}{2}; z\right)} \\
& \times  \left({\rm Res}_{s_1 = s_2 = 1/2} \frac{\zeta^4\left(\frac{1}{2} + s_1\right)\zeta^4\left(\frac{1}{2} + s_2\right) \zeta^{16}(1 + s_1 + s_2 - z)}{(s_1 + s_2 -z)\zeta^4\left(\tfrac 32 + s_1 - z\right)\zeta^4\left( \tfrac 32 + s_2 - z \right)} \frac{Q^{2s_1 + 2s_2 - z}}{Q_0^{2(s_1 + s_2 - z)}}\right) \> dz. \nonumber
\end{align*}
Then we follow the residue calculation of Proposition 9.1 in \, \cite{CL8th} and obtain the main term.

\end{proof}

Now we have handled unconditionally all the terms involved and Theorem \ref{thm:eightmoment} follows as in~\cite[Section 10]{CL8th}.

\section{Acknowledgements}
The first and second authors acknowledge support from a Simons Travel Grant for Mathematicians. The first author is also supported by NSF grant DMS-2101806. The third author was supported by Academy of Finland grant no. 285894. The fourth author acknowledges support of NSF grant DMS-1902063.

This work was initiated while the first, third and fourth authors were in residence at MSRI in Spring 2017, which was supported by NSF grant DMS-1440140.

\bibliography{EightMoment}
\bibliographystyle{plain}

\end{document}